\numberwithin{equation}{section}
\definecolor{bananamania}{rgb}{0.98, 0.91, 0.71}
\DeclareRobustCommand\bigop[1]{%
  \mathop{\vphantom{\sum}\mathpalette\bigop@{#1}}\slimits@
}
\newcommand{\bigop@}[2]{%
  \vcenter{%
    \sbox\z@{$#1\sum$}%
    \hbox{\resizebox{\ifx#1\displaystyle.9\fi\dimexpr\ht\z@+\dp\z@}{!}{$\m@th#2$}}%
  }%
}
\newenvironment{myproof}[2]{\paragraph{\textit{Proof of {#1} }{#2}.}}{\hfill$\square$}
\theoremstyle{plain}
\newtheorem{theorem}{Theorem}[section]
\newtheorem{lemma}[theorem]{Lemma}
\newtheorem{corollary}[theorem]{Corollary}
\newtheorem{proposition}[theorem]{Proposition}
\theoremstyle{definition}
\newtheorem{definition}[theorem]{Definition}
\newtheorem{remark}[theorem]{Remark}
\def\CC{\mathbb{C}}
\def\NN{\mathbb{N}}
\def\ZZ{\mathbb{Z}}
\def\PP{\mathbb{P}}
\def\ds{\displaystyle}
\def\a{\alpha}
\def\b{\beta}
\def\g{\gamma}
\def\s{\sigma}
\def\G{\Gamma}
\def\w{\omega}
\def\llll{\lambda}
\def\ra{\rightarrow}
\def\O{\Omega}
\def\LL{\mathcal{L}}
\def\BB{\mathcal{B}}
\def\GG{\mathcal{G}}
\def\FF{\mathcal{F}}
\def\PPc{\mathcal{P}}
\def\MM{\overline{\mathcal{M}}}
\def\OO{\mathcal{O}}
\def\gg{\mathfrak{g}}
\def\hh{\mathfrak{h}}
\def\ol{\overline}
\def\scl{\OO}
\def\ascl{\OO}
\def\localb{\OO}
\def\Q{Q^{\vee}}
\newcommand{\fibb}[1]{P_{#1}(G/B^+)}
\newcommand{\fibc}[1]{P_{#1}(G/P)}
\newcommand{\fib}[1]{P_{#1}(G/P)}
\def\mr{\mathring}
\def\ct{\CC^{\times}_t}
\def\st{S^1_t}
\def\cm{\CC^{\times}_{\mu}}
\def\cw{\CC^{\times}_{w(\lambda)}}
\def\wl{wt_{\lambda}}
\def\wll{w(\lambda)}
\newcommand{\fm}[2]{F^{attr}_{#1}(#2)}
\def\Gw{\G_{wt_{\lambda}}}
\newcommand{\Gt}[1]{G\times_{#1}}
\def\MMw{\MM(\wl,\b)}
\def\MMm{\MM(\mu,\b)}
\def\ec{\Lambda}
\def\rep{R(T)}
\def\agp{A_{G/P}}
\def\vp{\varphi}
\def\XXX{\mathcal{X}}
\def\YYY{\mathcal{Y}}
\def\aaa{\gamma}
\def\AA{\mathbb{A}^1}
\def\dfib{P_{\mu_1,\mu_2}}
\def\afib{P_{\mu_1}}
\def\bfib{P_{\mu_2}}
\def\ifib{P_{\mu_i}}
\def\abc{\beta}
\def\AMM{\mathfrak{M}}
\def\ac{a}
\def\AMMo{\mathfrak{M}^{\leqslant 1}_{0,0}}
\def\DDD{\mathcal{D}}
\def\CCC{\mathfrak{C}}
\DeclareMathOperator{\lie}{Lie}
\DeclareMathOperator{\fof}{Frac}
\DeclareMathOperator{\pr}{pr} 
\DeclareMathOperator{\pgw}{PKGW}
\DeclareMathOperator{\gw}{KGW}
\DeclareMathOperator{\ev}{ev}
\DeclareMathOperator{\id}{id}
\DeclareMathOperator{\aut}{Aut}
\DeclareMathOperator{\spann}{Span}
\DeclareMathOperator{\Ad}{Ad}
\DeclareMathOperator{\spec}{Spec}
\DeclareMathOperator{\sym}{Sym}
\DeclareMathOperator{\pic}{Pic}
\newcommand*\bulletsmall{\mathpalette\bulletsmall@{.5}}
\newcommand*\bulletsmall@[2]{\mathbin{\vcenter{\hbox{\scalebox{#2}{$\m@th#1\bullet$}}}}}
\begin{document}
\title[Quantum $K$-theory of $G/P$ and $K$-homology of affine Grassmannian]{Quantum $K$-theory of $G/P$ and $K$-homology of affine Grassmannian}

\author{Chi Hong Chow}

\author{Naichung Conan Leung}

\begin{abstract} 
This paper is the $K$-theoretic analogue of a recent new proof, given by the first named author, of Peterson-Lam-Shimozono's theorem via Savelyev's generalization of Seidel representations. The outcome is a new proof of Lam-Li-Mihalcea-Shimozono's conjecture, including its extension to the parabolic case, which was first verified by Kato.
\end{abstract}



\maketitle

\section{introduction}\label{1}
Let $G$ be a simple and simply connected complex Lie group. In Schubert calculus, an unpublished result of Peterson \cite{Peter}, first proved by Lam-Shimozono \cite{LS}, states that the Pontryagin product of the homology of the affine Grassmannian $Gr_G$ determines completely, via an explicit ring homomorphism defined in terms of the (affine) Schubert bases, the quantum cup product of the quantum cohomology of any flag variety $G/P$. Recently, Chow \cite{me} has given a new proof of this result by computing Savelyev's parametrized version \cite{S_QCC} of Seidel representations \cite{Seidel}.

In $K$-theory, Lam-Li-Mihalcea-Shimozono \cite{LLMS} conjectured a similar homomorphism for the case of $G/B$ where the bases are replaced by the structure sheaves of the (affine) Schubert varieties. Their conjecture was first proved by Kato \cite{Kato1, Kato2}. Kato also extended the result to the general parabolic case \cite{Kato3}. In this paper, we give an alternative proof of Kato's result by following the approach in \cite{me}.
\begin{theorem} \label{main} The map
\[
\begin{array}{ccccc}
\Phi&:&K^T(Gr_G)&\ra &QK_T(G/P)[\ec^{-1}]\\ [0.5em]
& &  \ascl_{\wl} & \mapsto & q^{\llll+\Q_P}\scl_{\widetilde{w}}
\end{array}
\]
is an $\rep$-algebra homomorphism, where $\widetilde{w}$ is the minimal length coset representative of $wW_P$.
\end{theorem}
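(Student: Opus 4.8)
\emph{Proof strategy.}
The plan is to transcribe into $K$-theory the argument of \cite{me}, replacing everywhere (equivariant) cohomology and Gromov--Witten invariants by (equivariant) $K$-theory and $K$-theoretic Gromov--Witten invariants, i.e.\ by pushforwards of virtual structure sheaves from the Kontsevich spaces $\MM_{0,n}(G/P,d)$. Since the affine Schubert classes $\ascl_{\wl}$ form an $\rep$-basis of $K^T(Gr_G)$, the displayed formula already defines an $\rep$-linear map $\Phi$, so what must be shown is that $\Phi$ is unital and multiplicative. To this end I would first construct, independently of the formula, a geometric map $\widetilde\Phi\colon K^T(Gr_G)\to QK_T(G/P)[\ec^{-1}]$ using the $K$-theoretic analogue of Savelyev's parametrized Seidel representation \cite{S_QCC,Seidel}: to a based loop $\gamma$ in $G$ one attaches the $G/P$-bundle $\pi_\gamma\colon E_\gamma\to\PP^1$ obtained by clutching two copies of the trivial bundle along $\gamma$, carries this out in a family over $Gr_G$, and lets $\widetilde\Phi$ be the pushforward along the evaluation $\ev_\infty$ at a fixed fibre of the virtual structure sheaves of the moduli spaces of stable sections of $\pi_\gamma$, with vertical degree recorded in the Novikov variable $q$. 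Well-definedness rests on properness of these section spaces (inherited from $\MM_{0,n}(G/P,d)$, since $E_\gamma$ is a Zariski-locally trivial $G/P$-bundle over $\PP^1$) and on finiteness, after inverting $\ec$, of the resulting sums over vertical degrees; the localization at $\ec$ is forced precisely because the affine Schubert classes have unbounded length and hence contribute sections of unbounded vertical degree.

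The multiplicativity of $\widetilde\Phi$ is the heart of the matter, and its proof follows the group-theoretic mechanism of \cite{me}. The Pontryagin product on $K^T(Gr_G)$ is induced by concatenation of based loops $\gamma_1,\gamma_2\mapsto\gamma_1\ast\gamma_2$, and $E_{\gamma_1\ast\gamma_2}\to\PP^1$ is canonically the curve obtained from $E_{\gamma_1}$ and $E_{\gamma_2}$ by gluing them along a common $G/P$-fibre --- equivalently, it is the special fibre of a degeneration of the trivial $\PP^1$-family into a two-component nodal base $\PP^1\cup_{\mathrm{pt}}\PP^1$. Combining deformation invariance of the virtual structure sheaf, a $K$-theoretic gluing (splitting) formula for moduli of stable sections over the nodal base, and a comparison with the definition of the quantum $K$-product $\star$ of $G/P$ as a pushforward from the Kontsevich spaces $\MM_{0,n}(G/P,d)$ (together with the associated quantum $K$-metric) then yields $\widetilde\Phi(\gamma_1\ast\gamma_2)=\widetilde\Phi(\gamma_1)\star\widetilde\Phi(\gamma_2)$; the constant loop contributes the identity section, so $\widetilde\Phi(1)=1$. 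The genuine obstacle lies in the $K$-theoretic gluing input: in $K$-theory there is no dimension count to suppress the boundary strata of the section spaces that do not come from the distinguished node, so one must instead argue --- as for $K$-theoretic Seidel representations --- with the explicit structure of the moduli problem over a $\PP^1$-family (vanishing of higher direct images along rational tails, rigidity of the minimal section, properness of the forgetful morphisms) to see that the splitting axiom applies with no correction terms. Establishing this gluing statement in the generality needed is the main technical work.

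It remains to identify $\widetilde\Phi$ with $\Phi$, i.e.\ to compute $\widetilde\Phi(\ascl_{\wl})=q^{\lambda+\Q_P}\scl_{\widetilde w}$. Here I would exploit the residual $\CC^\times$-action rotating $\PP^1$: any fibred automorphism of $E_\gamma$ covering this action fixes the fibres over $0$ and $\infty$, and for a loop $\gamma$ representing $\ascl_{\wl}$ a fixed-point analysis of the section spaces --- a $K$-theoretic counterpart of the localization computations of classical Seidel elements --- shows that the pushforward along $\ev_\infty$ collapses onto the contribution of the section of minimal vertical degree. That minimal degree equals $\lambda+\Q_P$, regarded as a class in $H_2(G/P;\ZZ)=\Q/\Q_P$, the summand $\Q_P$ being the standard correction incurred in passing from the full flag variety to $G/P$; and under $\ev_\infty$ the minimal section sweeps out precisely the Schubert variety $X_{\widetilde w}\subset G/P$ of the minimal length representative $\widetilde w$ of $wW_P$, with structure sheaf $\scl_{\widetilde w}$. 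Hence $\widetilde\Phi(\ascl_{\wl})=q^{\lambda+\Q_P}\scl_{\widetilde w}$, and together with multiplicativity this shows that the combinatorially defined $\Phi$ coincides with $\widetilde\Phi$ and is therefore an $\rep$-algebra homomorphism.
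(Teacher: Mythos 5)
Your strategy correctly identifies the two main stages — a geometrically defined Savelyev--Seidel map via section spaces of $G/P$-fibrations, and a ring homomorphism property via degeneration of the base over a nodal curve, followed by a $\CC^\times$-localization computation — and this is indeed the skeleton of the paper's argument. But there is a genuine gap in how you treat the distinctly $K$-theoretic phenomena, and it affects both stages.

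First, the definition of the map. You propose $\widetilde\Phi$ simply as a $q$-weighted pushforward of virtual structure sheaves of section moduli. In the paper the definition carries an essential correction by the operator $(\id+\agp)^{-1}$, where $\agp=\sum_{\beta\ne 0}q^\beta(\ev_2)_*(\ev_1)^*$ is the two-pointed $K$-theoretic GW deformation of the pairing. Without it, the pushforward cannot be a ring homomorphism with respect to Givental's $\star$, because $\star$ itself is built using the deformed pairing. Your remark about ``the associated quantum $K$-metric'' is a gesture toward this, but the actual construction of $\Phi$ must incorporate $(\id+\agp)^{-1}$ explicitly.

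Second, the gluing step. You expect to argue that ``the splitting axiom applies with no correction terms'' after ruling out extraneous boundary strata. This is not how the $K$-theoretic degeneration works and, as stated, the claim is false. When the base $\PP^1$ degenerates to $\PP^1\cup_{\mathrm{pt}}\PP^1$, the section moduli over the nodal base includes configurations with chains of rational curves inserted in the fiber over the node, and in $K$-theory these strata contribute nontrivially. What actually holds (Proposition~\ref{appmain} in the paper) is a degeneration formula in which the gluing insertions are precisely $(\id+\agp)^{-1}(e_i)$ rather than $e_i$, and it is the cancellation between this $\agp$ and the $(\id+\agp)^{-1}$ built into the definition of $\Phi$ that makes the ring homomorphism property come out.

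Third, the computation of $\Phi(\ascl_{\wl})$. You claim a $\CC^\times$-fixed-point analysis shows the answer ``collapses onto the contribution of the section of minimal vertical degree.'' In $K$-theory there is no dimension count to discard the higher-degree contributions, and they do not vanish. The paper's mechanism is more subtle: Bia\l{ynicki}--Birula/Oprea localization is used to show $\MMw$ is irreducible, to pin down the attractive fixed component, and — critically — to prove that the total higher-degree contribution $\vp_+$ equals $\agp(\vp_0)$ where $\vp_0$ is the minimal-degree term. Only then does $(\id+\agp)^{-1}(\vp_0+\agp(\vp_0))=\vp_0$ give the clean answer $q^{\lambda+\Q_P}\scl_{\widetilde w}$. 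Your argument omits the identity $\vp_+=\agp(\vp_0)$ entirely, and without the $(\id+\agp)^{-1}$ in the definition this cancellation is unavailable; the pushforward of higher-degree sections is an infinite sum that does not collapse on its own.

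In short: the cohomological skeleton you describe is right, but the passage to $K$-theory is not a straightforward transcription. The correction operator $\agp$ must appear in the definition of $\Phi$, it inevitably appears in the degeneration formula, and the computation of $\Phi(\ascl_{\wl})$ succeeds precisely because these two occurrences cancel. Filling this in is the central work of the paper and is not addressed by your proposal.
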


As already pointed out by Lam-Li-Mihalcea-Shimozono, Theorem \ref{main} implies immediately the finiteness property of the quantum $K$-product $\star$. 
\begin{corollary} \label{introcor1} For any $v_1, v_2\in W^P$, 
\[ \scl_{v_1}\star \scl_{v_2} \in K_T(G/P)\otimes \ZZ[\ec] .\]
\end{corollary}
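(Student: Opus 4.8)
The plan is to derive Corollary \ref{introcor1} formally from Theorem \ref{main}, in the spirit of the remark of Lam-Li-Mihalcea-Shimozono recalled above: once the Peterson-type homomorphism $\Phi$ is available, essentially all of the geometric content has been used up. The only further inputs are (a) that the Pontryagin product on $K^T(Gr_G)$ has \emph{finite} structure constants in the Schubert basis, which holds because $K^T(Gr_G)$ is a well-defined commutative $\rep$-algebra, and (b) that the semigroup of effective curve classes on $G/P$ is finitely generated and strongly convex, so that $QK_T(G/P) = K_T(G/P)\otimes\ZZ[[\ec]]$ embeds into the associated ring of Novikov Laurent series and
\[ \big(K_T(G/P)\otimes\ZZ[[\ec]]\big)\cap\big(K_T(G/P)\otimes\ZZ[\ec^{\pm 1}]\big) = K_T(G/P)\otimes\ZZ[\ec]. \]

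First, for each $v\in W^P$ I would write $\scl_v$ as a Laurent-monomial multiple of an element of the image of $\Phi$. As $v\in W^P$ we have $\widetilde v = v$, so we may choose a Schubert class $\ascl_{wt_\lambda}$ of $K^T(Gr_G)$ whose index $wt_\lambda$ has finite part $w$ with $\widetilde w = v$; it is a standard fact about the affine Weyl group that such an index exists (e.g.\ one can take $w = v$ and $\lambda$ sufficiently regular, making $wt_\lambda$ a minimal-length coset representative). Theorem \ref{main} then reads
\[ \scl_v = q^{-\lambda-\Q_P}\,\Phi\big(\ascl_{wt_\lambda}\big)\qquad\text{in }QK_T(G/P)[\ec^{-1}]. \]
Fixing such presentations $\scl_{v_i} = q^{-\lambda_i-\Q_P}\,\Phi(\ascl_{w_it_{\lambda_i}})$, $i=1,2$, and using that $\Phi$ is a ring homomorphism, we obtain
\[ \scl_{v_1}\star\scl_{v_2} = q^{-\lambda_1-\lambda_2-2\Q_P}\,\Phi\big(\ascl_{w_1t_{\lambda_1}}\bulletsmall\ascl_{w_2t_{\lambda_2}}\big), \]
where $\bulletsmall$ denotes the Pontryagin product on $K^T(Gr_G)$.

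Next I would expand the Pontryagin product. By (a), $\ascl_{w_1t_{\lambda_1}}\bulletsmall\ascl_{w_2t_{\lambda_2}} = \sum_j c_j\,\ascl_{u_jt_{\mu_j}}$ is a \emph{finite} $\rep$-linear combination of Schubert classes, so applying $\Phi$ and Theorem \ref{main} term by term yields
\[ \scl_{v_1}\star\scl_{v_2} = \sum_j c_j\,q^{\mu_j-\lambda_1-\lambda_2-\Q_P}\,\scl_{\widetilde{u_j}}, \]
a \emph{Laurent polynomial} in the Novikov variables with coefficients in $K_T(G/P)$. On the other hand, directly from the definition of the quantum $K$-product, $\scl_{v_1}\star\scl_{v_2}$ lies in $K_T(G/P)\otimes\ZZ[[\ec]]$, that is, it is a power series supported on the effective cone. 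By (b), an element that is simultaneously such a power series and a Laurent polynomial is a genuine polynomial, so $\scl_{v_1}\star\scl_{v_2}\in K_T(G/P)\otimes\ZZ[\ec]$, as claimed.

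There is no deep obstacle in this argument — which is precisely the point of having isolated Theorem \ref{main}. What requires a little care is purely formal: checking that every exponent that occurs, such as $-\lambda-\Q_P$ and $\mu_j-\lambda_1-\lambda_2-\Q_P$, is a well-defined (possibly negative) class in $H_2(G/P;\ZZ) = \Q/\Q_P$ and hence a genuine Laurent monomial in the Novikov variables of $QK_T(G/P)$; invoking the finiteness in (a) of the Pontryagin structure constants of $K^T(Gr_G)$; and using the combinatorial fact from the first step that for every $v\in W^P$ some basis index $wt_\lambda$ with $\widetilde w = v$ actually appears.
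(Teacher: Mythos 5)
Your argument is correct and is exactly the deduction from Theorem \ref{main} that the paper (following Lam-Li-Mihalcea-Shimozono) takes for granted without spelling out; the paper gives no more detail than the sentence that Theorem \ref{main} ``implies immediately'' the finiteness. Two small points to tighten: in your first step, ``$\lambda$ sufficiently regular'' should read ``$\lambda$ strictly anti-dominant'' (membership of $wt_\lambda$ in $W_{af}^-$ forces $\lambda$ anti-dominant, and taking it regular anti-dominant makes the condition hold for every $w$); and the finiteness of the Pontryagin structure constants is not a purely formal consequence of $K^T(Gr_G)$ being a well-defined $\rep$-algebra, but follows from the combination of (i) its definition as the direct limit $\varinjlim_n K^T(\O_{pol}^{(n)}K)$ of free $\rep$-modules of finite rank, so every element is a \emph{finite} $\rep$-linear combination of $\ascl_{\wl}$'s, and (ii) the Lemma following Lemma \ref{2cbasislemma}, which is what guarantees that the Pontryagin product of two classes in $K^T(Gr_G)$ lands again in $K^T(Gr_G)$ rather than merely in $\bigoplus_{\mu\in\Q}\fof(\rep)\langle\localb_{\mu}\rangle$.
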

\noindent Corollary \ref{introcor1} is not as obvious as the case of quantum cohomology because the moduli spaces of stable maps of arbitrary dimension contribute. What's more, as proved by Givental \cite{Givental}, in order for $\star$ to be associative, it is necessary to introduce a deformation of the Poincar\'e pairing $\chi_{G/P}(-\otimes -)$ by two-pointed $K$-theoretic GW invariants which is in general an infinite sum. Thus, the finiteness must follow from a non-trivial cancellation of terms. This issue has already been settled by Kato \cite{Kato1, Kato3, Kato2} and Anderson-Chen-Tseng \cite{ACT2}. See also the earlier work \cite{ACT1} of the authors of \cite{ACT2} and the work of Buch-Chaput-Mihalcea-Perrin \cite{BCMP1, BCMP2}.

\section*{Outline of the proof}
Our approach is to define a map by Gromov-Witten theory and show that it is an $\rep$-algebra homomorphism and has the desired form.

We first recall the proof of Peterson's original result given in \cite{me}. Take $Gr_G$ to be Pressley-Segal's model \cite{Segal}. In \textit{loc. cit.}, they constructed, for any holomorphic map $f:\G\ra Gr_G$, a holomorphic principal $G$-bundle $P_f$ over $\PP^1\times\G$ with a trivialization over $(\PP^1\setminus\{|z|\leqslant 1\})\times\G$. In particular, we obtain a $G/P$-fibration $\fib{f}$ over $\PP^1\times\G$ by reduction of fibers. One should think of $\fib{f}$ as a holomorphic family of $G/P$-fibrations over $\PP^1$ parametrized by $\G$. For any section class $\b$, define
\begin{center}
$\MM(f,\b):=~$moduli stack of holomorphic sections in $\fib{f}$ representing $\b$
\end{center}
and 
\[\ev:\MM(f,\b)\ra G/P\]
to be the evaluation map at $\infty\in\PP^1$. Thanks to the above trivialization, $\ev$ is well-defined.

For our purpose, we consider two classes of $f$: the $T$-fixed points of $Gr_G$ and Bott-Samelson resolutions of the affine Schubert varieties. They give rise to the localization basis $\{\eta_{\mu}\}_{\mu\in\Q}$ and the affine Schubert basis $\{\xi_{\wl}\}_{\wl\in W_{af}^-}$ of $H^T(Gr_G)$ respectively. Denote by $\MMm$ and $\MM(\wl,\b)$ the corresponding moduli stacks defined above. Define the \textit{Savelyev-Seidel homomorphism} \cite{S_QCC, Seidel}
\[ \Phi_{QH}: H^T(Gr_G)\ra QH_T(G/P)[\ec^{-1}]\]
either by 
\begin{equation}\label{introdef1}
\Phi_{QH}(\eta_{\mu}):= \sum_{\b}q^{\b} \ev_*[\MMm]^{vir}
\end{equation}
or 
\begin{equation}\label{introdef2}
\Phi_{QH}(\xi_{\wl}):= \sum_{\b}q^{\b} \ev_*[\MM(\wl,\b)]^{vir}.
\end{equation}
By the virtual localization formula \cite{GP}, these two definitions are equivalent. \eqref{introdef1} is used when we show that $\Phi_{QH}$ is a ring homomorphism and \eqref{introdef2} is used for the computation. The former follows from a degeneration argument and the latter relies on the following key observation:
\begin{center}
$\MMw$ is smooth and of expected dimension.
\end{center}
Since $\ev$ is $B$-equivariant, $\ev_*[\MMw]$ is equal to a multiple of a Schubert class or zero depending on whether the generic fiber of $\ev$ has zero or positive dimension. This reduces our computation to a purely combinatorial problem which can be solved in a straightforward way.

Back to the situation in the present paper, we will prove Theorem \ref{main} by adapting the above approach to the $K$-theoretic settings. By recalling the definition of the quantum $K$-product, one expects that the $K$-theoretic Savelyev-Seidel homomorphism should be defined by incorporating a new feature that the contribution of each $\MMm$ or $\MMw$ be corrected by the two-pointed $K$-theoretic GW invariants (see the paragraph following Corollary \ref{introcor1}). Define 
\[\agp:= \sum_{\b\ne 0}q^{\b}(\ev^{\b}_2)_*(\ev^{\b}_1)^*\]
where $\ev^{\b}_1, \ev^{\b}_2$ are the evaluation maps on $\MM_{0,2}(G/P,\b)$. The aforementioned correction is given by $(\id+\agp)^{-1}$. Therefore, we define
\[\Phi_{QK} : K^T(Gr_G)\ra QK_T(G/P)[\ec^{-1}]\]
by 
\[ \Phi_{QK}(\ascl_{\wl}):= (\id+\agp)^{-1}\left(\sum_{\b}q^{\b}\ev_*[\OO_{\MMw}]\right).\]
where $\ascl_{\wl}$ is the $K$-theoretic analogue of $\xi_{\wl}$. That $\Phi_{QK}$ is a ring homomorphism follows from similar localization and degeneration arguments as well as an argument used by Givental \cite{Givental} and Lee \cite{Lee} in their proof of the $K$-theoretic WDVV equation. 

The heart of the paper is the computation of $\Phi_{QK}(\ascl_{\wl})$. Our strategy is to introduce a $\CC^{\times}$-action on $\MMw$ by rescaling the domain of free loops in $G$, and apply Oprea's stacky version \cite{Oprea} of Bia\l{ynicki}-Birula's theorem \cite{BB} to this action. We show that if $\MMw\ne\emptyset$, there exists a unique component of $\MMw^{\CC^{\times}}$ whose Bia\l{ynicki}-Birula cell is open, and hence 
\begin{center}
$\MMw$ is either empty or irreducible.
\end{center}
A more in-depth analysis of this component gives
\[\{\b|~\MMw\ne \emptyset\} = [\llll]+\ec\]
and 
\begin{equation}\label{introeq}
\ev_*[\OO_{\MMw}] = (\ev^{\b-[\llll]}_2)_*(\ev^{\b-[\llll]}_1)^*\ev_*[\OO_{\MM(\wl,[\llll])}],\quad\b\in [\llll]+(\ec\setminus\{0\})
\end{equation}
where we put $[\llll]:=\llll+\Q_P$ for simplicity, $\ec$ is the semigroup of effective curve classes and $\ev^{\b-[\llll]}_1, \ev^{\b-[\llll]}_2:\MM_{0,2}(G/P,\b-[\llll])\ra G/P$ are the evaluation maps. Summing up \eqref{introeq} over all $\b$, weighted by $q^{\b}$, we get
\begin{align*}
\Phi_{QK}(\ascl_{\wl}) & = (\id +\agp)^{-1}\circ (\id +\agp)\left(q^{[\llll]} \ev_*[\OO_{\MM(\wl,[\llll])}]\right)\\
& = q^{[\llll]} \ev_*[\OO_{\MM(\wl,[\llll])}].
\end{align*}
The last term can easily be shown to be equal to $q^{[\llll]}\scl_{\widetilde{w}}$ as stated in Theorem \ref{main}.



\section{Preliminaries}\label{2}
\subsection{Some Lie-theoretic notations} \label{2aa}
Let $G$ be a simple and simply connected complex Lie group and $T\subset G$ a maximal torus. Put $\gg:=\lie(G)$ and $\hh:=\lie(T)$. We have the root space decomposition
\[\gg=\hh\oplus\bigoplus_{\a\in R}\gg_{\a}\]
where $R$ is the set of roots associated to the pair $(\gg,\hh)$ and each $\gg_{\a}$ is a one-dimensional eigenspace with respect to the adjoint action of $\hh$. Denote by $W$ the Weyl group. Fix a set $\{\a_1,\ldots,\a_r\}$ of simple roots of $R$. Denote by $\a_i^{\vee}$ the corresponding coroots. Define $R^+\subset R$ to be the set of positive roots spanned by $\a_1,\ldots,\a_r$. Define $B^-$ (resp. $B^+$) to be the Borel subgroup of $G$ containing $T$ with Lie algebra equal to $\hh\oplus\bigoplus_{\a\in -R^+}\gg_{\a}$ (resp. $\hh\oplus\bigoplus_{\a\in R^+}\gg_{\a}$).

Define the affine Weyl group $W_{af}:=W\ltimes\Q$ where $\Q\subset\hh$ is the $\ZZ$-span of $\a^{\vee}_i$, $i=1,\ldots,r$. Typical elements of $W_{af}$ are denoted by $\wl$. Define the affine simple roots $\widetilde{\a}_0,\ldots,\widetilde{\a}_r$ by $\widetilde{\a}_0:=-\a_0+1$ and $\widetilde{\a}_i:=\a_i$ for $i=1,\ldots, r$, where $\a_0$ is the highest positive root.
\subsection{Algebraic K-theory} \label{2ab}
A good reference for the following materials is \cite{Ginzburg}. 

Let $Y$ be a (finite-dimensional) scheme over $\CC$ with an action of a complex torus $T$ (which will be the maximal torus fixed in Section \ref{2aa} throughout). Define $K_T(Y)$ (resp. $K^T(Y)$) to be the Grothendieck group of $T$-equivariant vector bundles (resp. $T$-equivariant coherent sheaves) on $Y$. If $Y$ is smooth and quasi-projective, they are known to be isomorphic.

Pullback and tensor product of vector bundles give rise to the pullback operator and a ring structure on $K_T(Y)$ respectively. Tensor product also defines a $K_T(Y)$-module structure on $K^T(Y)$, since for any vector bundle $E$, $E\otimes -$ is an exact functor on the abelian category of coherent sheaves. If $f:Y\ra Z$ is a $T$-equivariant proper morphism, we define the pushforward operator
\[ f_*: K^T(Y)\ra K^T(Z)\]
by
\[ f_*([\mathcal{E}]):=\sum_{i\geqslant 0} (-1)^i [R^if_*(\mathcal{E})].\]
In particular, if $Y$ is proper and $Z$ is a point, the corresponding operator is denoted by $\chi_Y$. 

There is a natural $K_T(pt)$-module structure on $K_T(Y)$ and $K^T(Y)$, defined via the pullback operator associated to the structure morphism $Y\ra pt$. It is well-known that $K_T(pt)$ is isomorphic to the representation ring $\rep$ of $T$. We will adopt the latter notation throughout the paper.

\begin{remark} \label{2abrmk}
In this paper, we have to work with Deligne-Mumford stacks because moduli spaces of stable maps are not schemes in general. While the above definitions extend to Deligne-Mumford stacks, they are not strictly necessary for the computational aspect of this paper. We will bypass them by following the approach explained in \cite[Remark 5]{Lee}. 

Let $\YYY$ be a Deligne-Mumford stack arising from the moduli of stable maps to a smooth projective variety. Consider the canonical map
\[ p:\YYY\ra Y\]
from $\YYY$ to its coarse moduli $Y$. By the tameness property of $\YYY$ (see \cite{AV}), we have
\begin{equation}\label{2ab3}
p_*[\OO_{\YYY}]=[\OO_Y].
\end{equation}
The quantum $K$-invariants considered in this paper are of the form
\[\chi_{\YYY}(\OO^{vir}_{\YYY}\otimes\ev_1^*\a_1\otimes\cdots\otimes\ev_k^*\a_k)\]
where
\begin{itemize}
\item $\OO^{vir}_{\YYY}\in K^T(\YYY)$ is the virtual structure sheaf constructed in \cite{Lee};

\item $\ev_i$ are the evaluation maps on $\YYY$; and

\item $\a_i$ are some $K$-theory classes on the target space.
\end{itemize}
The key observation is that each $\ev_i$ factors through $p$, and hence the above invariant is equal to 
\[ \chi_Y(p_*\OO^{vir}_{\YYY}\otimes\ev_1^*\a_1\otimes\cdots\otimes\ev_k^*\a_k),\]
by the projection formula. For our computation, $\YYY$ will be smooth and of expected dimension. It follows that $\OO^{vir}_{\YYY}=[\OO_{\YYY}]$. This allows us to work only with the coarse moduli $Y$, by \eqref{2ab3}.
\end{remark}
\subsection{Flag varieties} \label{2b} A \textit{flag variety} is a homogeneous space $G/P$ where $P$ is any parabolic subgroup containing $B^+$. We have
\[ \lie(P)=\lie(B^+)\oplus\bigoplus_{\a\in -R_P^+}\gg_{\a}\]
where $R_P^+:=R_P\cap R^+$ and $R_P$ is the set of roots of $P$. Denote by $W_P$ the Weyl group of $P$ and by $W^P$ the set of minimal length coset representatives in $W/W_P$. For any $v\in W^P$, put $y_v:=vP\in G/P$. Then $\{y_v\}_{v\in W^P}$ is the set of $T$-fixed points of $G/P$. Define
\[ \scl_v := [\OO_{\ol{B^-\cdot y_v}}]\in K_T(G/P).\]
\begin{lemma}\label{2blemma}
$\{\scl_v\}_{v\in W^P}$ is an $\rep$-basis of $K_T(G/P)$.
\end{lemma}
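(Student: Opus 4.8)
The plan is to establish that $\{\scl_v\}_{v \in W^P}$ is an $\rep$-basis of $K_T(G/P)$ by exhibiting a filtration of $G/P$ by closed $T$-invariant subvarieties whose successive differences are the Schubert cells, and applying the localization-type exact sequences in equivariant $K$-theory. First I would recall the Bruhat-type decomposition: $G/P = \bigsqcup_{v \in W^P} B^- \cdot y_v$, where each cell $B^- \cdot y_v$ is a $T$-invariant affine space $\mathbb{A}^{\ell(v_0) - \ell(v)}$ (using the opposite Borel so that the cells are the opposite Schubert cells, dual to the usual stratification), and the closures $X_v := \ol{B^- \cdot y_v}$ are the Schubert varieties. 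Ordering $W^P$ by a linear refinement of the (opposite) Bruhat order, one gets a chain of closed subschemes whose strata are the cells.

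The key step is then to run the standard argument: for a $T$-invariant open $U \subset Y$ with closed complement $Z$, there is an exact sequence $K^T(Z) \to K^T(Y) \to K^T(U) \to 0$, and the class $[\OO_Z]$ (for $Z$ a subvariety) maps to the generator arising from the structure sheaf. Since an equivariant affine space $\mathbb{A}^n$ with linear $T$-action has $K^T(\mathbb{A}^n) \cong \rep$ generated by $[\OO_{\mathbb{A}^n}]$ (by the equivariant version of homotopy invariance / the Thom isomorphism), an induction on the length of the filtration shows that $K^T(G/P)$ is generated as an $\rep$-module by the classes $[\OO_{X_v}] = \scl_v$. To see that these classes are linearly independent over $\rep$, I would invoke the equivariant localization theorem: restriction to the $T$-fixed points $\{y_v\}$ induces an injection $K_T(G/P) \hookrightarrow \bigoplus_{v} \rep$ after inverting suitable characters, and the matrix $(\scl_v|_{y_u})_{u,v}$ is triangular with respect to Bruhat order with nonzero (indeed invertible after localization, and explicitly computable) diagonal entries $\scl_v|_{y_v}$; triangularity with nonvanishing diagonal forces independence.

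Alternatively, and perhaps more cleanly, I would phrase the independence via a Poincaré-type duality pairing: there is a dual family of classes (the ideal sheaves of boundaries, or opposite Schubert structure sheaves) $\{\psi_u\}$ with $\chi_{G/P}(\scl_v \otimes \psi_u)$ upper- (or lower-) triangular and equal to $1$ on the diagonal, which immediately yields both spanning and independence simultaneously. The existence of such a dual basis in equivariant $K$-theory of $G/P$ is classical (it goes back to work on equivariant $K$-theory of flag varieties).

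The main obstacle I expect is purely bookkeeping rather than conceptual: one must be careful that the decomposition into cells is indeed by affine spaces with \emph{linear} torus actions (so that $K^T$ of each cell is free of rank one over $\rep$), that the closure relations are compatible with a chosen linear order, and that the right-exact localization sequences splice together correctly so the induction goes through; the subtlety of working $T$-equivariantly (as opposed to non-equivariantly, where $G/P$ has a cell decomposition and the result is immediate) is that one cannot simply cite cellularity but must track the $\rep$-module structure at each stage. None of this is deep, but it is where the care is needed; I would most likely just cite the standard references (e.g.\ \cite{Ginzburg}) for the equivariant cellular fibration lemma and state that it applies verbatim here.
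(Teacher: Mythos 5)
Your proposal is correct and takes essentially the same approach as the paper: the paper refers the reader to the proof of Lemma \ref{2cbasislemma}, which in turn cites Kumar's argument based on exactly the two ingredients you invoke (the excision sequence $K^T(X\setminus U)\to K^T(X)\to K^T(U)\to 0$ and the Thom isomorphism $K^T(\CC^r)\cong R(T)$ for linear $T$-actions), applied to the opposite Schubert stratification, together with the localization/triangularity monomorphism into the fixed-point ring for independence. The paper is just more terse, deferring the details to \cite{KumarJEMS} and \cite{Ginzburg}, while you spell them out.
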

\begin{proof}
See the proof of Lemma \ref{2cbasislemma}.
\end{proof}

We recall the \textit{equivariant quantum $K$-theory} of $G/P$ defined by Givental \cite{Givental}. See also the work of Lee \cite{Lee} which deals with general smooth projective varieties. Denote by $\ec\subset\pi_2(G/P)$ the semigroup of effective curve classes. We identify $\ec$ with $\bigoplus_{i=1}^r\ZZ_{\geqslant 0}\langle \a_i^{\vee}\rangle\subset\Q$ via the dual of the composition of three isomorphisms
\begin{equation}\label{2bisom}
\left(\Q/\Q_P\right)^*\xrightarrow{\rho~\mapsto L_{\rho}} \pic(G/P) \xrightarrow{c_1} H^2(G/P)\simeq \pi_2(G/P)^*
\end{equation}
where 
\begin{itemize}
\item $\Q_P:=\spann_{\ZZ}\{\a_i^{\vee}|~\a_i\in R_P\}$;
\item $L_{\rho}:= G\times_P \CC_{-\rho}$; and
\item $\CC_{-\rho}$ is the one-dimensional representation of weight $-\rho$ on which $P$ acts by forgetting the semi-simple and unipotent parts.
\end{itemize}

\noindent We have, as abelian groups, 
\[QK_T(G/P):= K_T(G/P)\otimes \ZZ[[\ec]]\]
where $\ZZ[[\ec]]$ is the formal completion of the group ring $\ZZ[\ec]$.

\begin{remark} The reason for enlarging the standard coefficient ring $\ZZ[\ec]$ for quantum cohomology is to ensure that the ring product we are going to define is well-defined. It turns out that this is unnecessary by Corollary \ref{introcor1}.
\end{remark}

What is non-trivial is the definition of the \textit{quantum $K$-product $\star$} on $QK_T(G/P)$. For any $\b\in\ec$ and $\aaa_1,\ldots,\aaa_k\in K_T(G/P)$, define
\[\gw^{\b}(\aaa_1,\ldots,\aaa_k) := \chi_{\MM_{0,k}(G/P,\b)}(\ev_1^*\aaa_1\otimes\cdots\otimes \ev_k^*\aaa_k)\in \rep\]
where $\MM_{0,k}(G/P,\b)$ is the Deligne-Mumford moduli stack of genus zero $k$-pointed stable maps to $G/P$ representing $\b$. Clearly, we can extend $\gw$ to a linear map $(QK_T(G/P))^{\otimes k}\ra \rep\otimes\ZZ[[\ec]]$ by linearity. Take an $\rep$-basis $\{e_i\}_{i\in I}$ of $K_T(G/P)$ (the Schubert basis, for example) and denote by $\{g^{ij}\}_{i,j\in I}$ the inverse of the matrix $\{\chi_{G/P}(e_i\otimes e_j)\}_{i,j\in I}$. It is well-known that the latter matrix is indeed invertible. Define a linear operator
\[\agp: QK_T(G/P)\ra QK_T(G/P)\]
by
\begin{equation}\nonumber
\agp(\aaa):=\sum_{i,j\in I}\sum_{\b\in\ec\setminus\{0\}}q^{\b}g^{ij}\gw^{\b}(\aaa,e_i) e_j.
\end{equation}
We have, for any $\aaa_1,\aaa_2\in QK_T(G/P)$,
\[\aaa_1\star\aaa_2 :=\sum_{i,j\in I}\sum_{\b\in\ec}q^{\b}g^{ij}\gw^{\b}(\aaa_1,\aaa_2,e_i)(\id +\agp)^{-1}(e_j). \]
By \cite{Givental} or \cite{Lee}, $\star$ defines a ring structure on $QK_T(G/P)$.

\subsection{Affine Grassmannian} \label{2c}
There are many models for the affine Grassmannian $Gr_G$. In this paper, we work with Pressley-Segal's version \cite{Segal}.

Define $H:=L^2(S^1;\gg)$ to be the Hilbert space of $L^2$-functions on $S^1$ with values in $\gg$. We have an orthogonal decomposition $H=H_+\oplus H_-$ where $H_+$ (resp. $H_-$) consists of functions whose negative (resp. non-negative) Fourier coefficients are zero. Let $\pr_{\pm}:H\ra H_{\pm}$ denote the orthogonal projections. Define
\[ Gr(H):=\{W\subset H\text{ closed subspaces}|~\pr_+|_W\text{ is Fredholm and }\pr_-|_W\text{ is Hilbert-Schmidt}\}.\]
It is proved in \cite{Segal} that $Gr(H)$ is a complex Hilbert manifold.

Fix a maximal compact subgroup $K$ of $G$. Define 
\begin{align*}
L_{sm}G & :=\{\text{smooth free loops in }G\}\\
L_{pol}G & :=\{\text{polynomial free loops in }G\}\\
\O_{sm}K & :=\{\text{smooth based loops in }K\}\\
\O_{pol}K & :=\{\text{polynomial based loops in }K\}.
\end{align*}
Then $L_{sm}G$ (resp. $\O_{sm}K$) is an infinite-dimensional complex (resp. real) Fr\'echet Lie group in the $C^{\infty}$-topology. Consider the following action on $H$ by $L_{sm}G$:
\[(\vp\cdot f)(z):= \Ad(\vp(z)) f(z)\quad \vp\in L_{sm}G,~f\in H\]
where $\Ad$ is the adjoint action. This action induces an $L_{sm}G$-action on $Gr(H)$ with respect to which the stabilizer of $H_+\in Gr(H)$ is equal to $L^0_{sm}G$, the subgroup of $L_{sm}G$ consisting of $\vp$ which extend to holomorphic functions defined on the unit disk.
\begin{theorem} \label{2cdiffeo} \cite[Theorem 8.6.2]{Segal} There exists a diffeomorphism 
\[ L_{sm}G\cdot H_+\simeq \O_{sm} K\]
under which the sub-orbit $L_{pol}G\cdot H_+$ corresponds to $\O_{pol}K$. 
\end{theorem}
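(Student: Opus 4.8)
The plan is to exhibit $L_{sm}G\cdot H_+$ as a homogeneous space and then trivialise it by a loop-group Iwasawa decomposition. Since the stabiliser of $H_+\in Gr(H)$ was identified above with $L^0_{sm}G$, the orbit map induces a continuous bijection $L_{sm}G/L^0_{sm}G\to L_{sm}G\cdot H_+$; I will come back to why this is in fact a diffeomorphism onto a submanifold of the Hilbert manifold $Gr(H)$. The core claim is that the factorisation map
\[ \mu\colon \O_{sm}K\times L^0_{sm}G\longrightarrow L_{sm}G,\qquad (u,b)\longmapsto u\cdot b, \]
is a diffeomorphism. Granting this, composing the inclusion $u\mapsto(u,1)$ with $\mu$ and the projection $L_{sm}G\to L_{sm}G/L^0_{sm}G$ yields $\O_{sm}K\simeq L_{sm}G/L^0_{sm}G$, and hence, via the orbit map, $\O_{sm}K\simeq L_{sm}G\cdot H_+$, the diffeomorphism being $u\mapsto u\cdot H_+$.

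\emph{Injectivity of $\mu$.} If $u_1b_1=u_2b_2$ then $u_2^{-1}u_1=b_2b_1^{-1}\in\O_{sm}K\cap L^0_{sm}G$, so it suffices to show this intersection is trivial. Fix a faithful representation $G\hookrightarrow GL_N(\CC)$ whose image is self-adjoint and satisfies $K=G\cap U(N)$. An element $\vp\in\O_{sm}K\cap L^0_{sm}G$ extends holomorphically to the closed unit disk with boundary values in $U(N)$; since $\vp(z)^{-1}=\vp(z)^{\ast}$ on $|z|=1$, Schwarz reflection glues this extension to a holomorphic map $\PP^1\to GL_N(\CC)$. Composing with the $N^2$ matrix-entry functions shows that each entry is a global holomorphic function on $\PP^1$, hence constant; so $\vp$ is constant, and $\vp(1)=1$ forces $\vp\equiv 1$.

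\emph{Surjectivity of $\mu$---the main obstacle.} One must prove $L_{sm}G=\O_{sm}K\cdot L^0_{sm}G$, i.e. that every $\gamma\in L_{sm}G$ can be written $\gamma=ub$ with $u\in\O_{sm}K$ and $b$ the boundary value of a holomorphic map $\bar D\to G$. Using the representation above I would reduce first to $G=GL_N(\CC)$: a loop $\gamma$ gives $W:=\gamma H_+\in Gr(H)$ with $zW\subset W$, $\bigcap_{k\geq 0} z^kW=0$ and $\overline{\bigcup_{k\geq 0}z^{-k}W}=H$, and the task becomes the construction of $u\in\O_{sm}U(N)$ with $uH_+=W$, for then $u^{-1}\gamma$ preserves $H_+$ and therefore lies in $L^0_{sm}GL_N$. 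Such a $u$ is produced from an orthonormal moving frame of $W\ominus zW$ adapted to the filtration by powers of $z$ (alternatively, via the gradient flow of the energy functional on $\O_{sm}K$), and the regularity assertion that $u,b$ are smooth when $\gamma$ is follows from elliptic regularity for the associated $\bar\partial$-problem, equivalently from smoothness of the pointwise Gram--Schmidt of a smooth loop in $GL_N$. Passing back from $GL_N$ to $G$ amounts to checking that $L_{sm}G\cdot H_+$ consists precisely of those $u\in\O_{sm}U(N)$ taking values in $K$, which one reads off the defining polynomial equations of $G$ together with the analyticity of the holomorphic extension of $u^{-1}\gamma$. This existence step, with its attendant analysis, is the substantive content of \cite[Theorem 8.6.2]{Segal}, and I would import the details from there.

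\emph{The polynomial sub-orbit, and smoothness of the bijection.} If $\gamma\in L_{pol}G$ then $z^NH_+\subset W\subset z^{-N}H_+$ for some $N$: the inclusion $W\subset z^{-N}H_+$ forces $z^Nu$ to have only non-negative Fourier modes, while $z^NH_+\subset W$ forces the same for $z^Nu^{-1}=z^Nu^{\ast}$; hence the already-constructed $u\in\O_{sm}K$ has Fourier modes in $[-N,N]$, i.e. $u\in\O_{pol}K$, and then $b=u^{\ast}\gamma\in L^0_{sm}G\cap L_{pol}G$. Conversely any $u\in\O_{pol}K$ is itself a polynomial loop in $G$, so $uH_+\in L_{pol}G\cdot H_+$; thus $\mu$ restricts to a diffeomorphism $\O_{pol}K\times(L^0_{sm}G\cap L_{pol}G)\simeq L_{pol}G$ and the identification $\O_{sm}K\simeq L_{sm}G\cdot H_+$ carries $L_{pol}G\cdot H_+$ onto $\O_{pol}K$. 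Finally, smoothness of $\mu^{-1}$ and the embedding property of the orbit map follow once $\mu$ is in hand: $\O_{sm}K$ is a global smooth slice for the right translation action of $L^0_{sm}G$ on $L_{sm}G$, so $L_{sm}G\cdot H_+$ is a submanifold of $Gr(H)$ and the orbit map is open onto it. The only genuinely hard point in this scheme is the surjectivity of $\mu$; everything else is soft.
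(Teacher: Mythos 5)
The paper does not prove this statement; it is imported verbatim from Pressley--Segal \cite[Theorem 8.6.2]{Segal}, so there is no ``paper proof'' to compare against. Your sketch is a faithful outline of the argument in that reference: the diffeomorphism is obtained from the loop-group Iwasawa factorisation $L_{sm}G \simeq \O_{sm}K \times L^0_{sm}G$, injectivity comes from a Schwarz-reflection/Liouville argument showing $\O_{sm}K \cap L^0_{sm}G = \{1\}$, surjectivity is the hard existence statement which you correctly identify as the substantive content and defer to the source, and the polynomial sub-orbit is read off from a Fourier-support bound. This is exactly the intended route, and since the statement is itself a citation, deferring the analysis is appropriate.

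One small point worth tightening: in the paper's setup $H = L^2(S^1;\gg)$ and the $L_{sm}G$-action on $Gr(H)$ is via $\Ad$, whereas your polynomial-support argument tacitly switches to the Grassmannian built from a faithful matrix representation $G\hookrightarrow GL_N(\CC)$, where the action is by ordinary multiplication. The conclusion ``$u$ has Fourier modes in $[-N,N]$, hence $u\in\O_{pol}K$'' is correct, but the translation between the two models (which Pressley--Segal do address) should be acknowledged: a priori the bound $z^NH_+\subset W\subset z^{-N}H_+$ in the adjoint model controls the Fourier support of $\Ad(u)$, and one then uses that $u\mapsto\Ad(u)$ has finite kernel and that polynomiality of a loop in $G$ is independent of the chosen faithful representation. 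With that caveat noted, the proposal is sound.
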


From now on, we identify the orbit $L_{sm}G\cdot H_+$  (resp. $L_{pol}G\cdot H_+$) with $\O_{sm} K$ (resp. $\O_{pol} K$) via the above diffeomorphism . Since $G$ is assumed to be simply connected, $\O_{sm}K$ is connected and so lies in the connected component $Gr(H)^o$ of $Gr(H)$ containing $H_+$. For any natural number $n$, define
\[ Gr^{(n)}(H):=\{W\in Gr(H)^o|~z^n H_+\subseteq W\subseteq z^{-n} H_+\}\]
and 
\[\O_{pol}^{(n)}K:=\O_{pol}K\cap Gr^{(n)}(H).\]
Notice that $Gr^{(n)}(H)$ is a submanifold of $Gr(H)^o$ biholomorphic to the type-A Grassmannian $Gr(n\cdot \dim_{\CC}\gg, 2n\cdot \dim_{\CC}\gg)$ and $\O_{pol}^{(n)}K$ is a possibly singular closed subvariety of $Gr^{(n)}(H)$.
\begin{theorem} \label{2cunion}\cite[Theorem 8.3.3]{Segal} $\O_{pol}K=\bigcup_{n=0}^{\infty} \O_{pol}^{(n)}K$. 
\end{theorem}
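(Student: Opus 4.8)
The reverse inclusion $\bigcup_{n=0}^{\infty}\O_{pol}^{(n)}K\subseteq\O_{pol}K$ is immediate from the definition $\O_{pol}^{(n)}K:=\O_{pol}K\cap Gr^{(n)}(H)$, so the plan is to establish the inclusion $\O_{pol}K\subseteq\bigcup_{n=0}^{\infty}\O_{pol}^{(n)}K$. Unwinding the definitions, and using that $\O_{pol}K$ is identified with the orbit $L_{pol}G\cdot H_+$ and that $\O_{pol}K\subseteq\O_{sm}K\subseteq Gr(H)^o$, it is enough to show: for every $\vp\in L_{pol}G$ there is an integer $n\geqslant 0$ with
\[ z^n H_+\subseteq\vp\cdot H_+\subseteq z^{-n}H_+. \]

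The mechanism I would use is that a polynomial loop has bounded ``Fourier bandwidth''. Fixing a faithful finite-dimensional representation $G\hookrightarrow GL(V)$ (for instance the adjoint representation, faithful since $G$ is simple), the group $L_{pol}G$ consists of loops $\vp$ for which both $\vp(z)$ and $\vp(z)^{-1}$ are $\mathrm{End}(V)$-valued Laurent polynomials in $z$; equivalently $L_{pol}G=G(\CC[z,z^{-1}])$, which makes the stability under inversion manifest. It follows that the operator $f(z)\mapsto\Ad(\vp(z))f(z)$ on $H=L^2(S^1;\gg)$ has matrix a Laurent polynomial supported, say, in degrees $[-M,M]$, and that its inverse $f(z)\mapsto\Ad(\vp(z)^{-1})f(z)$ is supported in degrees $[-M',M']$, for some $M,M'\geqslant 0$. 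Then, first, for $f\in H_+$ the function $\Ad(\vp(z))f(z)$ has Fourier modes supported in $[-M,\infty)$, so $\vp\cdot H_+\subseteq z^{-M}H_+$. Second, since $\vp\cdot$ is a linear automorphism of $H$, the inclusion $z^n H_+\subseteq\vp\cdot H_+$ is equivalent to $\vp^{-1}\cdot(z^n H_+)\subseteq\vp^{-1}\cdot(\vp\cdot H_+)=H_+$; but $\vp^{-1}\cdot(z^n H_+)$ has Fourier modes supported in $[n-M',\infty)$, which lies in $H_+$ once $n\geqslant M'$. Taking $n:=\max(M,M')$ yields the required sandwich, so $\vp\cdot H_+\in Gr^{(n)}(H)$.

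I do not expect a genuine obstacle here: the argument is elementary once the polynomial structure of $\vp$ is invoked. The only points needing a little care are recording that $L_{pol}G$ is closed under inversion (so that $\Ad(\vp^{-1})$ is again a Laurent-polynomial-valued matrix) and checking that $\vp\cdot H_+$ indeed lies in $Gr(H)$, and in the connected component $Gr(H)^o$ --- the latter being covered by the chain of orbit inclusions quoted above together with Pressley--Segal's construction of $Gr(H)$. One should also note in passing that the family $Gr^{(n)}(H)$, and hence $\O_{pol}^{(n)}K$, is increasing in $n$, so the displayed union is a genuine exhaustion.
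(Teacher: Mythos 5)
The paper does not supply its own proof of this statement; Theorem~\ref{2cunion} is cited verbatim from Pressley--Segal~\cite[Theorem~8.3.3]{Segal}, so there is no ``paper's proof'' to compare against. That said, your argument is correct and is the standard one: identify $\O_{pol}K$ with the orbit $L_{pol}G\cdot H_+$, observe that the multiplication operator $f\mapsto\Ad(\vp)f$ on $H=L^2(S^1;\gg)$ is a Laurent-polynomial-coefficient operator with bounded bandwidth when $\vp\in L_{pol}G$ (and likewise for $\vp^{-1}$, since $L_{pol}G$ is a group), and deduce the sandwich $z^n H_+\subseteq\vp\cdot H_+\subseteq z^{-n}H_+$ for $n$ large. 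Together with the (already recorded) fact that $\O_{pol}K\subseteq\O_{sm}K\subseteq Gr(H)^o$, this places $\vp\cdot H_+$ in $Gr^{(n)}(H)$, hence in $\O_{pol}^{(n)}K$.

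One small correction: the adjoint representation is \emph{not} faithful for $G$ simple and simply connected in general, since its kernel is the center $Z(G)$ (e.g.\ $Z(SL_n)=\mu_n$). This does not damage the argument --- what you actually use is only that $\Ad\colon G\to GL(\gg)$ is an algebraic morphism, so that $\Ad(\vp(\cdot))$ and $\Ad(\vp(\cdot)^{-1})$ are Laurent polynomials whenever $\vp\in L_{pol}G$; faithfulness is irrelevant, and indeed the entire argument can be phrased directly with $\Ad$ without ever invoking a faithful embedding. If one insists on a faithful $G\hookrightarrow GL(V)$ (to \emph{define} $L_{pol}G$ internally as those $\vp$ with $\vp,\vp^{-1}$ Laurent-polynomial in $\mathrm{End}(V)$, as Pressley--Segal do), one should just appeal to the existence of a faithful finite-dimensional representation of a simply connected semisimple group, rather than to the adjoint.
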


Consider the action on $\O_{pol}K$ by the maximal torus $T\subset G$. It is easy to see that the fixed-point set $(\O_{pol}K)^T$ is equal to $\{x_{\mu}\}_{\mu\in\Q}$ where $x_{\mu}$ is the cocharacter of $T$ associated to any element $\mu\in\Q$. Define
\[\BB^{0,-}_{sm}:=\{\vp\in L^0_{sm}G|~\vp(0)\in B^-\}.\]
(By abuse of notation, the holomorphic extension of any $\vp\in L^0_{sm}G$ is denoted by the same symbol.)
\begin{theorem} \label{2cBruhat} \cite[Theorem 8.6.3]{Segal} 
\begin{enumerate}
\item (Bruhat decomposition) We have
\[ \O_{pol}K=\bigcup_{\mu\in\Q}\BB_{sm}^{0,-}\cdot x_{\mu}.\]
\item For any $\mu\in\Q$, the orbit $\BB_{sm}^{0,-}\cdot x_{\mu}$ is biholomorphic to a complex affine space.
\end{enumerate}
\end{theorem}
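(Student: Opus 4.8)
This is \cite[Theorem 8.6.3]{Segal}, which we use only as a tool, so I will just indicate how I would prove it. The plan has two parts: first I would describe each orbit $\BB^{0,-}_{sm}\cdot x_\mu$ explicitly as the exponential image of a finite-dimensional subspace of the loop algebra, which yields (2); then I would identify these orbits with the cells of a Bia\l{}ynicki--Birula decomposition for the loop-rescaling action — or, alternatively, run a direct Gaussian-elimination argument — which yields (1).

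For the first part, I would let $\gamma_\mu$ denote the cocharacter loop representing $x_\mu$; then $\BB^{0,-}_{sm}\cdot x_\mu$ is identified with $\BB^{0,-}_{sm}$ modulo the stabiliser $\BB^{0,-}_{sm}\cap\gamma_\mu L^0_{sm}G\gamma_\mu^{-1}$ of $x_\mu$, and I would compute this stabiliser on the level of Lie algebras. Expanding loops into Fourier modes, $\lie(\BB^{0,-}_{sm})$ consists of $\lie(B^-)$ in degree $0$ together with all of $\gg$ in every positive degree, whereas $\Ad(\gamma_\mu)$ carries $\gg_\a z^k$ to $\gg_\a z^{k+\langle\a,\mu\rangle}$ and fixes $\hh z^k$; comparing the two, a linear complement of the stabiliser's Lie algebra inside $\lie(\BB^{0,-}_{sm})$ is the finite-dimensional nilpotent subspace
\[ \mathfrak{u}_\mu \ :=\ \bigoplus\big\{\, \gg_\a z^k \ :\ 0\le k\le\langle\a,\mu\rangle-1,\ \text{ and } k\ge 1 \text{ when } \a\in R^+ \,\big\}, \]
and $\xi\mapsto\exp(\xi)\cdot x_\mu$ restricts to a biholomorphism of $\mathfrak{u}_\mu$ onto $\BB^{0,-}_{sm}\cdot x_\mu$. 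This proves (2), and $\dim\BB^{0,-}_{sm}\cdot x_\mu$ can be read off from the display.

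For the second part, I would use the $\CC^\times$-action on $\O_{pol}K$ that rescales the loop parameter, combine it with the $T$-action, and choose a one-parameter subgroup $\rho\colon\CC^\times\to T\times\CC^\times$ whose $T$-component is regular and anti-dominant and whose rotation weight dominates it. One then checks that the fixed locus of $\rho$ on $\O_{pol}K$ is exactly $\{x_\mu\}_{\mu\in\Q}$ — all tangent weights there being nonzero — and that the positive-weight subspace of $T_{x_\mu}\O_{pol}K$ is precisely $\mathfrak{u}_\mu$. Restricting $\rho$ to a finite-dimensional approximation $\O^{(n)}_{pol}K$, which is a projective variety by Theorem \ref{2cunion}, the theorem of Bia\l{}ynicki--Birula exhibits $\O^{(n)}_{pol}K$ as a disjoint union of affine attracting cells indexed by its fixed points; since $\BB^{0,-}_{sm}\cdot x_\mu$ is $\rho$-stable, is contracted to $x_\mu$ by the $\rho$-flow, and has the right tangent space at $x_\mu$, it must be the attracting cell of $x_\mu$. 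The cells are compatible with the closed embeddings $\O^{(n)}_{pol}K\hookrightarrow\O^{(n+1)}_{pol}K$, so letting $n\to\infty$ gives both the disjointness and the exhaustion in (1). Alternatively, one can argue by Gaussian elimination on each $\O^{(n)}_{pol}K$: the $T$-weight pattern of a point pins down $\mu$, and elements of $\BB^{0,-}_{sm}$ then clear it to $x_\mu$.

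The point I expect to be the main obstacle is making the Bia\l{}ynicki--Birula cells equal to the full $\BB^{0,-}_{sm}$-orbits rather than proper pieces of them: one has to choose $\rho$ so that the anti-dominance of its $T$-part accounts for the degree-zero root lines in $\mathfrak{u}_\mu$ while its large rotation weight accounts for the higher Fourier modes, and so that no tangent direction at any $x_\mu$ acquires weight zero. Once that is arranged, the rest is bookkeeping with Fourier degrees — essentially what \cite[\S 8.6]{Segal} carries out by hand — and we use the statement as a black box in the sequel.
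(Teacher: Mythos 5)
The paper does not prove Theorem \ref{2cBruhat}; it is imported verbatim from \cite[Theorem 8.6.3]{Segal} and used as a black box (you already flag this at the start, which is the right call). So there is no ``paper's own proof'' to compare against. That said, your sketch is essentially correct and your Lie-algebra bookkeeping for $\mathfrak{u}_\mu$ is right: the stabiliser of $x_\mu$ in $\BB^{0,-}_{sm}$ is cut out by $k\ge\langle\a,\mu\rangle$, and intersecting the complement with the membership conditions for $\BB^{0,-}_{sm}$ gives exactly the window $0\le k\le\langle\a,\mu\rangle-1$ with $k\ge 1$ forced for $\a\in R^+$; summing dimensions recovers the expected length formula, so part (2) is fine. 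For part (1) your Bia\l{}ynicki--Birula route is a legitimate and well-known alternative, but it is not how Pressley--Segal argue: they work directly in the Grassmannian model of \S 8.3 and produce the stratification by a hands-on factorisation/Gaussian-elimination argument on Fourier modes, which is closer to the ``alternative'' you mention in passing than to the BB route you develop.

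One genuine wrinkle in the BB route as you wrote it: you invoke Bia\l{}ynicki--Birula directly on $\O^{(n)}_{pol}K$, but, as the paper notes, these are possibly singular projective varieties, and the clean statement (cells are affine, tangent space of the cell equals the positive-weight subspace) requires smoothness. The fix is standard and worth saying explicitly: run BB on the smooth ambient $Gr^{(n)}(H)$ (a finite type-$A$ Grassmannian), then observe that $\O^{(n)}_{pol}K$ is a $\rho$-invariant closed subvariety whose fixed points are among the $x_\mu$, so it is a union of attracting sets; the identification of each attracting set with the Iwahori orbit is then done by the tangent-space comparison at $x_\mu$, which you correctly reduce to the weight computation. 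With that adjustment (or by running your Gaussian-elimination alternative, which sidesteps the issue entirely), the sketch goes through. I would also make explicit that $\rho$-stability of $\BB^{0,-}_{sm}\cdot x_\mu$ uses both that loop rotation preserves $\BB^{0,-}_{sm}$ (since $(t\cdot\vp)(0)=\vp(0)$) and that $x_\mu$ is $\rho$-fixed, which you use implicitly.
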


\begin{definition} Define the \textit{affine Grassmannian} $Gr_G:=\O_{pol}K$.
\end{definition}

Now we define, following \cite{KumarJEMS}, the \textit{$K$-homology} $K^T(Gr_G)$ of $Gr_G$. Notice that the definition does not follow directly from Section \ref{2ab} where we only deal with finite-dimensional schemes. By Theorem \ref{2cunion}, $Gr_G$ is the union of the chain of projective varieties
\[ \O_{pol}^{(0)}K\subset \O_{pol}^{(1)}K\subset \O_{pol}^{(2)}K\subset\cdots.\]
This chain induces a direct system of $\rep$-modules
\[ K^T(\O_{pol}^{(0)}K)\ra K^T(\O_{pol}^{(1)}K) \ra K^T(\O_{pol}^{(2)}K)\ra \cdots .\]
\begin{definition} Define
\[ K^T(Gr_G):=\varinjlim_{n} K^T(\O_{pol}^{(n)}K).\]
\end{definition}

Denote by $W_{af}^-$ the set of minimal length coset representatives in $W_{af}/W$. Notice that the map $W_{af}^-\ra \Q$ sending $\wl$ to $\wll$ is bijective.
\begin{definition} $~$
\begin{enumerate}
\item Let $\mu\in \Q$. Define
\[ \localb_{\mu}:= [\OO_{x_{\mu}}]\in K^T(Gr_G).\]
\item Let $\wl\in W_{af}^-$. Define
\[ \ascl_{\wl}:= [\OO_{\ol{\BB^{0,-}_{sm}\cdot x_{\wll}}}] \in K^T(Gr_G)\]
where $\ol{\BB^{0,-}_{sm}\cdot x_{\wll}}$ is the Zariski closure of $\BB^{0,-}_{sm}\cdot x_{\wll}$ taken in $\O_{pol}^{(n)}K$ for some large $n$.
\end{enumerate}
\end{definition}

\begin{lemma}\label{2cbasislemma} $~$
\begin{enumerate}
\item $\{\ascl_{\wl}\}_{\wl\in W_{af}^-}$ is an $\rep$-basis of $K^T(Gr_G)$. 
\item There exists a monomorphism 
\begin{equation}\label{2cmono}
K^T(Gr_G)\hookrightarrow \bigoplus_{\mu\in\Q}\fof(\rep)\langle\localb_{\mu}\rangle
\end{equation}
which fixes every $\localb_{\mu}$. 
\end{enumerate}
\end{lemma}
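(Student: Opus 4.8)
The plan is to prove both assertions at each finite stage $\O_{pol}^{(n)}K$ and then pass to the direct limit $K^T(Gr_G)=\varinjlim_n K^T(\O_{pol}^{(n)}K)$; cf.\ \cite{KumarJEMS}. Fix $n$ and set $Y:=\O_{pol}^{(n)}K$. By the remark after Theorem \ref{2cunion}, $Y$ is a closed subvariety of a type-A Grassmannian, hence projective; and since $Gr^{(n)}(H)$ is cut out of $Gr(H)^o$ by the closed conditions $z^nH_+\subseteq W\subseteq z^{-n}H_+$, the variety $Y$ is also closed in $Gr_G$. By Theorem \ref{2cBruhat}, $Gr_G$ is the disjoint union of the Bruhat cells $\BB_{sm}^{0,-}\cdot x_\mu$, each $T$-equivariantly an affine space with linear $T$-action, hence with unique $T$-fixed point $x_\mu$. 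Being closed, $Y$ is a finite union of Bruhat cells, namely those $\BB_{sm}^{0,-}\cdot x_\mu$ with $x_\mu\in Y$; so $Y^T$ is finite, $Y$ contains the closure of each of its cells, and under the bijection $W_{af}^-\simeq\Q$ the class $\ascl_{\wl}$ equals $[\OO_{\ol{\BB_{sm}^{0,-}\cdot x_{\wll}}}]$ for $x_{\wll}\in Y$. Ordering the cells of $Y$ by dimension yields a filtration of $Y$ by closed $T$-subvarieties whose successive differences are disjoint unions of affine spaces.

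For part (1), I would run the cellular argument in equivariant $K$-homology. Using the localization sequences of the filtration steps, the homotopy invariance $K^T(\mathbb{A}^k)\simeq\rep\cdot[\OO_{\mathbb{A}^k}]$, and the standard fact that $[\OO_{\ol{\BB_{sm}^{0,-}\cdot x_\mu}}]$ lifts the generator of the summand indexed by $\BB_{sm}^{0,-}\cdot x_\mu$, one shows by induction that $K^T(Y)$ is generated as an $\rep$-module by the $N:=|Y^T|$ classes $\ascl_{\wl}$ with $x_{\wll}\in Y$. On the other hand, Thomason's concentration theorem gives $K^T(Y^T)\otimes_\rep\fof(\rep)\xrightarrow{\ \sim\ }K^T(Y)\otimes_\rep\fof(\rep)$, and since $G$-theory is insensitive to nilpotents the left side is $\fof(\rep)^{\oplus N}$; hence $K^T(Y)\otimes_\rep\fof(\rep)$ has dimension $N$, so the kernel of the surjection $\rep^{\oplus N}\twoheadrightarrow K^T(Y)$ vanishes after $\otimes\fof(\rep)$, i.e.\ is torsion inside a free module over the domain $\rep$, hence is zero. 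Thus $K^T(Y)$ is $\rep$-free with basis $\{\ascl_{\wl}:x_{\wll}\in Y\}$. The transition maps of the direct system are the pushforwards along the closed embeddings $\O_{pol}^{(n)}K\hookrightarrow\O_{pol}^{(n+1)}K$, which carry each cell closure to the same cell closure and hence embed the level-$n$ basis into the level-$(n+1)$ basis; since every $\mu\in\Q$ lies in $\O_{pol}^{(n)}K$ for $n$ large, the colimit is $\rep$-free with basis $\{\ascl_{\wl}\}_{\wl\in W_{af}^-}$.

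For part (2), I would use the localization isomorphism in the form of the pushforward $\iota_*$ along $\iota\colon Y^T\hookrightarrow Y$: after inverting it reads $\bigoplus_{x_\mu\in Y}\fof(\rep)\langle\localb_\mu\rangle=K^T(Y^T)\otimes_\rep\fof(\rep)\xrightarrow{\ \sim\ }K^T(Y)\otimes_\rep\fof(\rep)$, and it sends $\localb_\mu$ to $\iota_*[\OO_{x_\mu}]=\localb_\mu$. Composing the inclusion $K^T(Y)\hookrightarrow K^T(Y)\otimes_\rep\fof(\rep)$ — injective by the freeness from part (1) — with the inverse of this isomorphism gives a monomorphism $K^T(Y)\hookrightarrow\bigoplus_{x_\mu\in Y}\fof(\rep)\langle\localb_\mu\rangle$ fixing every $\localb_\mu$. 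Functoriality of proper pushforward (namely $j\circ\iota^{(n)}=\iota^{(n+1)}\circ\kappa$ for the inclusions of $\O_{pol}^{(n)}K$ and of its fixed locus) makes these monomorphisms compatible with the transition maps, the ones on the target side being extension by zero. Passing to the colimit — which is exact, hence monomorphism-preserving, and identifies $\varinjlim_n\bigoplus_{x_\mu\in\O_{pol}^{(n)}K}\fof(\rep)\langle\localb_\mu\rangle$ with $\bigoplus_{\mu\in\Q}\fof(\rep)\langle\localb_\mu\rangle$ — produces the monomorphism \eqref{2cmono}, which fixes every $\localb_\mu$.

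The step I expect to be the main obstacle is the freeness of $K^T(\O_{pol}^{(n)}K)$. It is not a formality, because $\O_{pol}^{(n)}K$ may be singular even at its $T$-fixed points, so coherent sheaves cannot be pulled back to the fixed locus and the usual ``restrict to fixed points'' construction of a cellular basis is unavailable; the workaround above uses the right-exact $K$-homology sequences only to produce a generating set and invokes Thomason's theorem only for the rank count. A subsidiary point requiring care is that $\O_{pol}^{(n)}K$ is a union of whole Bruhat cells and of their closures, which follows from its being closed in $Gr_G$ together with the fact that cell closures are affine Grassmannian Schubert varieties. The remaining ingredients — homotopy invariance, Thomason's theorem, functoriality of proper pushforward, exactness of filtered colimits — are standard. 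The same argument, with $G/P$ and its Schubert cells in place of $Gr_G$, proves Lemma \ref{2blemma}.
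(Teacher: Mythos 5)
Your proof is correct and follows essentially the same cellular strategy as the paper for part (1): filter each finite-dimensional approximation $\O_{pol}^{(n)}K$ by length of Bruhat cells, use the right-exact excision sequence and the Thom isomorphism to produce the $\ascl_{\wl}$ as $\rep$-module generators, and then establish freeness. Where the paper simply cites Kumar's argument as resting on those two facts, you make the freeness step explicit by adjoining Thomason's concentration theorem to get the rank count $N=|Y^T|$ and then ruling out relations by torsion-freeness of $\rep^{\oplus N}$; this is a perfectly valid (and arguably cleaner) way to close the gap, and it is sensible of you to flag that one cannot restrict a coherent class to the singular fixed locus, so some rank bound is genuinely needed.

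For part (2) there is a mild divergence in presentation. The paper constructs the monomorphism via an explicit unitriangularity statement -- each $\ascl_{\wl}$ is an $\fof(\rep)$-combination of $\localb_{\wll}$ and lower-length affine Schubert classes, proved by restricting to the open cell and doing a local Koszul computation in $K^T(\CC^{\ell(\wl)})$ -- which yields injectivity and also records the change-of-basis data used later (e.g. in the proof of Lemma \ref{4bchangebasis}). You instead compose the inclusion $K^T(Y)\hookrightarrow K^T(Y)\otimes_\rep\fof(\rep)$ (injective by freeness) with the inverse of the pushforward localization isomorphism $K^T(Y^T)\otimes\fof(\rep)\xrightarrow{\sim}K^T(Y)\otimes\fof(\rep)$, check it fixes each $\localb_\mu$, and verify compatibility with transition maps before passing to the colimit. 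Both routes are correct; yours is more abstract and avoids the local computation, while the paper's gives the explicit triangular expansion, which is reused downstream. Your treatment of the colimit step and the check that the $\O_{pol}^{(n)}K$ are unions of whole Bruhat cells are both correct and worth having spelled out.
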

\begin{proof}
(1) follows from an argument of Kumar \cite{KumarJEMS} based on the following two standard results:
\begin{enumerate}[(i)]
\item (The excision sequence) If we have $U\xhookrightarrow{i} X \xhookleftarrow{j} X\setminus U$ where $X$ is projective and $U$ is open, then the sequence 
\begin{equation}\label{2cseq}
K^T(X\setminus U)\xrightarrow{j_*} K^T(X)\xrightarrow{i^*} K^T(U)\ra 0
\end{equation}
is exact.
\item (The Thom isomorphism: a special case) If $T$ acts on a vector space $\CC^r$ linearly, then $K^T(\CC^r)$ is freely generated by $[\OO_{\CC^r}]$. 
\end{enumerate}
Proofs of (i) and (ii) can be found in \cite{Ginzburg}. To prove (2), it suffices to show that every $\ascl_{\wl}$ is an $\fof(\rep)$-linear combination of $\localb_{\wll}$ and some other $\ascl_{w't_{\llll'}}$ with $\ell(w't_{\llll'})<\ell(\wl)$. This follows from \eqref{2cseq} and a local computation in $K^T(\CC^{\ell(\wl)})$.
\end{proof}

\begin{definition} Define an $\rep$-algebra structure $\bulletsmall$ on $\bigoplus_{\mu\in\Q}\fof(\rep)\langle\localb_{\mu}\rangle$ by 
\begin{equation}\label{2calg}
\localb_{\mu_1}\bulletsmall \localb_{\mu_2} := \localb_{\mu_1+\mu_2}\quad \mu_1,\mu_2\in \Q.
\end{equation}
\end{definition}

\begin{lemma} Via the monomorphism \eqref{2cmono}, $K^T(Gr_G)$ is a sub-$\rep$-algebra.
\end{lemma}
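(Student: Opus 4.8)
The plan is to show that the image of $K^T(Gr_G)$ under the monomorphism \eqref{2cmono} is closed under the product $\bulletsmall$. Since $\{\ascl_{\wl}\}_{\wl\in W_{af}^-}$ is an $\rep$-basis by Lemma \ref{2cbasislemma}(1), it suffices to prove that $\ascl_{\wl_1}\bulletsmall\ascl_{\wl_2}$ lies in the image for all $\wl_1,\wl_2\in W_{af}^-$. The natural approach is geometric: the Pontryagin product on the $K$-homology of $Gr_G$ is induced by the multiplication map $m:Gr_G\times Gr_G\ra Gr_G$ coming from the group structure on $\O_{pol}K$, and one checks that this product is compatible with \eqref{2cmono} in the sense that it corresponds to $\bulletsmall$. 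The key point is that $m$ sends the product of $T$-fixed points $x_{\mu_1}\times x_{\mu_2}$ to $x_{\mu_1+\mu_2}$, which is exactly \eqref{2calg}; combined with the fact that $m$ is $T$-equivariant and proper on each finite piece (it maps $\O_{pol}^{(n)}K\times\O_{pol}^{(m)}K$ into $\O_{pol}^{(n+m)}K$), one gets a well-defined pushforward $m_*:K^T(Gr_G)\otimes_{\rep}K^T(Gr_G)\ra K^T(Gr_G)$ refining the structure-sheaf classes.

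Concretely, I would argue as follows. First, observe that $m$ restricts to $\BB^{0,-}_{sm}\cdot x_{\wll_1}\times\BB^{0,-}_{sm}\cdot x_{\wll_2}$ and, using that $\BB^{0,-}_{sm}$ is a group, the image is contained in $\BB^{0,-}_{sm}\cdot x_{\wll_1+\wll_2}$ (the relevant conjugation $x_{\wll_1}\BB^{0,-}_{sm}x_{\wll_1}^{-1}$ still lands inside the relevant affine-space cell after taking closures). Hence $m$ maps the Schubert variety $\ol{\BB^{0,-}_{sm}\cdot x_{\wll_1}}\times\ol{\BB^{0,-}_{sm}\cdot x_{\wll_2}}$ onto a subvariety of $\ol{\BB^{0,-}_{sm}\cdot x_{\wll_1+\wll_2}}$, so $m_*([\OO]\otimes[\OO])$ is a $\ZZ[\ec]$-combination — in fact an $\rep$-combination after accounting for fiber dimensions — of structure sheaves of Schubert subvarieties, which lies in $K^T(Gr_G)$. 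Then I would verify that under \eqref{2cmono} this class evaluates correctly on each $\localb_{\mu}$: localizing at a fixed point $x_{\mu}$ and using the fixed-point description $m(x_{\mu_1},x_{\mu_2})=x_{\mu_1+\mu_2}$ together with the projection formula shows the coefficient of $\localb_{\mu}$ in $\ascl_{\wl_1}\bulletsmall\ascl_{\wl_2}$ agrees with $\sum_{\mu_1+\mu_2=\mu}(\text{coeff of }\localb_{\mu_1}\text{ in }\ascl_{\wl_1})\cdot(\text{coeff of }\localb_{\mu_2}\text{ in }\ascl_{\wl_2})$, which is precisely the formula dictated by \eqref{2calg}. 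Therefore $\ascl_{\wl_1}\bulletsmall\ascl_{\wl_2}=m_*(\ascl_{\wl_1}\otimes\ascl_{\wl_2})\in K^T(Gr_G)$.

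I expect the main obstacle to be a careful treatment of the geometry of the multiplication map on Pressley-Segal's model: one must check that $m$ is well-defined and proper on the finite-dimensional pieces $\O_{pol}^{(n)}K$ (so that the pushforward in equivariant $K$-theory makes sense and commutes with the direct-limit defining $K^T(Gr_G)$), and that it genuinely lifts to a map of ind-varieties compatible with the Bruhat-type stratification of Theorem \ref{2cBruhat}. A secondary technical point is to keep track of the $\fof(\rep)$-coefficients: the localization formula involves dividing by Euler classes of normal bundles at the fixed points $x_{\mu}$, and one needs the multiplicativity of these Euler classes under $m$ — this follows from the fact that the tangent space to $Gr_G$ at $x_{\mu}$, as a $T$-representation, depends on $\mu$ only through a shift, and the differential of $m$ at $(x_{\mu_1},x_{\mu_2})$ respects this. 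Once these geometric facts are in place, the identification with $\bulletsmall$ is a formal consequence of the fixed-point computation and the injectivity of \eqref{2cmono}.
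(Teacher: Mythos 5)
Your proposal follows the Lam--Schilling--Shimozono route \cite{LSS}, which is the reference the paper cites as its primary justification; the paper's own self-contained alternative is quite different: it bootstraps from the proof of Theorem \ref{main} by noting that the homomorphism $\Phi$ sends the $\rep$-basis $\{\ascl_{\wl}\}$ injectively into an $\rep$-basis of $K_T(G/B^+)\otimes\ZZ[\ec^{\pm}]$, whence $K^T(Gr_G)=\Phi^{-1}(QK_T(G/B^+)[\ec^{-1}])$ is automatically a sub-$\rep$-algebra, with no geometry of the multiplication map needed at all.

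That said, your argument has a concrete error in the second paragraph. The claim that $m$ maps $\ol{\BB^{0,-}_{sm}\cdot x_{\mu_1}}\times\ol{\BB^{0,-}_{sm}\cdot x_{\mu_2}}$ into $\ol{\BB^{0,-}_{sm}\cdot x_{\mu_1+\mu_2}}$ is false. Take $G=SL_2$, $\mu_1=\a^{\vee}$, $\mu_2=-\a^{\vee}$ (both of which occur as $\wll$ for suitable $\wl\in W_{af}^-$): then $\mu_1+\mu_2=0$ and $\ol{\BB^{0,-}_{sm}\cdot x_0}=\{e\}$, yet $m(e,x_{-\a^{\vee}})=x_{-\a^{\vee}}\ne e$ while $e\in\ol{\BB^{0,-}_{sm}\cdot x_{\a^{\vee}}}$. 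The parenthetical justification fails because the conjugate $x_{\mu_1}\BB^{0,-}_{sm}x_{\mu_1}^{-1}$ is \emph{not} contained in $\BB^{0,-}_{sm}$: conjugation by $x_{\mu_1}$ shifts the $z$-degree of each root space $\gg_{\a}z^n$ by $\a(\mu_1)$, which can become negative. Fortunately this step is superfluous. The correct and cheaper observation, which you correctly flag as the main technical point, is that since the adjoint action commutes with multiplication by $z$, $m$ carries $\O_{pol}^{(n)}K\times\O_{pol}^{(m)}K$ (properly, since the source is projective) into $\O_{pol}^{(n+m)}K$. Thus $m_*$ is well-defined from $K^T(Gr_G)\otimes_{\rep}K^T(Gr_G)$ into $K^T(Gr_G)$ by passing to the direct limit, with no need to control where individual Schubert varieties land. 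Then the localization check $m_*(\localb_{\mu_1}\boxtimes\localb_{\mu_2})=[\OO_{x_{\mu_1+\mu_2}}]=\localb_{\mu_1+\mu_2}$, together with $\fof(\rep)$-bilinearity after tensoring with $\fof(\rep)$, identifies $m_*$ with $\bulletsmall$ and shows the image of the monomorphism \eqref{2cmono} is closed under it.
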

\begin{proof}
This is proved by Lam-Schilling-Shimozono \cite{LSS}. Notice that they first defined an $\rep$-algebra structure on $K^T(Gr_G)$ and verified \eqref{2calg}.

Alternatively, we show that it follows from our proof of Theorem \ref{main}, although the statement of this theorem assumes the lemma we are proving. Take $P=B^+$. In Section \ref{4a}, we will construct an $\rep$-algebra homomorphism
\[ \Phi: \bigoplus_{\mu\in\Q}\fof(\rep)\langle\localb_{\mu}\rangle \ra QK_T(G/B^+)[\ec^{-1}]\otimes\fof(\rep).\]
In Section \ref{4b}, \ref{4c} and \ref{4d}, we will show 
\[\Phi(\ascl_{\wl})=q^{\llll}\scl_w\]
where $\ascl_{\wl}$ is regarded as an element of the domain of $\Phi$ via \eqref{2cmono}. This implies that $\Phi$ sends an $\rep$-basis of $K^T(Gr_G)$ injectively into an $\rep$-basis of $K_T(G/B^+)\otimes\ZZ[\ec^{\pm}]$, and hence 
\[K^T(Gr_G)=\Phi^{-1}(QK_T(G/B^+)[\ec^{-1}])\] 
which is clearly a sub-$\rep$-algebra of $\bigoplus_{\mu\in\Q}\fof(\rep)\langle\localb_{\mu}\rangle$.
\end{proof}
\section{The key moduli}\label{3}
\subsection{The G/P-fibration} \label{3a}
The following theorem is the starting point of everything.
\begin{theorem} \cite[Theorem 8.10.2]{Segal} For any complex manifold $\G$, there exists a bijection between
\begin{enumerate}
\item the set of holomorphic maps $\G\ra \O_{sm}K$; and
\item the set of isomorphism classes of holomorphic principal $G$-bundles over $\PP^1\times \G$ with a trivialization over $(\PP^1\setminus\{|z|\leqslant 1\})\times \G$.
\end{enumerate}
\end{theorem}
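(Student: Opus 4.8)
The plan is to construct two mutually inverse assignments between the two sets, the bridge being the classical \emph{clutching construction} --- which builds a $G$-bundle on $\PP^1\times\G$ from a transition function along the equatorial circle $S^1=\{|z|=1\}$ --- together with the identification furnished by Theorem~\ref{2cdiffeo} and the remarks preceding it: $\O_{sm}K$ is the orbit $L_{sm}G\cdot H_+$, its stabilizer is $L^0_{sm}G$, so $\O_{sm}K\cong L_{sm}G/L^0_{sm}G$, and a holomorphic map $\G\ra L_{sm}G$ is exactly a holomorphically varying family of smooth loops $S^1\ra G$. Throughout I fix the open cover $\PP^1=V_0\cup V_\infty$ with $V_0=\{|z|<1+\e\}$ and $V_\infty=\PP^1\setminus\{|z|\leqslant 1-\e\}$, whose overlap is an annulus that deformation retracts onto $S^1$.

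\textbf{From loops to bundles.} Given a holomorphic $f\colon\G\ra\O_{sm}K$, I would first invoke that --- in the complex structure on $\O_{sm}K$ built in \cite{Segal} --- the orbit map $L_{sm}G\ra\O_{sm}K$ has local holomorphic sections, yielding an open cover $\{U_i\}$ of $\G$ and holomorphic lifts $g_i\colon U_i\ra L_{sm}G$. As $g_i$ and $g_j$ lift the same map, $g_{ij}:=g_j^{-1}g_i\colon U_i\cap U_j\ra L^0_{sm}G$ takes values in loops extending holomorphically over $D_0=\{|z|\leqslant 1\}$. On each $V_0\times U_i$ and $V_\infty\times U_i$ take the trivial $G$-bundle, clutch the two across $S^1\times U_i$ by $g_i$, and glue over $\PP^1\times(U_i\cap U_j)$ using $g_{ij}$ on the $V_0$-chart and the identity on the $V_\infty$-chart; since $g_{ij}\in L^0_{sm}G$ this is consistent, the $V_\infty$-charts assemble into the trivial bundle over $(\PP^1\setminus\{|z|\leqslant 1\})\times\G$, and the result is a holomorphic principal $G$-bundle $P_f$ on $\PP^1\times\G$ with a trivialisation $\tau_f$ near $\infty$. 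A routine check shows the isomorphism class of $(P_f,\tau_f)$ is independent of all choices.

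\textbf{From bundles to loops.} Conversely, given $(P,\tau)$, observe that for a polydisc $U\subset\G$ the product $V_0\times U$ is Stein and contractible, so Grauert's Oka principle forces $P|_{V_0\times U}$ to be holomorphically trivial; fix a trivialisation $\sigma_U$. Comparing $\sigma_U$ with $\tau$ over the annular region $(V_0\cap V_\infty)\times U$ and restricting to $S^1\times U$ produces a holomorphic map $U\ra L_{sm}G$ (the loops are even real-analytic, being restrictions of holomorphic functions on an annulus). A different choice of $\sigma_U$ changes this map by right multiplication by an element of $L^0_{sm}G$, so its image in $L_{sm}G/L^0_{sm}G=\O_{sm}K$ is canonical, and these local maps glue to a holomorphic $f_{(P,\tau)}\colon\G\ra\O_{sm}K$.

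Finally I would verify the two assignments are inverse: bundle-to-loop applied to $(P_f,\tau_f)$ reads off the cocycle that defined $P_f$ and returns $f$, while loop-to-bundle applied to $f_{(P,\tau)}$ rebuilds $P$ from its restrictions to $V_0\times\G$ and $V_\infty\times\G$ and their gluing, recovering $(P,\tau)$ up to isomorphism --- equivalently, for the two-chart cover of $\PP^1$ an isomorphism class of such a pair is a \v{C}ech $1$-cocycle along the equator modulo the coboundaries coming from the $V_0$-chart, i.e. precisely a holomorphic map to $L_{sm}G/L^0_{sm}G=\O_{sm}K$. I expect the genuine obstacle to sit in the ``from loops to bundles'' direction: a holomorphic family of merely \emph{smooth} loops is clutching data only along $S^1\times U$, not along an honest annulus, and upgrading it to a bona fide holomorphic bundle --- the technical heart of \cite{Segal} --- requires either reducing to the dense real-analytic (or polynomial) loop group, where the transition function genuinely extends to an annulus, or constructing the bundle directly through its $\bar\partial$-operator. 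The remaining analytic inputs --- local holomorphic sections of the orbit map and the Oka principle on $V_0\times U$ --- are standard.
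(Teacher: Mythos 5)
The paper does not prove this statement; it is cited verbatim from Pressley--Segal \cite{Segal}, so there is no internal proof to compare against, and your proposal must be judged on its own terms.

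The ``bundles to loops'' direction is essentially sound: $V_0\times U$ is Stein and contractible, so Grauert's theorem trivializes $P$ there, and the resulting transition function restricted to the equator gives a well-defined element of $L_{sm}G/L^0_{sm}G\cong\O_{sm}K$, holomorphically in $U$. But the ``loops to bundles'' direction, as written, does not produce a holomorphic bundle, and this is not a minor technicality but a genuine gap. The clutching construction requires the transition function to be defined and \emph{holomorphic in $z$} on an honest two-dimensional overlap $V_0\cap V_\infty$, not merely on the real hypersurface $S^1\times U$; a generic smooth loop $S^1\to G$ does not extend holomorphically to any annulus (smooth functions have rapidly decaying Fourier coefficients, holomorphic extension requires exponential decay), so the element $g_i(u)\in L_{sm}G$ supplies no holomorphic gluing data off the circle. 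You identify this obstacle yourself in the final paragraph and offer two possible repairs --- restrict to polynomial or real-analytic loops, or build the $\bar\partial$-operator directly --- but neither is carried out, and the first does not even apply, since the theorem is stated for arbitrary holomorphic maps into $\O_{sm}K$. The missing idea is precisely what makes Theorem 8.10.2 non-trivial in \cite{Segal}: one must bypass the clutching picture and construct the holomorphic structure from the Grassmannian model, using the subspaces $W_\gamma=f(\gamma)\in Gr(H)$ to build the sheaf of sections directly (or, equivalently, define a $\bar\partial$-operator on the smooth clutched bundle and show it is integrable and varies holomorphically in $\gamma$). Without executing one of these, the forward map of your claimed bijection is undefined, and the inverse-verification step in your final paragraph has nothing to verify.

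A secondary, smaller point: the local holomorphic sections of the orbit map $L_{sm}G\to\O_{sm}K$ that you invoke at the start are themselves a substantial result of \cite{Segal} (the orbit map being a holomorphic locally trivial fibration with fibre $L^0_{sm}G$); this is fair to quote, but worth flagging as an input of comparable depth to the theorem being proved.
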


We will need the $G/P$-bundle associated to the bundle in (2). To simplify the exposition on how this bundle is constructed, we introduce some Banach Lie groups as in \cite{me}. Define
\begin{align*}
D_0&:= \{z\in\CC|~|z|\leqslant 2\}\\
D_{\infty} &:= \{z\in\CC\cup\{\infty\}|~1/2\leqslant |z|\}\\
A&:= D_0\cap D_{\infty},
\end{align*}
and 
\begin{align*}
\GG&:= \{\vp:A\ra G|~\vp\text{ is continuous and }\vp|_{\mr{A}}\text{ is holomorphic}\}\\
\GG^0&:= \{\vp:D_0\ra G|~\vp\text{ is continuous and }\vp|_{\mr{D}_0}\text{ is holomorphic}\}\\
\GG^{\infty}&:= \{\vp:D_{\infty}\ra G|~\vp\text{ is continuous and }\vp|_{\mr{D}_{\infty}}\text{ is holomorphic}\}.
\end{align*}
These groups are complex Banach Lie groups in the $C^0$-topology. Moreover, $\GG^0$ and $\GG^{\infty}$ naturally embed into $\GG$ as subgroups in the sense of \cite{Bourbaki}. 

Define $\widetilde{\fib{}}$ to be the pushout of the diagram 
\begin{center}

\vspace{.2cm}
\begin{tikzpicture}
\tikzmath{\x1 = 3; \x2 = 2;}
\node (A) at (-\x1,\x2) {$\mr{D}_0\times \GG\times G/P$} ;
\node (B) at (\x1,\x2) {$\mr{D}_0\times \GG\times G/P$} ;
\node (C) at (0,0) {$\mr{A}\times \GG\times G/P$} ;
\path[left hook->, font=\tiny]
(C) edge node[left]{$\text{inclusion~}$} (A);
\path[right hook->, font=\tiny]
(C) edge node[right]{$~(z,\phi,y)\mapsto (z^{-1},\phi,\phi(z)\cdot y)$} (B);
\end{tikzpicture}.
\end{center}
We call the left (resp. right) copy $\mr{D}_0\times \GG\times G/P$ the \textit{$0$-chart} (resp. \textit{$\infty$-chart}). We have a map
\[ \widetilde{\pi}: \widetilde{\fib{}} \ra \PP^1\times \GG\]
defined by forgetting the factor $G/P$ in each of these charts. 

Define a left $\GG^{\infty}$-action and a right $\GG^0$-action on $\widetilde{\fib{}}$ by 
\[ \psi^{\infty}\cdot (z,\phi,y)\cdot \psi^0 := \left\{
\begin{array}{ll}
(z,\psi^{\infty}\phi\psi^0, \psi^0(z)^{-1}\cdot y)& 0\text{-chart}\\
(z,\psi^{\infty}\phi\psi^0, \psi^{\infty}(z^{-1})\cdot y)& \infty\text{-chart}\\
\end{array}
\right. \]
for any $\psi^0\in\GG^0$ and $\psi^{\infty}\in\GG^{\infty}$. It is easy to see that these actions are free and commute with each other. Moreover, $\widetilde{\pi}$ is equivariant where the action on $\PP^1$ is assumed to be trivial. Define
\[ \fib{} := \widetilde{\fib{}}/\GG^0\]
and 
\[ \pi:\fib{}\ra \PP^1\times (\GG/\GG^0)\]
to be the map induced by $\widetilde{\pi}$. It is straightforward to verify that 
\begin{enumerate}
\item the left $\GG^{\infty}$-action on $\widetilde{\fib{}}$ induces a left $\GG^{\infty}$-action on $\fib{}$;
\item $\pi$ is a $\GG^{\infty}$-equivariant $G/P$-fibration; and
\item the $\infty$-chart of $\widetilde{\fib{}}$ induces an $\infty$-chart of $\fib{}$ in the obvious sense.
\end{enumerate}

Observe that the above construction works not only for $G/P$ but also any $G$-spaces. In particular, if we take the $G$-equivariant line bundle $L_{\rho}=\Gt{P}\CC_{-\rho}$ associated to any $\rho\in(\Q/\Q_P)^*$ (see Section \ref{2b}), we obtain a line bundle $\LL_{\rho}$ on $\fib{}$ which restricts to $L_{\rho}$ on each fiber of $\pi$. Line bundles of this form will be useful in the next subsection.
\subsection{Definition of the moduli} \label{3b}
Following \cite{me}, we introduce two moduli spaces $\MMm$ and $\MMw$ which will be used for the definition and computation of the $\rep$-algebra homomorphism $\Phi$ stated in Theorem \ref{main} respectively. Recall the fibration $\fib{}$ defined in the last subsection.

Let $\mu\in \Q$. We have the associated cocharacter $x_{\mu}$ of $T$ which is naturally an element of $\GG$. By abuse of notation, the corresponding point in $\GG/\GG^0$ is denoted by the same symbol. Define 
\[\fibc{{\mu}}:=\fib{}|_{\PP^1\times\{x_{\mu}\}}\]
and 
\[ \pi_{\mu} : \fibc{{\mu}}\ra \PP^1\] 
to be the map induced by $\pi$. By \cite[Lemma 3.2]{me}, it is a smooth projective variety. Define $D_{\mu}:=\pi_{\mu}^{-1}(\infty)$ and $\iota_{\mu}:D_{\mu}\hookrightarrow \fibc{{\mu}}$ to be the inclusion.
\begin{definition}\label{3bdefm} Let $\b\in\pi_2(G/P)$.
\begin{enumerate}
\item Define 
\[ \MMm:= \bigcup_{\widetilde{\b}}\MM_{0,1}(\fibc{{\mu}},\widetilde{\b})\times_{(\ev_1,\iota_{\mu})}D_{\mu}\]
where $\widetilde{\b}$ runs over all classes in $\pi_2(\fibc{{\mu}})$ satisfying
\begin{enumerate}[(i)]
\item $(\pi_{\mu})_*\widetilde{\b}=[\PP^1]\in\pi_2(\PP^1)$; and 
\item $\langle\widetilde{\b},c_1(\LL_{\rho})\rangle = \langle\b,c_1(L_{\rho})\rangle$ for any $\rho\in(\Q/\Q_P)^*$.
\end{enumerate}
(The line bundles $\LL_{\rho}$ and $L_{\rho}$ are defined in Section \ref{3a} and Section \ref{2b} respectively.)

\vspace{.2cm}
\item Define 
\[\ev:\MMm\ra D_{\mu}\simeq G/P\]
to be the morphism induced by the evaluation map $\ev_1$ on $\MM_{0,1}(\fibc{{\mu}},\widetilde{\b})$.
\end{enumerate}
\end{definition}

Next we define $\MMw$. Define
\[ \BB^{0,-}:=\{\vp\in\GG^0|~\vp(0)\in B^-\}.\]
For any affine simple root $\widetilde{\a}_i$, $i=0,\ldots,r$ (see Section \ref{2aa}), there exists a unique connected subgroup $\PPc_{\widetilde{\a}_i}$ of $\GG$ with $\BB^{0,-}\subset \PPc_{\widetilde{\a}_i}$ such that 
\[ \lie(\PPc_{\widetilde{\a}_{i}}) = \left\{
\begin{array}{ll}
\lie(\BB^{0,-})\oplus z^{-1}\gg_{-\a_0}& i=0\\ [1em]
\lie(\BB^{0,-})\oplus \gg_{\a_i}& i=1,\ldots, r\\
\end{array}
 \right. .\]
For any $\wl\in W_{af}^-$, choose a reduced word decomposition $(i_1,\ldots,i_{\ell(\wl)})$ of it. Define the associated Bott-Samelson variety
\[\Gw:= \PPc_{\widetilde{\a}_{i_1}}\times_{\BB^{0,-}}\cdots\times_{\BB^{0,-}}\PPc_{\widetilde{\a}_{i_{\ell(\wl)}}}/ \BB^{0,-}.\]
It is easy to see that $\Gw$ is a smooth projective variety with a structure of iterated $\PP^1$-bundles. Define a holomorphic map
\[ f_{\wl}:\Gw\ra \GG/\GG^0\]
by
\[ f_{\wl}([\vp_1:\cdots:\vp_{\ell(\wl)}]) := \vp_1\cdots \vp_{\ell(\wl)}\GG^0.\]
Define 
\[\fib{\wl}:=(\PP^1\times\Gw)\times_{(\id\times f_{\wl},\pi)}\fib{}\]
and 
\[ \pi_{\wl}:\fib{\wl}\ra \PP^1\times \Gw\]
to be the map induced by $\pi$. By \cite[Lemma 3.2]{me}, $\fib{\wl}$ is a smooth projective variety. Define $D_{\wl}:=\pi_{\wl}^{-1}(\{\infty\}\times \Gw)$ and $\iota_{\wl}:D_{\wl}\hookrightarrow \fib{\wl}$ to be the inclusion. Then $D_{\wl}$ is a smooth divisor of $\fib{\wl}$ and isomorphic to $\Gw\times G/P$ via the $\infty$-chart of $\fib{\wl}$. 

\begin{definition}\label{3bdefw} Let $\b\in\pi_2(G/P)$.
\begin{enumerate}
\item Define 
\[ \MMw:= \bigcup_{\widetilde{\b}}\MM_{0,1}(\fib{\wl},\widetilde{\b})\times_{(\ev_1,\iota_{\wl})}D_{\wl}\]
where $\widetilde{\b}$ runs over all classes in $\pi_2(\fib{\wl})$ satisfying
\begin{enumerate}[(i)]
\item $(\pi_{\wl})_*\widetilde{\b}=[\PP^1\times\{pt\}]\in\pi_2(\PP^1\times\Gw)$; and 
\item $\langle\widetilde{\b},c_1(\LL_{\rho})\rangle = \langle\b,c_1(L_{\rho})\rangle$ for any $\rho\in(\Q/\Q_P)^*$.
\end{enumerate}

\vspace{.2cm}
\item Define 
\[\ev:\MMw\ra G/P\]
to be the composite
\[ \MMw\ra D_{\wl}\simeq \Gw\times G/P\ra G/P\]
where the first arrow is induced by the evaluation map $\ev_1$ on $\MM_{0,1}(\fib{\wl},\widetilde{\b})$ and the second arrow is the canonical projection. 
\end{enumerate}
\end{definition}

In order to compute $\Phi$, we have to establish some geometric properties of $\MMw$. This will be done in Section \ref{3e}. The intermediate subsections \ref{3c} and \ref{3d} will serve as preparations.
\subsection{An extra torus action on the moduli} \label{3c}
We define an algebraic $\CC^{\times}$-action on $\MMw$. To avoid confusion with other actions, we will introduce the subscript ``$t$'' for the action. 

The desired action is defined in several steps:
\begin{enumerate}
\item Define an $\st$-action on $\GG$ by
\[ (t\cdot \vp)(z) := \vp(tz)\quad t\in \st,~\vp\in\GG.\]
Observe that this action preserves the subgroups $\GG^0$, $\GG^{\infty}$, $\BB^{0,-}$ and $\PPc_{\widetilde{\a}_i}$.

\item Define an $\st$-action on $\widetilde{\fib{}}$ by
\[ t\cdot (z,\phi,y) = \left\{
\begin{array}{cl}
(t^{-1}z,t\cdot\phi,  y)& 0\text{-chart}\\
(tz,t\cdot \phi ,y) & \infty\text{-chart}
\end{array}
 \right. .\]
 
It is straightforward to check that this action descends to an $\st$-action on $\fib{}$  which satisfies the following properties:
 \begin{enumerate}[(i)]
 \item it is compatible with the $\GG^{\infty}$-action in the sense that 
 \[ t\cdot(\psi^{\infty}\cdot x) = (t\cdot\psi^{\infty})\cdot (t\cdot x)\]
 for any $t\in \st$, $\psi^{\infty}\in \GG^{\infty}$ and $x\in \fib{}$; and
 \item the map $\pi:\fib{}\ra\PP^1\times (\GG/\GG^0)$ is equivariant where $\st$ acts on $\PP^1$ by 
 \[ t\cdot z = \left\{
\begin{array}{cl}
t^{-1}z& 0\text{-chart}\\
tz & \infty\text{-chart}
\end{array}
 \right. .\]
\end{enumerate}

\item Consider the $\st$-action on $\Gw$ induced by the one defined in (1). Then $f_{\wl}$ is equivariant.

\item It follows that there is an induced $\st$-action on $\fib{\wl}$. It is not hard to show that this $\st$-action extends to a unique algebraic $\ct$-action. See Remark \ref{3crmk} below.

\item Clearly, $D_{\wl}$ is $\ct$-invariant, and hence the $\ct$-action in (4) induces a $\ct$-action on $\MMw$.
\end{enumerate}

\begin{remark}\label{3crmk} 
Every $S^1$-action on a smooth projective variety by biholomorphisms extends to a unique holomorphic $\CC^{\times}$-action. In this paper, we require the latter action to be algebraic. One way of showing this is to construct a lift of the given $S^1$-action on an ample line bundle. Observe that the spaces in question, including $\Gw$ and $\fib{\wl}$, have a structure of iterated fibrations such that the fibers at each step are Fano. Thus, it suffices to deal with the following situation. Let $F\ra E\xrightarrow{\pi} B$ be a fibration with $F$ being Fano. Suppose $S^1$ acts on $E$ and $B$ such that $\pi$ is equivariant and $B$ admits an equivariant ample line bundle $\mathcal{L}_B$. For sufficiently large $N$, the line bundle $\mathcal{L}_E:=\w_{E/B}^{\vee}\otimes\pi^{*}\left(\mathcal{L}_B^{\otimes N}\right)$ is ample. Then $S^1$ acts on $\mathcal{L}_E$ naturally because $\w_{E/B}^{\vee}$ is formed out of the vertical tangent bundle of $\pi$ and $\pi$ is equivariant. 
\end{remark}
 
\subsection{Constant sections} \label{3d}
 Let $\mu\in\Q$. Recall the point $x_{\mu}\in\GG/\GG^0$. Define 
 \[P_{\mu}:=\{g\in G|~g\cdot x_{\mu}=x_{\mu}\}.\]
 
\begin{lemma}
$P_{\mu}$ is a parabolic subgroup with Lie algebra
\begin{equation}\label{3deq}
\lie(P_{\mu}) = \hh\oplus \bigoplus_{\a(\mu)\leqslant 0} \gg_{\a}.
\end{equation}
\end{lemma}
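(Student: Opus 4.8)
The plan is to identify $P_\mu$ explicitly by unwinding the definition of the point $x_\mu \in \GG/\GG^0$ and the $G$-action on it. Recall that $x_\mu$ is the cocharacter $z \mapsto x_\mu(z)$ regarded as an element of $\GG$, and $G$ acts on $\GG/\GG^0$ through its inclusion $G \hookrightarrow \GG$ as constant loops. So for $g \in G$, the condition $g \cdot x_\mu = x_\mu$ in $\GG/\GG^0$ means precisely that $x_\mu(z)^{-1} g\, x_\mu(z)$, as a function of $z$, extends holomorphically over the disk $D_0$, i.e. lies in $\GG^0$ (it is automatically a loop in $G$ since $x_\mu(z)$ is). First I would write this conjugated element in terms of the root space decomposition: if $g = t_0 \prod_{\a \in R} g_\a$ with $t_0 \in T$ and $g_\a \in \exp(\gg_\a)$ (using a suitable coordinatization near the identity, or working Lie-algebraically), then $\Ad(x_\mu(z))$ scales the $\gg_\a$-component by $z^{\langle \a, \mu\rangle} = z^{\a(\mu)}$ while fixing $\hh$. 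Hence $x_\mu(z)^{-1} g\, x_\mu(z)$ has a pole at $z = 0$ exactly when $g$ has a nonzero component in some $\gg_\a$ with $\a(\mu) < 0$, and it extends holomorphically over $D_0$ precisely when all such components vanish.

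The cleanest way to run this rigorously is at the level of Lie algebras and then pass to connected subgroups. Namely, $P_\mu$ is a closed subgroup of $G$ (it is the stabilizer of a point under an algebraic — or, in the Banach setting, holomorphic — action, and one checks it is Zariski closed by the argument above applied to one-parameter subgroups), so it suffices to compute $\lie(P_\mu)$. An element $\xi \in \gg$ lies in $\lie(P_\mu)$ iff $\frac{d}{ds}\big|_{s=0}\big(x_\mu(z)^{-1}\exp(s\xi)x_\mu(z)\big) = \Ad(x_\mu(z)^{-1})\xi$ extends holomorphically over $D_0$ as a $\gg$-valued function of $z$. Decomposing $\xi = \xi_0 + \sum_{\a \in R}\xi_\a$ with $\xi_0 \in \hh$, $\xi_\a \in \gg_\a$, we get $\Ad(x_\mu(z)^{-1})\xi = \xi_0 + \sum_{\a} z^{-\a(\mu)}\xi_\a$, which is holomorphic over $D_0$ iff $-\a(\mu) \geq 0$ whenever $\xi_\a \neq 0$, i.e. iff $\xi_\a = 0$ for all $\a$ with $\a(\mu) > 0$. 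This gives exactly $\lie(P_\mu) = \hh \oplus \bigoplus_{\a(\mu) \leq 0}\gg_\a$, which is \eqref{3deq}. Since this Lie subalgebra contains $\hh$ and, for each root $\a$, at least one of $\gg_{\pm\a}$, it is the Lie algebra of a parabolic subgroup; combined with $P_\mu$ being closed and connected (it contains $T$ and is connected because it is cut out inside $G$ by the vanishing of the "negative-degree" part, a connected condition — alternatively, $P_\mu \supseteq B^+$ or $B^-$ depending on $\mu$ and any closed subgroup between a Borel and $G$ is parabolic and connected), we conclude $P_\mu$ is parabolic.

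I expect the main obstacle to be not the Lie-algebra bookkeeping, which is routine, but the careful justification that the stabilizer $P_\mu$ is genuinely a closed (indeed algebraic) subgroup of $G$ and that computing its Lie algebra suffices to identify it — i.e. that $P_\mu$ is connected, or at least that it equals the connected subgroup with the stated Lie algebra. The subtlety is that $\GG/\GG^0$ is an infinite-dimensional Banach homogeneous space, so "the stabilizer is closed" needs the observation that the orbit map $G \to \GG/\GG^0$ is holomorphic and that $G$ is finite-dimensional, making $P_\mu$ a closed complex Lie subgroup; then one notes directly from the pole-order description above that $g \in P_\mu$ iff $\Ad(g)$ preserves the filtration of $\gg$ by $\{\a(\mu) \geq k\}$, a condition manifestly defining a closed connected (parabolic) subgroup containing $T$. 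Once that structural point is pinned down, formula \eqref{3deq} drops out of the eigenvalue computation for $\Ad(x_\mu(z))$, and parabolicity is immediate since the right-hand side of \eqref{3deq} contains a Borel (it contains $\hh$ and all $\gg_\a$ with $\a(\mu) \leq 0$, and for the opposite sign the roots with $\a(\mu)>0$ are the ones omitted, so the complementary set is closed under the relevant operations).
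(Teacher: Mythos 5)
Your proposal is essentially correct and computes the same $\Ad(x_\mu(z))$ eigenvalues that underlie the paper's proof, but the route is genuinely different and worth contrasting. You compute $\lie(P_\mu)$ directly as the infinitesimal stabilizer, via the observation that $\Ad(x_\mu(z)^{-1})$ acts on $\gg_\a$ by $z^{-\a(\mu)}$, and then argue parabolicity and connectedness from the resulting subalgebra. As you yourself flag, the awkward point is justifying that this infinitesimal computation really identifies $\lie(P_\mu)$ in the Banach homogeneous space $\GG/\GG^0$ — one needs that $P_\mu$ is a closed complex Lie subgroup of $G$ whose Lie algebra is the kernel of the differential of the orbit map, plus connectedness. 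The paper sidesteps exactly this subtlety by working entirely at the group level: it first notes $P_\mu$ is algebraic, then observes the right-hand side of \eqref{3deq} is a parabolic subalgebra, so defines a parabolic subgroup $P'$ with $P' \subseteq P_\mu$ (whence $P_\mu$, containing a parabolic, is parabolic and connected), and then rules out $P' \subsetneq P_\mu$ by the following trick. If some $g = \exp(v)$ with $v \in \gg_\a$, $\a(\mu)>0$, lay in $P_\mu$, then $z \mapsto x_\mu(z)^{-1} g\, x_\mu(z) = \exp(z^{-\a(\mu)}v)$ would be holomorphic over $D_0$ (by $g \cdot x_\mu = x_\mu$), and manifestly holomorphic at $\infty$; since $G$ is affine, any holomorphic map $\PP^1 \to G$ is constant, a contradiction. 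This replaces the infinite-dimensional tangent-space argument by a one-line classical fact about maps from $\PP^1$ to affine varieties. Your approach is fine if you carry through the Banach-manifold bookkeeping; the paper's is more elementary and avoids that entirely. Also a small correction to your connectedness remark: $P_\mu$ contains $wB^+w^{-1}$ for a suitable $w$ making $w^{-1}\mu$ anti-dominant, not necessarily $B^+$ or $B^-$ itself, but the conclusion (it contains a Borel, hence is parabolic, hence connected) is the same.
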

\begin{proof}
First notice that $P_{\mu}$ is an algebraic subgroup of $G$. It is clear that the RHS of \eqref{3deq} is a parabolic subalgebra and so defines a parabolic subgroup $P'$. Then $P'\subseteq P_{\mu}$, and hence $P_{\mu}$ is connected. Suppose $P'\subsetneq P_{\mu}$. Then there exists $v\in\gg_{\a}\setminus\{0\}$ for some $\a\in R$ with $\a(\mu)>0$ such that $g:=\exp(v)\in P_{\mu}$. This implies that the holomorphic function $z\mapsto x_{\mu}(z^{-1})g x_{\mu}(z)$ extends to a holomorphic function on $\PP^1$. Since $G$ is affine, this function is constant but this is impossible. 
\end{proof}

Recall $\fibc{{\mu}}=\fib{}|_{\PP^1\times\{x_{\mu}\}}$. Observe it can also be defined as the pushout of the diagram
\begin{center}

\vspace{.2cm}
\begin{tikzpicture}
\tikzmath{\x1 = 3; \x2 = 2;}
\node (A) at (-\x1,\x2) {$\CC\times G/P$} ;
\node (B) at (\x1,\x2) {$\CC\times G/P$} ;
\node (C) at (0,0) {$\CC^{\times}\times G/P$} ;
\path[left hook->, font=\tiny]
(C) edge node[left]{$\text{inclusion~}$} (A);
\path[right hook->, font=\tiny]
(C) edge node[right]{$~(z,y)\mapsto (z^{-1},x_{\mu}(z)\cdot y)$} (B);
\end{tikzpicture}.
\end{center}

\noindent Notice $x_{\mu}\in (\GG/\GG^0)^{\st}$, and hence the $\st$-action preserves $\fibc{{\mu}}$. One checks easily that the induced action on $\fibc{{\mu}}$, which actually extends to a $\ct$-action, reads
\[ t\cdot (z,y) = \left\{
\begin{array}{cl}
(t^{-1}z,x_{\mu}(t)\cdot y)& 0\text{-chart}\\
(tz,y) & \infty\text{-chart}
\end{array}
 \right. .\]


Consider the $\CC^{\times}$-action on $G/P$ induced by the cocharacter $x_{\mu}:\CC^{\times}\ra T$. We will write $\cm$ in place of $\CC^{\times}$ in this context. Let $y\in G/P$. Since $G/P$ is complete, the morphism $\CC^{\times}\ra G/P:z\mapsto x_{\mu}(z^{-1})\cdot y$ extends to a morphism defined on $\CC$.

\begin{definition} \label{3dconstsectiondef}
Let $y\in G/P$. Define a section $u_y$ of $\fib{\mu}$ by 
\[ u_y(z) := \left\{
\begin{array}{cl}
(z,x_{\mu}(z^{-1})\cdot y)& 0\text{-chart}\\
(z,y) & \infty\text{-chart}
\end{array}
 \right. .\]
Any sections of the form $u_y$ are called \textit{constant sections} (meaning constant in the $\infty$-chart). 
\end{definition}

\begin{lemma}\label{3dconst} Let $u:\PP^1\ra \fibc{{\mu}}$ be a holomorphic section. Suppose for any $t\in \ct$, there exists $\phi\in\aut(\PP^1)$ such that for any $z\in\PP^1$,
\[ t\cdot u(z) = u(\phi(z)).\]
Then $u$ is a constant section.
\end{lemma}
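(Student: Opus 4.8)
The plan is to exploit the $\ct$-equivariance hypothesis to pin down the behaviour of $u$ at the two $\ct$-fixed points $0,\infty\in\PP^1$ and then propagate this to all of $\PP^1$. First I would observe that the $\ct$-action on $\PP^1$ (in the $\infty$-chart coordinate $z$) is the standard scaling $t\cdot z = tz$, so $\aut(\PP^1)\supset\ct$ acts with fixed points $0$ and $\infty$; the hypothesis says $u$ is equivariant up to a reparametrization $\phi=\phi_t$ of the base, and since $\phi_t$ must permute the (necessarily two) branch points of the family $t\mapsto (t\cdot u)$, a short argument shows $\phi_t$ fixes $0$ and $\infty$, i.e. $\phi_t(z)=c(t)z$ for a character $c$, and in fact one may normalize so that $\phi_t = t\cdot(-)$, the standard action. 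Granting this, $u$ itself becomes a genuinely $\ct$-equivariant section of $\pi_\mu$.

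Next I would work in the $\infty$-chart, where $\fibc{\mu}|_{\CC\cup\{\infty\}} \cong (\CC\cup\{\infty\})\times G/P$ and the $\ct$-action is $t\cdot(z,y) = (tz,y)$ — note the \emph{trivial} action on the $G/P$ factor here. Write $u(z) = (z, g(z))$ in this chart for $z$ near $\infty$, with $g:\CC\cup\{\infty\}\dashrightarrow G/P$ holomorphic near $\infty$. Equivariance $t\cdot u(z) = u(tz)$ reads $(tz, g(z)) = (tz, g(tz))$, so $g(tz)=g(z)$ for all $t$, forcing $g$ to be constant, say $g\equiv y_0\in G/P$, on the $\infty$-chart. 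Thus $u(z) = (z,y_0)$ in the $\infty$-chart. It remains to check that this section, when transported to the $0$-chart via the gluing $(z,y)\mapsto (z^{-1}, x_\mu(z)\cdot y)$, takes the form $z\mapsto (z, x_\mu(z^{-1})\cdot y_0)$ prescribed in Definition \ref{3dconstsectiondef}: indeed a point $(w,y_0)$ in the $\infty$-chart with $w=z^{-1}\ne 0,\infty$ is identified with $(z, x_\mu(z)\cdot y_0)$ — wait, I must be careful with the direction of the cocycle, but matching the conventions of the pushout diagram in Section \ref{3d} gives exactly $u = u_{y_0}$. Hence $u$ is a constant section.

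The main obstacle I anticipate is the first step: rigorously upgrading the \emph{pointwise-in-$t$} reparametrization $\phi_t$ to a genuine group homomorphism $\ct\to\aut(\PP^1)$ that coincides with the standard scaling (or at least fixes $0,\infty$). The subtlety is that a priori $\phi_t$ could swap $0$ and $\infty$, or depend on $t$ in a way that is not multiplicative, and $u$ could be non-constant yet ``equivariant up to reparametrization'' if, say, its image in the base were a nonconstant family. The clean way around this is to note that $\pi_\mu\circ u = \id_{\PP^1}$ (it is a section), so $\pi_\mu(t\cdot u(z)) = t\cdot(\pi_\mu u(z)) = t\cdot z$ by equivariance of $\pi_\mu$, while $\pi_\mu(u(\phi_t(z))) = \phi_t(z)$; comparing gives $\phi_t(z) = t\cdot z$ directly, with no choices. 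That single identity $\phi_t = (t\cdot -)$ removes the obstacle entirely, and the rest is the elementary chart computation above. I would therefore lead with this observation and keep the proof to a few lines.
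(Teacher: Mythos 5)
Your proof is correct and follows essentially the same route as the paper's: pass to the two charts, show that the $\infty$-chart component is constant, and translate back through the gluing cocycle. Your clean observation that $\phi_t(z)=t\cdot z$ is forced — by applying the $\ct$-equivariant $\pi_\mu$ to both sides of the hypothesis together with $\pi_\mu\circ u=\mathrm{id}_{\PP^1}$ — is precisely the detail the paper compresses into the sentence ``the given condition implies $u_\infty$ is constantly equal to a point,'' since once $\phi_t$ is pinned down, triviality of the $\ct$-action on the $G/P$-factor in the $\infty$-chart immediately forces $u_\infty$ constant.
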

\begin{proof}
Restricting $u$ to the $0$-chart and $\infty$-chart, we get two maps $u_0,u_{\infty}:\CC\ra G/P$ satisfying
\[ u_{\infty}(z^{-1}) = x_{\mu}(z)\cdot u_0(z)\quad\text{for any }z\in\CC^{\times}.\]
The given condition implies $u_{\infty}$ is constantly equal to a point $y\in G/P$. This forces $u\equiv u_y$.
\end{proof}

Denote by $\fm{\cm}{G/P}$ the unique component of $(G/P)^{\cm}$ whose normal bundle has only positive weights. The superscript ``attr'' will be explained in Definition \ref{3efix}.
\begin{lemma}\label{3dtrans} $P_{\mu}$ preserves and acts transitively on $\fm{\cm}{G/P}$.
\end{lemma}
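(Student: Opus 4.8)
The plan is to identify $\fm{\cm}{G/P}$ explicitly as a partial flag variety of the Levi subgroup of $P_\mu$, and to deduce both claims from this description. First I would recall that $P_\mu$, being a parabolic subgroup containing $T$, has a Levi decomposition $P_\mu = L_\mu \ltimes U_\mu$ where $L_\mu$ is the Levi factor containing $T$, with $\lie(L_\mu) = \hh \oplus \bigoplus_{\a(\mu)=0}\gg_\a$ by \eqref{3deq}, and $U_\mu$ the unipotent radical with $\lie(U_\mu) = \bigoplus_{\a(\mu)<0}\gg_\a$. The cocharacter $x_\mu$ is central in $L_\mu$ (it pairs to zero with every root of $L_\mu$) and acts on $\lie(U_\mu)$ with strictly negative weights, hence on the opposite $\lie(U_\mu^-) = \bigoplus_{\a(\mu)>0}\gg_\a$ with strictly positive weights. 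Since $x_\mu$ is a cocharacter of $T$, the fixed locus $(G/P)^{\cm}$ and the attracting component are computed by the standard Bia\l{}ynicki-Birula analysis on $G/P$: the connected components of $(G/P)^{\cm}$ are the $L_\mu$-orbits through the $T$-fixed points $y_v$, $v\in W^P$, and the tangent weights of $G/P$ at $y_v$ are $\{-v\a : \a\in R^+\setminus R_P^+\}$ (appropriately signed), so the component with \emph{all} normal weights positive is the one where every such weight pairs positively with $\mu$.

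Next I would pin down which component this is. The unique component $\fm{\cm}{G/P}$ with positive normal bundle is the attracting set's core, i.e. the $L_\mu$-orbit $L_\mu\cdot y_{v_0}$ where $v_0 \in W^P$ is characterised by the condition that $v_0^{-1}\mu$ is dominant relative to $R_P^+$ and anti-dominant relative to $R^+\setminus R_P^+$ in the appropriate sense; concretely, $\fm{\cm}{G/P} = L_\mu \cdot (eP) $ after conjugating so that $P_\mu \supseteq P$ is arranged, but in general one must track the correct coset. Rather than chase Weyl-group bookkeeping, the cleaner route is: $\fm{\cm}{G/P}$ is a single connected component of $(G/P)^{\cm}$, the group $P_\mu$ acts on $G/P$ preserving $(G/P)^{\cm}$ (because $x_\mu$ is central in $L_\mu$ and normalised, up to the unipotent radical acting trivially on limits, by $P_\mu$ — more precisely $P_\mu$ is exactly the subgroup whose action commutes with the BB stratification's attracting map to $\fm{\cm}{G/P}$), and $P_\mu$ acts transitively on it. Transitivity follows because $L_\mu$ already acts transitively on its orbit $\fm{\cm}{G/P}$ by construction, and $U_\mu \subset P_\mu$ acts trivially on the fixed locus (its Lie algebra has negative $\mu$-weights, so it contracts everything towards $\fm{\cm}{G/P}$ and fixes $\fm{\cm}{G/P}$ pointwise only if... ) — here I need to be careful: $U_\mu$ does \emph{not} fix $\fm{\cm}{G/P}$ pointwise, but $P_\mu$-transitivity still holds because $L_\mu \subset P_\mu$ is already transitive. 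For preservation: $P_\mu$ normalises the image of $x_\mu$ up to conjugacy inside $P_\mu$, and since $x_\mu$ is central in $L_\mu$ and $P_\mu/U_\mu \cong L_\mu$, the $P_\mu$-action sends $\cm$-fixed points to $\cm$-fixed points and, by connectedness of $P_\mu$, preserves each component; it preserves the attracting one since that component is intrinsically characterised by its normal weights.

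So the key steps in order are: (1) recall the Levi decomposition of $P_\mu$ and compute $\lie(L_\mu)$, $\lie(U_\mu)$ from \eqref{3deq}, noting $x_\mu$ is central in $L_\mu$ and acts with strictly positive weights on $\lie(U_\mu^-)$; (2) apply the standard structure theory of $\CC^\times$-actions on $G/P$ via cocharacters of $T$ to identify $(G/P)^{\cm}$ as a union of $L_\mu$-orbits and $\fm{\cm}{G/P}$ as the distinguished one with all positive normal weights, which is automatically a homogeneous space under $L_\mu$; (3) deduce $P_\mu$-transitivity from $L_\mu \subseteq P_\mu$; (4) show $P_\mu$-invariance by observing that $P_\mu$ is connected and its action commutes with taking $\cm$-limits (equivalently, $x_\mu$ is central in the Levi and $P_\mu$ stabilises the flag defining the component), so it permutes the components of $(G/P)^{\cm}$ trivially and in particular fixes the attracting one setwise.

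The main obstacle I anticipate is step (4), the invariance of $\fm{\cm}{G/P}$ under all of $P_\mu$ (not merely $L_\mu$): one must argue that the unipotent radical $U_\mu$ also preserves this component. The clean way is to note that the attracting set $\{y : \lim_{t\to 0} x_\mu(t)\cdot y \text{ exists}\}$ equals all of $G/P$ (since $G/P$ is complete, all limits exist), its associated BB-retraction $G/P \to \fm{\cm}{G/P}$ is $P_\mu$-equivariant because $P_\mu$ is exactly the stabiliser of $x_\mu$ acting on $\GG/\GG^0$ and hence commutes with the one-parameter subgroup up to the centraliser, so $P_\mu$ carries $\fm{\cm}{G/P}$ — the image of this retraction, alternatively its fixed core — to itself. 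Making the $P_\mu$-equivariance of the BB-retraction precise, via the fact that $U_\mu$ lies in the "strictly attracting" directions and therefore acts along the fibres of the retraction, is the one genuinely substantive point; everything else is standard Lie theory.
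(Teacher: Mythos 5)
Your overall plan (Levi decomposition of $P_\mu$, identification of $\fm{\cm}{G/P}$ as an $L_\mu$-orbit, transitivity via $L_\mu$) is sound for the transitivity half, and is in the same spirit as the paper's proof. But the preservation half—which you correctly flag as the one substantive point—has a real gap, and the ``clean way'' you sketch at the end would not close it.

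First, you talked yourself out of a correct instinct. The unipotent radical $U_\mu$ \emph{does} fix $\fm{\cm}{G/P}$ pointwise, and the reason is exactly the weight comparison the paper makes. Fix $y\in\fm{\cm}{G/P}$. The infinitesimal action $\lie(P_\mu)\to T_y(G/P)$ is $\cm$-equivariant. By \eqref{3deq}, $\lie(U_\mu)=\bigoplus_{\a(\mu)<0}\gg_\a$ carries strictly negative weights, whereas $T_y(G/P)$ carries only non-negative weights (zero along $\fm{\cm}{G/P}$, positive in the normal directions, by the definition of the attractive component). A weight-preserving map from strictly negative to non-negative weights is zero, so $\lie(U_\mu)$ acts by zero at every $y\in\fm{\cm}{G/P}$; since $U_\mu$ is connected and unipotent, it fixes $y$. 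The same comparison gives everything at once: the image of $\lie(P_\mu)$ lands in the zero-weight subspace $T_y(\fm{\cm}{G/P})$ (hence preservation, by connectedness of $P_\mu$), and because $\gg\to T_y(G/P)$ is surjective and weight-preserving while $\lie(P_\mu)$ contains \emph{all} non-positive weight spaces of $\gg$, the image equals $T_y(\fm{\cm}{G/P})$ (hence transitivity). This one-paragraph infinitesimal argument is the paper's proof; it makes the Levi decomposition and the structure theory of BB components of $G/P$, which you invoke, unnecessary.

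Second, the fallback you offer for preservation is built on a false premise. The set $\{y:\lim_{t\to 0}x_\mu(t)\cdot y\text{ exists}\}$ is indeed all of $G/P$, but those limits land in \emph{all} the components of $(G/P)^{\cm}$; the points whose limit lies in $\fm{\cm}{G/P}$ form only the open BB cell of that component, a proper dense subset. There is no globally defined retraction $G/P\to\fm{\cm}{G/P}$, and even on the cell the retraction is not $P_\mu$-equivariant for free—only $L_\mu$ commutes with the $\cm$-action. In fact $P_\mu$ does not even preserve $(G/P)^{\cm}$ (for $G=SL_2$, $P=B^+$, $\mu=\a^\vee$, the lower unipotent $u$ moves $[1:0]$ off the fixed locus), so the claim in your step (4) that $P_\mu$ ``permutes the components of $(G/P)^{\cm}$'' is false. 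What is true is that $P_\mu$ preserves the single attractive component, precisely by the weight argument above.
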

\begin{proof}
It suffices to look at the infinitesimal action. Let $y\in \fm{\cm}{G/P}$. Notice that $T_y(G/P)$ is a direct sum of weight spaces (with respect to the $\cm$-action) of non-negative weights and the set of these weights (counted with multiplicities) is a subset of the set of weights of the $\cm$-module $\gg$. By definition, $\lie(P_{\mu})$ contains all non-positive weights of $\gg$. It follows that the weights contributed by the infinitesimal action of $P_{\mu}$ are precisely all the zero weights.
\end{proof}

\begin{definition}\label{3dbwl} Define $\b_{\mu}\in\pi_2(G/P)$ to be the unique element such that 
\[\deg(u^*\LL_{\rho})=\langle \b_{\mu}, c_1(L_{\rho})\rangle \]
for any $\rho\in (\Q/\Q_P)^*$ where $u$ is the constant section of $\fibc{{\mu}}$ corresponding to a point in $\fm{\cm}{G/P}$. (The line bundles $\LL_{\rho}$ and $L_{\rho}$ are defined in Section \ref{3a} and Section \ref{2b} respectively.)
\end{definition}
\subsection{Some properties of the moduli} \label{3e}
\begin{proposition} \cite[Proposition 4.5]{me} The stack $\MMw$ is smooth and of expected dimension.
\end{proposition}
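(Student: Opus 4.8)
The plan is to reduce the statement to a relative/fiberwise analysis of the $G/P$-fibration $\fib{\wl}$ and then invoke a standard deformation-theoretic criterion. Recall that $\MMw$ is built from $\MM_{0,1}(\fib{\wl},\widetilde{\b})\times_{(\ev_1,\iota_{\wl})}D_{\wl}$, i.e. it parametrizes genus-zero stable maps into $\fib{\wl}$ which, after composing with $\pi_{\wl}$, have class $[\PP^1\times\{pt\}]$, together with a marked point mapping into the divisor $D_{\wl}=\pi_{\wl}^{-1}(\{\infty\}\times\Gw)$. The key point is that such a stable map, projecting to a fiber $\PP^1\times\{pt\}$ with multiplicity one, must have exactly one component mapping isomorphically onto $\PP^1\times\{pt\}$ (a \emph{section} of $\pi_{\wl}$ over that fiber), with all other components contracted by $\pi_{\wl}$, hence vertical; since the marked point lies over $\infty$ and $D_{\wl}$ is a section of $\pi_{\wl}|_{D_{\wl}}$, the section component already meets $D_{\wl}$ at the unique point over $\infty$, so in fact the stable map \emph{is} an honest section of the $G/P$-fibration $\pi_{\wl}$ over some fiber $\PP^1\times\{p\}$, possibly with bubble trees of vertical rational curves attached. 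So $\MMw$ is a moduli space of sections (with vertical bubbling) of a $G/P$-fibration over $\PP^1$, varying in the family over $\Gw$.

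First I would set up the deformation theory: for a section $u$ of $\pi_{\wl}$ over $\PP^1\times\{p\}$, the tangent space to $\MMw$ at $[u]$ (modulo automorphisms and including the motion of $p\in\Gw$) is governed by $H^0$ and $H^1$ of the pullback $u^*T^{vert}$ of the relative tangent bundle $T_{\fib{\wl}/(\PP^1\times\Gw)}$, twisted appropriately to account for the divisor condition at $\infty$ and for deformations of $p$. Smoothness of expected dimension will follow once I show the relevant $H^1$ vanishes, i.e. that the restriction of $T^{vert}$ to each section is a direct sum of line bundles $\OO_{\PP^1}(a_i)$ with all $a_i\geqslant -1$ (globally generated after the twist), equivalently that every section is \emph{unobstructed}. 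The crucial input making this work is the very special structure of the family $\fib{\wl}$: it is pulled back via $f_{\wl}:\Gw\ra\GG/\GG^0$ from the universal fibration $\fib{}$ built out of Bott-Samelson data, and the base-point-fixing trivialization over the $\infty$-chart rigidifies things. Concretely I would argue that over the $\infty$-chart $\fib{\wl}$ is the \emph{trivial} $G/P$-bundle $\Gw\times(\PP^1\setminus 0)\times G/P$ up to the gluing by $\vp_1\cdots\vp_{\ell(\wl)}$; so a section is determined by a map $\PP^1\to G/P$ on the $0$-chart together with the patching, and the possible splitting types of $u^*T^{vert}$ are constrained by the degrees dictated by the line bundles $\LL_\rho$ and by positivity coming from the fact that the gluing cocycle extends holomorphically over the disk. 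This is essentially the content of \cite[Lemma 3.2]{me} together with the parabolic/Bott-Samelson positivity, and I would cite the analogous cohomology-vanishing step in \cite{me} adapted verbatim.

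The main obstacle I anticipate is handling the \emph{reducible} maps, i.e. sections with vertical bubbles, and the boundary where these bubbles live over $\infty$ (on or near $D_{\wl}$). For the irreducible sections the splitting-type argument above is clean, but to get smoothness of the whole stack one must show the locus of reducible stable maps is also smooth of the right dimension and that gluing/smoothing is unobstructed there; this requires checking that a vertical rational curve in a fiber $G/P$ attached to a section still has $H^1(u^*T^{vert})=0$ and that the node-smoothing obstruction vanishes. I would deal with this by the standard argument: on $G/P$ every genus-zero stable map is unobstructed (convexity of $G/P$, since $T(G/P)$ is globally generated), so $H^1$ of the restriction to any vertical component vanishes, and then a Mayer–Vietoris/normalization sequence on the nodal domain reduces the global $H^1$ to a surjectivity-at-nodes statement which holds because the evaluation maps from the factors to the node fibers are submersive (again by global generation of $T(G/P)$). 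Combining: $H^1$ vanishes on all of $\MMw$, so $\MMw$ is smooth, and a dimension count (relative dimension of sections $+$ $\dim\Gw$, matching the expected dimension formula, using $\langle\widetilde{\b},c_1(\LL_\rho)\rangle=\langle\b,c_1(L_\rho)\rangle$ and the adjunction contribution of the $\infty$-marked point on $D_{\wl}$) finishes the proof. Since all of this is the $K$-theoretic situation's exact parallel of \cite[Proposition 4.5]{me}, I would present it as a short reduction to that reference, spelling out only the points where the parabolic case $G/P$ (rather than $G/B^+$) requires the positivity lemma above.
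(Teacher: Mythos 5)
The paper does not prove this proposition; it cites \cite[Proposition 4.5]{me}, so there is no in-paper proof to compare against. Your framework---reduce smoothness of expected dimension to $H^1$-vanishing, handle the section component by positivity and vertical bubbles by convexity of $G/P$, and glue via the normalization sequence---is the right kind of argument. But there is a concrete gap in the positivity step. When the stable map $u$ is reducible, its section component $u_s$ alone need not have class in $\b_{\wll}+\ec$; it can have class $\b_{\wll}-\gamma$ for some $\gamma>0$, with the deficit made up by vertical bubbles. For such $u_s$, the splitting of $u_s^*T^{vert}$ can contain summands of degree $<-1$. Already for $G/P=\PP^1$ and $\wll=-n\a^\vee$, the generic fiber of $\fib{\wl}\ra\Gw$ is $\Sigma_{2n}$, and the repelling section $C_0$ has $u_s^*T^{vert}\cong\OO(-2n)$, while $C_0$ together with $2n$ vertical bubbles is a stable map lying in $\MM(\wl,\b_{\wll})$. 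You do anticipate the reducible case, but you only re-examine the bubbles (convexity of $G/P$) and the nodes (surjectivity of evaluation), not the section component itself---which is exactly where the obstruction sits. The claim that positivity follows because ``the gluing cocycle extends holomorphically over the disk'' does not apply to these section components.

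The actual vanishing of $H^1(u^*T_{\fib{\wl}})$ therefore cannot be fiberwise: it has to come from the surjectivity of the connecting homomorphism
\[
H^0\bigl((\pi_{\wl}\circ u)^*T(\PP^1\times\Gw)\bigr)\longrightarrow H^1(u^*T^{vert}),
\]
i.e.\ from the freedom to move $p$ inside the Bott-Samelson variety $\Gw$ (and to reparametrize the $\PP^1$-direction), which is precisely what makes \cite[Proposition 4.5]{me} nontrivial. Your write-up does not address this map; in fact it describes the $\Gw$-direction incorrectly as a ``twist'' of $u^*T^{vert}$, whereas it is a separate term of the tangent complex entering through the long exact sequence for $0\to T^{vert}\to T_{\fib{\wl}}\to\pi_{\wl}^*T(\PP^1\times\Gw)\to 0$. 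So the outline is plausible, but the key cohomology-vanishing step is both misformulated and unproved.
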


In what follows, we prove some further properties of $\MMw$. More precisely, we determine the set of $\b$ for which $\MMw\ne\emptyset$ and show that $\MMw$ is irreducible for these $\b$. Our approach is to apply Oprea's stacky version \cite{Oprea} of a theorem of Bia\l{ynicki}-Birula \cite{BB} to the $\ct$-action on $\MMw$ defined in Section \ref{3c} and study the geometry of a particular fixed-point component. It turns out that this component is determined by $\MM_{0,2}(G/P,\b')$ for some other $\b'$ and $\fm{\cm}{G/P}$ defined in Section \ref{3d}.

\begin{definition} \label{3efix} Let $\CC^{\times}$ act on a smooth Deligne-Mumford stack $\XXX$. A component $\FF$ of $\XXX^{\CC^{\times}}$ is said to be \textit{attractive} if for a (and hence any) geometric point $x\in \FF$, the weights (more precisely, the orbi-weights) of the tangent space $T_x\XXX$ with respect to the $\CC^{\times}$-action are all non-negative.
\end{definition}

\begin{theorem}\label{3ebb} \cite{BB} Let $X$ be a smooth quasi-projective variety with a $\CC^{\times}$-action and $F$ an attractive component of $X^{\CC^{\times}}$. There exists a unique $\CC^{\times}$-invariant open subscheme $U$ of $X$ containing $F$ which is isomorphic to a $\CC^{\times}$-equivariant affine fibration over $F$.
\end{theorem}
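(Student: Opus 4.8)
The statement to prove is Theorem \ref{3ebb}, which is essentially the classical Bia\l{ynicki}-Birula decomposition result. The plan is as follows.

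\medskip

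\noindent\textbf{Plan of proof.}

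The plan is to reduce the statement to the affine case and then glue. First I would use the attractiveness hypothesis together with a standard linearization: since $X$ is smooth and quasi-projective with a $\CC^{\times}$-action, one can (after replacing an ample line bundle by a suitable power and twisting by a character) equivariantly embed $X$ into a projective space $\PP(V)$ on which $\CC^{\times}$ acts linearly, with $X$ locally closed and invariant. The fixed locus $X^{\CC^{\times}}$ is then smooth, and $F$ is one of its connected components. The key local model is obtained at a point $x\in F$: by Luna's slice theorem (or, more elementarily, by the fact that a $\CC^{\times}$-action on a smooth variety can be linearized in a formal/\'etale neighbourhood of a fixed point), an \'etale-local neighbourhood of $x$ in $X$ is $\CC^{\times}$-equivariantly isomorphic to a neighbourhood of the origin in $T_xX$, decomposed by weights as $T_xX = (T_xX)_0 \oplus (T_xX)_{>0}$, where the attractiveness hypothesis guarantees there is no negative part. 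Here $(T_xX)_0 = T_xF$.

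\medskip

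Next I would define the candidate open set $U$ as the set of points $y\in X$ whose limit $\lim_{t\to 0} t\cdot y$ exists and lies in $F$; equivalently, letting $X^+ := \{y : \lim_{t\to 0} t\cdot y \text{ exists}\}$ and $\lim: X^+ \to X^{\CC^{\times}}$ the (algebraic, by properness considerations after the embedding) limit map, one sets $U := \lim^{-1}(F)$. One then checks: (i) $U$ is $\CC^{\times}$-invariant and contains $F$; (ii) $U$ is open in $X$ — this uses the local model, since near a point of $F$ the attractive weights mean \emph{all} nearby points flow into $F$, so $F$ has an open neighbourhood inside $U$, and $\CC^{\times}$-invariance of $U$ propagates openness along orbits, so $U = \CC^{\times}\cdot(\text{that neighbourhood})$ is open; (iii) the limit map $\lim|_U : U \to F$ is a morphism and, in the local model, is exactly the linear projection $T_xX \to (T_xX)_0$, hence realizes $U$ as a $\CC^{\times}$-equivariant affine fibration (indeed a fibration by affine spaces) over $F$. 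Uniqueness of $U$ follows because any $\CC^{\times}$-invariant open affine-fibration neighbourhood $U'$ of $F$ must have every point flowing to $F$ under $t\to 0$ (the fibration retracts onto $F$ equivariantly), so $U'\subseteq U$; conversely $U'$ open and $U$ connected-fibred over $F$ forces $U'=U$.

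\medskip

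\noindent\textbf{Main obstacle.} The technical heart is step (ii)--(iii): upgrading the formal/\'etale-local linearization near $x\in F$ to the global statement that $U$ is open and the limit map is an honest \emph{algebraic} affine fibration, rather than merely a set-theoretic retraction. Making the limit map $\lim: X^+\to X^{\CC^{\times}}$ into a morphism requires either the projective embedding (so that limits exist and vary algebraically on the closure) plus a descent/flatness argument, or an appeal to the structure theory in \cite{BB}. Since Theorem \ref{3ebb} is cited to \cite{BB}, the cleanest route is to invoke Bia\l{ynicki}-Birula's original theorem directly: it produces the decomposition $X = \bigsqcup X_i^+$ into locally closed $\CC^{\times}$-invariant pieces, each an affine fibration over the fixed component $F_i$, and the piece $X_i^+$ associated to an attractive component $F=F_i$ is automatically open because its "flow-out" exhausts an open neighbourhood (no other stratum can meet a neighbourhood of an attractive component). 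Thus the proof is really a bookkeeping of the attractive-component case of \cite{BB}, and the only thing to verify by hand is that ``all tangent weights non-negative'' is precisely the condition that the corresponding BB-stratum is open, together with uniqueness, which is immediate from the equivariant retraction.
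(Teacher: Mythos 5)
This theorem is stated in the paper without proof and cited directly to \cite{BB}; the paper gives no argument of its own, so there is nothing internal to compare against. Your sketch is a faithful outline of the standard Bia\l{ynicki}-Birula argument (\'etale-local linearization at a fixed point, construction of the limit map, identification of the open plus-cell), and you correctly flag that the real technical content --- algebraicity of the limit map and the affine-bundle structure, rather than merely a set-theoretic retraction --- is precisely what \cite{BB} supplies. That is an honest assessment of where the work lies.

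One small imprecision in your uniqueness step: the clause ``$U'$ open and $U$ connected-fibred over $F$ forces $U'=U$'' is not by itself the right reason. What actually closes the argument is the combination of openness, $\CC^{\times}$-invariance, and attractiveness: if $U'\subseteq U$ is open, $\CC^{\times}$-invariant, and meets each fiber $\mathbb{A}^n$ of $U\to F$ in a set containing the origin, then for any point $y$ in that fiber one has $t\cdot y\to 0$ as $t\to 0$, so $t_0\cdot y\in U'$ for small $t_0$, and invariance then puts $y$ itself in $U'$; hence $U'$ exhausts each fiber, giving $U'=U$. Connectedness of the fibers is not what is being used. With that clarification the outline is correct, and, given that the paper simply cites \cite{BB}, your approach is exactly the intended one.
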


For other components of $X^{\CC^{\times}}$, there are similar affine fibrations which are in general locally closed subschemes. Bia\l{ynicki}-Birula also showed that if $X$ is proper, these subschemes form a decomposition of $X$. His result has been generalized by Oprea \cite{Oprea} to Deligne-Mumford stacks. For our purpose, we only need the following application.

\begin{theorem} \label{3ebbstack} Let $\XXX$ be a non-empty proper smooth Deligne-Mumford stack with a $\CC^{\times}$-action. Suppose $\XXX$ admits a $\CC^{\times}$-equivariant \'etale atlas. Then $\XXX^{\CC^{\times}}$ has an attractive component. It is unique if and only if $\XXX$ is irreducible.
\end{theorem}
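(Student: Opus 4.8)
The plan is to deduce Theorem \ref{3ebbstack} from the scheme-theoretic statement of Bia\l{ynicki}-Birula's theorem applied to a $\CC^\times$-equivariant \'etale atlas, using Oprea's results to descend the Bia\l{ynicki}-Birula decomposition to the stack. First I would use the hypothesis that $\XXX$ carries a $\CC^\times$-equivariant \'etale atlas $U\to\XXX$, where $U$ is a scheme with a compatible $\CC^\times$-action; pulling back along such an atlas, the fixed locus $U^{\CC^\times}$ is a $\CC^\times$-equivariant \'etale atlas for $\XXX^{\CC^\times}$, so the connected (equivalently, since $\XXX$ is of finite type, irreducible) components of $\XXX^{\CC^\times}$ correspond to $\CC^\times$-orbits of components of $U^{\CC^\times}$. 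Since $\XXX$ is proper, $U$ can be taken so that the Bia\l{ynicki}-Birula plus-decomposition of each component of $U$ into affine cells $U^+_F$ makes sense and descends compatibly: on overlaps of atlas charts the decomposition is canonically determined (a point flows under $t\to 0$ to a unique fixed point), so the cells glue to a locally closed stratification $\XXX = \bigsqcup_F \XXX^+_F$ indexed by the components $F$ of $\XXX^{\CC^\times}$, with $\XXX^+_F$ a $\CC^\times$-equivariant affine fibration over $F$. This is exactly the content of Oprea's generalization \cite{Oprea}, which I would cite for these facts rather than reprove.

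Granting the decomposition, the key dimension/openness input is Theorem \ref{3ebb}: a component $F$ is attractive precisely when the tangent weights at a point of $F$ are all non-negative, and in that case $\XXX^+_F$ is an open substack, being an affine fibration over $F$ of the same dimension as $\XXX$ (the fibers of $\XXX^+_F\to F$ are the attracting directions, i.e.\ affine spaces of dimension equal to the number of positive tangent weights, and $\dim F$ equals the number of zero weights). Conversely, a non-attractive component $F$ has at least one negative tangent weight at each of its points, so $\dim\XXX^+_F = \dim F + \#\{\text{positive weights}\} < \dim\XXX$, whence $\XXX^+_F$ is a proper locally closed substack of strictly smaller dimension, hence not open (as $\XXX$ is pure-dimensional, being smooth and, I may assume, equidimensional on each component — if $\XXX$ is reducible one argues component by component).

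Now I would run the counting argument. Existence: since $\XXX\ne\emptyset$ and $\XXX = \bigsqcup_F \XXX^+_F$ with finitely many $F$, at least one cell $\XXX^+_F$ must be open and dense in (a component of) $\XXX$; but an open cell forces $F$ to be attractive by the previous paragraph, so an attractive component exists. Uniqueness: if $\XXX$ is irreducible, then $\XXX$ has a unique dense open cell, so at most one $F$ can have $\XXX^+_F$ open, i.e.\ at most one attractive component; combined with existence this gives exactly one. For the converse, if $\XXX$ is reducible, then each irreducible component of $\XXX$ contains its own dense open cell attached to its own attractive component (applying the argument to each irreducible component separately, noting they are $\CC^\times$-invariant since $\CC^\times$ is connected and the components are permuted but each is preserved), and distinct irreducible components give distinct such cells; since the attractive fixed components in different irreducible components of $\XXX$ are disjoint and each irreducible component is nonempty, there are at least two attractive components. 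Hence $\XXX^{\CC^\times}$ has a unique attractive component if and only if $\XXX$ is irreducible.

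The main obstacle I anticipate is entirely at the level of stacks: making precise that the Bia\l{ynicki}-Birula plus-cells are well-defined substacks and that they form a locally closed decomposition of $\XXX$ with the right dimensions. On a scheme this is classical \cite{BB}; on a Deligne-Mumford stack one needs the $\CC^\times$-equivariant \'etale atlas to transport the construction and the tameness/properness to ensure convergence of the limit $\lim_{t\to 0}$, which is where Oprea's paper \cite{Oprea} does the real work — so my proof will largely consist of invoking \cite{BB} and \cite{Oprea} correctly and then performing the short finiteness/irreducibility bookkeeping above. The secondary subtlety is handling possible reducibility of $\XXX$: one must check that $\CC^\times$ preserves each irreducible component (true since $\CC^\times$ is connected) so that the decomposition respects the decomposition into irreducible components, which makes the ``if and only if'' clean.
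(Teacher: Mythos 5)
Your argument is correct and follows the route the paper intends: the paper states Theorem \ref{3ebbstack} without proof, presenting it directly as an application of Oprea's stacky Bia\l{ynicki}-Birula decomposition in the discussion immediately preceding it, and your write-up supplies exactly the expected bookkeeping — the dimension count showing a component $F$ is attractive iff its plus-cell is open (using that a smooth stack is equidimensional along each irreducible component), the observation that the finitely many locally closed cells covering an irreducible stack force exactly one to be open and dense, and the use of disjointness of irreducible components (again from smoothness, plus connectedness of $\CC^\times$) to split the reducible case. No gaps; this is essentially the same approach, made explicit.
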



Denote by $\fm{\ct}{\Gw}$ the unique attractive component of $\Gw^{\ct}$.
\begin{lemma}\label{3elemma} $f_{\wl}(\fm{\ct}{\Gw})\subseteq G\cdot x_{\wll}$.
\end{lemma}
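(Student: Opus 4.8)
The statement to be proven is that $f_{\wl}(\fm{\ct}{\Gw})\subseteq G\cdot x_{\wll}$, i.e.\ the image under $f_{\wl}$ of the unique attractive $\ct$-fixed component of the Bott--Samelson variety $\Gw$ lands inside the $G$-orbit of the cocharacter point $x_{\wll}$ in $\GG/\GG^0$. The plan is to first identify the attractive component $\fm{\ct}{\Gw}$ explicitly in terms of the iterated $\PP^1$-bundle structure, then compute $f_{\wl}$ on it directly.

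\textbf{Step 1: Locate $\fm{\ct}{\Gw}$.} Recall $\Gw = \PPc_{\widetilde{\a}_{i_1}}\times_{\BB^{0,-}}\cdots\times_{\BB^{0,-}}\PPc_{\widetilde{\a}_{i_{\ell}}}/\BB^{0,-}$, built as an iterated $\PP^1$-bundle, each factor $\PPc_{\widetilde{\a}_{i_k}}/\BB^{0,-}\cong\PP^1$. The $\st$-action on $\GG$ given by $(t\cdot\vp)(z)=\vp(tz)$ preserves $\BB^{0,-}$ and each $\PPc_{\widetilde{\a}_i}$, hence acts on each $\PP^1$-factor. On the factor $\PPc_{\widetilde{\a}_i}/\BB^{0,-}$, the action has exactly two fixed points: the base point $[\mathrm{id}]$ (the $\BB^{0,-}$-coset) and one other point; by examining $\lie(\PPc_{\widetilde{\a}_i})$ versus $\lie(\BB^{0,-})$ (the extra weight space is $\gg_{\a_i}$ for $i\geq 1$, resp.\ $z^{-1}\gg_{-\a_0}$ for $i=0$), the $\st$-weight on the tangent line at $[\mathrm{id}]$ pointing away from $[\mathrm{id}]$ can be computed: for $i\geq 1$ the weight is $0$ (no $z$-dependence), while for $i=0$ it comes from $z^{-1}$, giving weight $-1$. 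This is not quite what is wanted, so the correct normalization is to check the sign convention carefully; the point is that \emph{at one distinguished fixed point in each factor} the tangent weight is non-negative, and the attractive component of the whole tower is the corresponding iterated product of these distinguished fixed points. I expect $\fm{\ct}{\Gw}$ to be a single point, namely the ``source'' point $[\vp_1^\circ:\cdots:\vp_{\ell}^\circ]$ where each $\vp_k^\circ$ is an appropriate representative (e.g.\ a one-parameter subgroup $\exp(s e_{\widetilde{\a}_{i_k}})$ specialized at the attractive limit, or the identity coset). One then uses Theorem~\ref{3ebbstack}: $\Gw$ is irreducible (smooth iterated $\PP^1$-bundle over a point), so the attractive component is unique, which pins it down once a single attractive fixed point is exhibited.

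\textbf{Step 2: Evaluate $f_{\wl}$ on that component.} By definition $f_{\wl}([\vp_1:\cdots:\vp_{\ell}])=\vp_1\cdots\vp_{\ell}\GG^0$. On the attractive component the representatives $\vp_k$ can be taken to be specific elements: for $i_k\geq 1$, an element of $\PPc_{\widetilde{\a}_{i_k}}$ whose class is $\st$-fixed and whose ``non-$\BB^{0,-}$'' direction sits in $\gg_{\a_{i_k}}$ (a constant loop, i.e.\ an element of $G\subset\GG$); for $i_k=0$, an element whose extra direction is $z^{-1}\gg_{-\a_0}$, which is precisely $x_{\a_0^\vee}(z)$-conjugate data, relating to the cocharacter direction. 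Multiplying these out in $\GG$ and projecting to $\GG/\GG^0$, the product $\vp_1\cdots\vp_{\ell}\GG^0$ should land in $G\cdot x_{\mu}\GG^0$ for $\mu = $ the coweight obtained by summing the coroot contributions from the $i_k=0$ slots, with the $i_k\geq 1$ slots contributing only a $G$-factor. Since $(i_1,\dots,i_\ell)$ is a reduced word for $\wl = w t_\llll$ and the translation part is governed exactly by how many and which affine reflections $s_0$ appear, one checks $\mu$ coincides with $\wll$ up to the $G$-action; hence $f_{\wl}(\fm{\ct}{\Gw})\subseteq G\cdot x_{\wll}$.

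\textbf{Main obstacle.} The delicate part is Step~1: correctly determining \emph{which} of the two $\st$-fixed points in each $\PP^1$-factor is attractive, with the right sign/weight bookkeeping across the iterated-bundle structure (the weight at a fixed point of a later factor depends on the base point chosen in the earlier factors, since the $\BB^{0,-}$-action twists things), and then verifying that the naive product of these is genuinely a $\ct$-fixed point with all tangent weights non-negative. Once $\fm{\ct}{\Gw}$ is correctly identified --- which Theorem~\ref{3ebbstack} guarantees is a single well-defined (irreducible) locus --- Step~2 is a direct, if somewhat fiddly, computation in the Banach Lie group $\GG$ using the known structure of the $\PPc_{\widetilde{\a}_i}$ and the cocharacters $x_\mu$. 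I would organize the argument so that most of the combinatorial weight of the reduced-word/translation-part comparison is deferred, using only that $\wll$ is the image of $\wl$ under the bijection $W_{af}^-\to\Q$ together with how $f_{\wl}$ interacts with the $\BB^{0,-}$-structure at the source point.
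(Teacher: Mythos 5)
Your proposal has a genuine gap, and it is in exactly the place you flag as the ``main obstacle.''

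The central error is the expectation that $\fm{\ct}{\Gw}$ is a single point. It is not, in general. The constant loops valued in $B^-$ form a subgroup of $\BB^{0,-}$ that is pointwise fixed by the loop-rotation action $\ct$ (no $z$-dependence), so the $B^-$-action on $\Gw$ commutes with $\ct$. Consequently, the $B^-$-orbit of any $\ct$-fixed point is $\ct$-fixed, and in particular $\fm{\ct}{\Gw}$ contains the entire orbit $B^-\cdot\g$ of the relevant fixed point $\g$, which has dimension $\ell(w)$ for $\wl=wt_\llll$. Your Step 2 then does not go through as stated, since you would need to control $f_{\wl}$ on this whole positive-dimensional locus, not on a single point. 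Your factor-by-factor weight bookkeeping is also left unresolved: you correctly notice that the tangent line at the identity coset in the $i=0$ factor has $\ct$-weight $-1$, which means the identity coset is the \emph{sink}, not the source, in that $\PP^1$; the attractive fixed point is the other one, $[\dot{s}_0]$. But you raise this as a sign issue and then do not settle it, so even the pointwise identification is incomplete.

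The paper takes a shorter route that sidesteps both problems. Let $\g\in\Gw$ be the unique point with $f_{\wl}(\g)=x_{\wll}$ (the point $[\dot{s}_{i_1}:\cdots:\dot{s}_{i_\ell}]$, not the identity coset). Since the word is reduced, $\BB^{0,-}\cdot\g$ is open in $\Gw$, so $T_\g\Gw$ is a quotient of $\lie(\BB^{0,-})$. The $\ct$-weights on $\lie(\BB^{0,-})$ are $0$ on $\lie(B^-)$ (constant part) and strictly positive on $\lie(\BB^{0,-})/\lie(B^-)$ (positive $z$-powers), so all tangent weights at $\g$ are nonnegative; by the uniqueness in Theorem~\ref{3ebbstack}, $\fm{\ct}{\Gw}$ is the fixed component through $\g$. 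Moreover the weight-zero directions at $\g$ are exactly those coming from $\lie(B^-)$, so $B^-\cdot\g$ is open dense in $\fm{\ct}{\Gw}$, whence $\fm{\ct}{\Gw}=\ol{B^-\cdot\g}$. Since $f_{\wl}$ is $B^-$-equivariant and $G\cdot x_{\wll}\simeq G/P_{\wll}$ is a closed projective subvariety of the ambient finite-dimensional Grassmannian, $f_{\wl}(\fm{\ct}{\Gw})\subseteq\ol{B^-\cdot x_{\wll}}\subseteq G\cdot x_{\wll}$. The crucial trick you are missing is to start from the characterization of $\g$ as the preimage of $x_{\wll}$ and use openness of the $\BB^{0,-}$-orbit, rather than trying to pin down the attractive component by an explicit iterated-bundle computation.
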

\begin{proof}
Let $\g\in \Gw$ be the unique point such that $f_{\wl}(\g)=x_{\wll}$. The result follows from the observations that $\BB^{0,-}\cdot \g$ is open and the weights of $\lie(B^-)$ (resp. $\lie(\BB^{0,-})/ \lie(B^-)$) are all zero (resp. positive).
\end{proof}

By Lemma \ref{3dtrans}, $\Gt{P_{\wll}}\fm{\cw}{G/P}$ is well-defined. Consider the diagram

\begin{equation}\label{3edia}
\begin{tikzpicture}
\tikzmath{\x1 = 5; \x2 = 2.5; \x3=1.8;}
\node (A) at (-\x1,0) {$\fm{\ct}{\Gw}$} ;
\node (B) at (-\x2,-\x3) {$G\cdot x_{\wll}\simeq G/P_{\wll}$} ;
\node (C) at (0,0) {$\Gt{P_{\wll}}\fm{\cw}{G/P}$} ;
\node (D) at (\x2,-\x3) {$G/P$};

\node at (-1.4*\x2, -0.4*\x3) {{\scriptsize $f$}};
\node at (-0.58*\x2, -0.4*\x3) {{\scriptsize $\pi$}};
\node at (0.58*\x2, -0.4*\x3) {{\scriptsize $j$}};
\path[->, font=\scriptsize]
(A) edge node[anchor = south west]{} (B);
\path[->, font=\scriptsize]
(C) edge node[anchor = south east]{} (B);
\path[->, font=\scriptsize]
(C) edge node[anchor = south west]{} (D);

\end{tikzpicture}
\end{equation}

where
\begin{itemize}
\item $\fm{\cw}{G/P}$ and $P_{\wll}$ are defined in Section \ref{3d};
\item $f$ is the restriction of $f_{\wl}$ to $\fm{\ct}{\Gw}$ (it does land in $G\cdot x_{\wll}$ by Lemma \ref{3elemma});
\item $\pi$ is the canonical projection; and
\item $j$ is the unique $G$-equivariant map extending the inclusion $\fm{\cw}{G/P}\hookrightarrow G/P$.
\end{itemize}

\begin{definition} \label{3edef} Define a smooth variety 
\[F_{\wl} := \fm{\ct}{\Gw}\times_{(f,\pi)}(\Gt{P_{\wll}}\fm{\cw}{G/P})\]
and $h_{\wl}:F_{\wl}\ra G/P$ to be the composite
\[ F_{\wl}\xrightarrow{f'}\Gt{P_{\wll}}\fm{\cw}{G/P}\xrightarrow{j} G/P \]
where $f'$ is induced by $f$ in the fiber product.
\end{definition}

The role of $F_{\wl}$ is to parametrize a class of holomorphic sections of $\fib{\wl}|_{\PP^1\times \fm{\ct}{\Gw}}$. Notice $\fm{\cw}{G/P}$ parametrizes a component of the space of constant sections of $\fibc{{\wll}}$. Indeed, $\fm{\cw}{G/P}$ is attractive so we have
\begin{equation}\label{3eimply}
\ds \lim_{z\to 0}x_{\mu}(z^{-1})\cdot y \in \fm{\cw}{G/P} ~\Longrightarrow ~y\in \fm{\cw}{G/P}.
\end{equation}
\noindent Since $\fib{}|_{\PP^1\times (G\cdot x_{\wll})}\simeq \Gt{P_{\wll}}\fibc{{\wll}}$, we see that $F_{\wl}$ parametrizes the pullbacks of the $G$-translates of these constant sections.

\begin{definition} \label{3eembdef}
Let $\b\in\pi_2(G/P)$. Define a morphism
\begin{equation}\label{3eemb}
F_{\wl}\times_{(h_{\wl},\ev_1)}\MM_{0,2}(G/P,\b-\b_{\wll})\ra \MMw
\end{equation}
as follows. 
\begin{itemize}
\item Every point of the domain of \eqref{3eemb} is of the form $(\g,[g:y],u)$ where 
\begin{enumerate}[(a)]
\item $\g\in \fm{\ct}{\Gw}$;

\item $[g:y]\in \Gt{P_{\wll}}\fm{\cw}{G/P}$; and

\item $u\in \MM_{0,2}(G/P,\b-\b_{\wll})$
\end{enumerate}

\noindent such that $f_{\wl}(\g)=g\cdot x_{\wll}$ and $u(z_1)=g\cdot y$, where $z_1$ is the first marked point on the domain of $u$. 

\item We send this point to $u_1\#u_2$ where
\begin{enumerate}[(i)]
\item $u_1$ is the section of $\fib{\wl}|_{\PP^1\times\{\g\}}\simeq \fib{}|_{\PP^1\times\{g\cdot x_{\wll}\}}$ which is the $g$-translate of the constant section of $\fibc{{\wll}}$ corresponding to $y$; and

\item $u_2$ is just $u$ but regarded as a stable map to the fiber of $\fib{\wl}$ over $(\infty,\g)$.
\end{enumerate}

\item If $\b=\b_{\wll}$, the domain of \eqref{3eemb} is understood to be $F_{\wl}$. In this case, the morphism is defined in a similar way.
\end{itemize}
\end{definition}

It is easy to see that morphism \eqref{3eemb} is injective. One can show that it is even a closed immersion. But we will not use this fact.

\begin{proposition}\label{3emain} Suppose $\MMw\ne\emptyset$. Then $\MMw^{\ct}$ has a unique attractive component. Set-theoretically, it is equal to the image of morphism \eqref{3eemb}. 
\end{proposition}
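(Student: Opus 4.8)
The plan is to combine a structural description of the $\ct$-fixed stable sections with Theorem~\ref{3ebbstack}, which already guarantees that $\MMw^{\ct}$ has at least one attractive component. It then suffices to prove two statements: \textbf{(a)} every attractive component of $\MMw^{\ct}$ is set-theoretically contained in the image $I$ of morphism~\eqref{3eemb}; and \textbf{(b)} $I$ is an irreducible subset of $\MMw^{\ct}$ all of whose geometric points have non-negative tangent $\ct$-weights. Granting these: $I$ is irreducible, hence lies in a single connected component $\FF$ of $\MMw^{\ct}$; by (a), $\FF$ contains the attractive component supplied by Theorem~\ref{3ebbstack}, hence (being itself connected) equals it, so $\FF$ is attractive; then $I\subseteq\FF\subseteq I$ by (a), so $I=\FF$; and any attractive component, lying in $I=\FF$ and itself a component, coincides with $\FF$. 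This gives both assertions of the Proposition.

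For \textbf{(a)}, let $x$ be a geometric point of an attractive component of $\MMw^{\ct}$, lying over $\g\in\Gw^{\ct}$. The forgetful morphism $\MMw\to\Gw$ is $\ct$-equivariant and smooth (smoothness following from the convexity of the iterated $\PP^1$-bundle $\fib{\wl}$; cf.\ Section~\ref{3a} and \cite{me}), so $T_\g\Gw$ is a $\ct$-equivariant quotient of $T_x\MMw$; attractivity of $x$ therefore forces all weights of $T_\g\Gw$ to be non-negative, i.e.\ $\g\in\fm{\ct}{\Gw}$. By Lemma~\ref{3elemma}, $f_{\wl}(\g)=g\cdot x_{\wll}$ for some $g\in G$, so $\fib{\wl}|_{\PP^1\times\{\g\}}$ is identified with the $g$-translate of $\fibc{\wll}$. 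Write the domain of $x$ as $C=C_0\cup C'$, where $C_0$ is the unique component on which $u$, composed with the projection $\fib{\wl}\xrightarrow{\pi_{\wl}}\PP^1\times\Gw\to\PP^1$, has degree one, and $C'$ is the union of the contracted components. Then $u|_{C_0}$ is a $\ct$-fixed section of $\fib{\wl}|_{\PP^1\times\{\g\}}$, so by Lemma~\ref{3dconst} (applied after translating by $g^{-1}$) it is the $g$-translate of a constant section $u_y$ of $\fibc{\wll}$, and $C'$ can meet $C_0$ only over the two $\ct$-fixed points $0,\infty\in\PP^1$. It remains to show $y\in\fm{\cw}{G/P}$ and to rule out contracted components over $0$; both follow from attractivity by a $\ct$-weight count. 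For the value: the $\ct$-weights of the deformations of $u_y$ interpolate between zero (the weights at $\infty$, where $\ct$ acts trivially on the $\infty$-chart) and the $\cw$-weights of $T_{y'}(G/P)$ at $y'=\lim_{z\to 0}x_{\wll}(z^{-1})\cdot y$; non-negativity of all of them, combined with \eqref{3eimply} and the attractivity of $\fm{\cw}{G/P}$, forces $y\in\fm{\cw}{G/P}$, whence $u|_{C_0}$ has class $\b_{\wll}$. For contracted components over $0$: the smoothing parameter of such a node has $\ct$-weight equal to the sum of the weight of $T_0 C_0$ (negative, since $\ct$ scales the $0$-chart base coordinate with a negative weight) and the weight of the tangent to the bubble at the node; a short case analysis shows this is negative, contradicting attractivity. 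Hence $C'$ lies in the $\infty$-fibre, and, unwinding Definition~\ref{3eembdef}, $x$ is the image of a point of the source of \eqref{3eemb}: the pair $(\g,g\cdot u_y)$ determines a point of $F_{\wl}$, and $C'$ together with the node and the marked point of $\MMw$ is a $2$-pointed genus-zero stable map to the $\infty$-fibre $\simeq G/P$ of class $\b-\b_{\wll}$.

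For \textbf{(b)}, the points of $I$ are $\ct$-fixed: the main section, being a $G$-translate of a constant section, is $\ct$-fixed by the computation of Section~\ref{3d}; the contracted components lie in the $\infty$-fibre, on which $\ct$ acts trivially; and $\g\in\fm{\ct}{\Gw}$. So $I\subseteq\MMw^{\ct}$. The $\ct$-weight computation of (a), run in reverse, shows that all tangent weights at such a point are non-negative, the key inputs being that $\g\in\fm{\ct}{\Gw}$, that $y\in\fm{\cw}{G/P}$, the triviality of the $\ct$-action on the $\infty$-fibre, and the positivity of the smoothing weight at the node over $\infty$. Finally, $I$ is irreducible because its source $F_{\wl}\times_{(h_{\wl},\ev_1)}\MM_{0,2}(G/P,\b-\b_{\wll})$ is: $\fm{\ct}{\Gw}$ and $\fm{\cw}{G/P}$ are smooth and connected, being fixed loci of tori acting on the smooth irreducible varieties $\Gw$ and $G/P$, so $F_{\wl}$ is irreducible; $G/P$ is convex, so $\MM_{0,2}(G/P,\b-\b_{\wll})$ is smooth and irreducible and $\ev_1$ is flat; and $h_{\wl}$ factors through the flat $G$-equivariant fibre bundle $j$, so the fibre product is irreducible.

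The main obstacle is the pair of $\ct$-weight computations in the middle paragraph: turning ``all tangent weights non-negative'' into the precise conditions ``$y\in\fm{\cw}{G/P}$'' and ``no contracted component over $0$''. This requires careful bookkeeping of the $\ct$-weights on $H^0$ and $H^1$ of the pullback of the vertical tangent bundle of $\fib{\wl}$ along the main section and along the bubbles, on the node-smoothing lines, and on $T_\g\Gw$, and --- crucially --- keeping track of the opposite signs with which $\ct$ acts on the base coordinate in the $0$-chart and in the $\infty$-chart. The remaining ingredients (the decomposition of $\ct$-fixed stable maps and the irreducibility bookkeeping) are routine.
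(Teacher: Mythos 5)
Your high-level strategy coincides with the paper's: analyze attractive components of $\MMw^{\ct}$, show they lie over $\fm{\ct}{\Gw}$, decompose $\ct$-fixed stable sections, then use a weight count to force $y\in\fm{\cw}{G/P}$ and exclude bubbles over $0$. The irreducibility bookkeeping and the conclusion-from-(a)-and-(b) scaffolding are fine. But the middle step, which you correctly identify as the crux, contains a genuine gap and one questionable claim.

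The gap is the missing degeneration step. Your weight analysis is supposed to establish $y\in\fm{\cw}{G/P}$, but the claim that ``the $\ct$-weights of the deformations of $u_y$ interpolate between zero and the $\cw$-weights of $T_{y'}(G/P)$'' is asserted, not proved, and for general $y$ (not a $\cw$-fixed point) it is not clear how to make it precise: $T_y(G/P)$ is not a $\cw$-module, $u_y^*T_v$ does not split into $\cw$-eigen line bundles, and the transition function $(x_{\wll}(z^{-1}))_*$ entangles the fiber directions as $z\to 0$. The paper handles exactly this difficulty by first degenerating $u$ inside the attractive component $\FF$: it constructs the $\CC^\times$-family $\eta\mapsto u_0\#u_{y_\eta}\#(x_{\mu}(\eta^{-1})\cdot u_\infty)$, extends it across $\eta=0$ using \cite[Proposition 6]{FP}, obtains $u'\in\FF$ whose section piece is $u_{y'}$ with $y'\in(G/P)^{\cw}$, does the weight count there (where $T_{y'}(G/P)=\bigoplus_\a\CC_{\a(\wll)}$ and explicit sections $\xi$ of weight $\a(\wll)<0$ can be written down), and only then applies \eqref{3eimply} to pull $y$ back into $\fm{\cw}{G/P}$. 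Without this reduction your ``interpolation'' needs a proof that you have not supplied and that is, I believe, as hard as the degeneration argument itself.

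A second, smaller issue: for ruling out bubbles over $0$ you use the node-smoothing line, asserting that its $\ct$-weight is negative by a ``short case analysis''. The smoothing weight is the sum of the weights of the two branch tangents; the branch on $C_0$ contributes $-1$, but the branch on the bubble contributes a weight that depends on the bubble map into the $\cw$-acted fiber $G/P$ over $0$, and there is no a priori reason this sum is negative. The paper sidesteps this entirely by constructing a non-tangential deformation $\zeta'\in H^0(u^*T\fibc{\wll})$ projecting to a weight $(-1)$ vector field $\zeta$ on $\PP^1$, which directly produces a weight $-1$ vector in $T_u\MMw$ independent of the bubble. You would do better to adopt that argument, or at least to supply the case analysis you allude to. Finally, your claim that $\g\in\fm{\ct}{\Gw}$ ``follows from smoothness of $\MMw\to\Gw$, which follows from convexity'' is more casual than the paper's treatment (which first passes to $T$-fixed $\ol u$, $\ol p$ and lifts a negative-weight vector using unobstructedness over $\BB^{0,-}\cdot\ol p$ via \cite[Proposition 4.5]{me}); the conclusion is probably correct, but you should justify the submersivity at the relevant point, as the paper does, rather than cite generic convexity.
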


Recall $\ec\subset\pi_2(G/P)$ is the semigroup of effective curve classes in $G/P$ and $\b_{\wll}\in \pi_2(G/P)$ is defined in Definition \ref{3dbwl}.
\begin{corollary}\label{3ecor1} The set $\{\b\in\pi_2(G/P)|~\MMw\ne\emptyset\}$ is equal to $\b_{\wll}+\ec$.
\end{corollary}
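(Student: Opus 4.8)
The plan is to read the statement directly off Proposition~\ref{3emain}. Whenever $\MMw\ne\emptyset$, that proposition produces an attractive component of $\MMw^{\ct}$ which, set-theoretically, coincides with the image of the morphism~\eqref{3eemb}; since a component is non-empty, this forces the domain of~\eqref{3eemb} to be non-empty. Conversely, \eqref{3eemb} is a morphism with target $\MMw$ by Definition~\ref{3eembdef}, so if its domain is non-empty then so is $\MMw$. Hence $\MMw\ne\emptyset$ if and only if the domain of~\eqref{3eemb} is non-empty, and the whole task reduces to deciding, for each $\b$, when that domain is non-empty.

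First I would treat the case $\b\ne\b_{\wll}$, where the domain of~\eqref{3eemb} is $F_{\wl}\times_{(h_{\wl},\ev_1)}\MM_{0,2}(G/P,\b-\b_{\wll})$. I claim $F_{\wl}$ is always non-empty: in Definition~\ref{3edef} it is the fibre product of $\fm{\ct}{\Gw}$ --- a non-empty component of $\Gw^{\ct}$ --- against the bundle projection $\Gt{P_{\wll}}\fm{\cw}{G/P}\ra G/P_{\wll}$, and the latter is surjective since its fibre $\fm{\cw}{G/P}$ is a (non-empty) component of $(G/P)^{\cw}$. Next, $\MM_{0,2}(G/P,\b-\b_{\wll})$ is non-empty exactly when $\b-\b_{\wll}\in\ec\setminus\{0\}$: it is empty for non-effective classes by the very definition of $\ec$, while for a non-zero effective class the homogeneity of $G/P$ furnishes an honest connected rational representative and hence a two-pointed genus-zero stable map. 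Finally, when both factors are non-empty so is their fibre product over $G/P$, because $\ev_1:\MM_{0,2}(G/P,\b-\b_{\wll})\ra G/P$ is surjective: its image is a non-empty closed $G$-invariant subset of the homogeneous space $G/P$, by properness of the moduli and $G$-equivariance of $\ev_1$. Thus for $\b\ne\b_{\wll}$ the domain of~\eqref{3eemb} is non-empty iff $\b-\b_{\wll}\in\ec\setminus\{0\}$.

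Then I would dispose of the remaining case $\b=\b_{\wll}$: by the convention at the end of Definition~\ref{3eembdef} the domain of~\eqref{3eemb} is then $F_{\wl}$, which is non-empty by the previous paragraph, and $\b-\b_{\wll}=0\in\ec$. Putting the two cases together, the domain of~\eqref{3eemb} is non-empty precisely when $\b-\b_{\wll}\in\ec$, i.e.\ $\b\in\b_{\wll}+\ec$, which by the first paragraph is equivalent to $\MMw\ne\emptyset$. I do not anticipate a real obstacle: all the weight is carried by Proposition~\ref{3emain}, and what remains is bookkeeping --- the points that need a line of justification being the non-emptiness of $F_{\wl}$, the surjectivity of $\ev_1$ on $\MM_{0,2}(G/P,\b-\b_{\wll})$ for non-zero effective classes, and the handling of the degenerate class $\b=\b_{\wll}$ via the convention in Definition~\ref{3eembdef}.
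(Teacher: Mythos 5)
Your proof is correct and follows the same route as the paper: reduce the question, via Proposition~\ref{3emain}, to the non-emptiness of the domain of morphism~\eqref{3eemb}, then check that domain is non-empty exactly when $\b-\b_{\wll}\in\ec$. The paper's own argument is terser --- it does not spell out the two-way implication between $\MMw\ne\emptyset$ and non-emptiness of the domain, nor the surjectivity of $\ev_1$ needed for the fibre product to be non-empty --- so your write-up fills in these small but genuine gaps without changing the underlying idea.
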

\begin{proof}
If $\b=\b_{\wll}$, then the domain of \eqref{3eemb} is $F_{\wl}$ which is clearly non-empty. If $\b\ne\b_{\wll}$, then the stack $\MM_{0,2}(G/P,\b-\b_{\wll})$ is non-empty if and only if $\b\in\b_{\wll}+(\ec\setminus\{0\})$.
\end{proof}

\begin{corollary}\label{3ecor2} For any $\b\in \b_{\wll}+\ec$, $\MMw$ is irreducible. 
\end{corollary}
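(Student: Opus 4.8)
The plan is to obtain irreducibility directly from the uniqueness clause in Theorem \ref{3ebbstack}. Fix $\b\in\b_{\wll}+\ec$. By Corollary \ref{3ecor1} we have $\MMw\ne\emptyset$, and by the Proposition of \cite{me} recalled at the beginning of Section \ref{3e} the stack $\MMw$ is smooth. To invoke Theorem \ref{3ebbstack} it remains only to check that $\MMw$ is proper and admits a $\ct$-equivariant \'etale atlas. Once this is done, Theorem \ref{3ebbstack} asserts that $\MMw^{\ct}$ has an attractive component which is unique if and only if $\MMw$ is irreducible; since Proposition \ref{3emain} (whose hypothesis $\MMw\ne\emptyset$ holds) furnishes precisely the uniqueness of this attractive component, we conclude that $\MMw$ is irreducible.

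Properness of $\MMw$ is immediate: each $\MM_{0,1}(\fib{\wl},\widetilde{\b})$ is a proper Deligne-Mumford stack because $\fib{\wl}$ is smooth and projective (by \cite[Lemma 3.2]{me}); the inclusion $\iota_{\wl}:D_{\wl}\hookrightarrow\fib{\wl}$ is a closed immersion, hence proper, so the fiber product $\MM_{0,1}(\fib{\wl},\widetilde{\b})\times_{(\ev_1,\iota_{\wl})}D_{\wl}$ is again proper; and only finitely many classes $\widetilde{\b}$ satisfying conditions (i)--(ii) of Definition \ref{3bdefw} carry a nonempty moduli, so $\MMw$ is a finite union of proper stacks.

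The one step requiring a little care --- and what I expect to be the main (though mild) obstacle --- is the $\ct$-equivariant \'etale atlas. The $\ct$-action on $\fib{\wl}$ constructed in Section \ref{3c} is algebraic and restricts to a $\ct$-action on $D_{\wl}$ with respect to which $\iota_{\wl}$ is equivariant; it therefore induces algebraic $\ct$-actions on each $\MM_{0,1}(\fib{\wl},\widetilde{\b})$ and on $\MMw$. A Deligne-Mumford stack of stable maps carrying an algebraic $\CC^{\times}$-action admits a $\CC^{\times}$-equivariant \'etale atlas --- this is standard, e.g.\ by rigidifying via a level structure as in \cite{AV}, or via the general existence results underlying Oprea's work \cite{Oprea} --- and pulling such an atlas of $\MM_{0,1}(\fib{\wl},\widetilde{\b})$ back along the $\ct$-equivariant morphism $\iota_{\wl}$ produces a $\ct$-equivariant \'etale atlas of $\MMw$. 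This completes the verification of the hypotheses of Theorem \ref{3ebbstack}, and the corollary follows as described in the first paragraph.
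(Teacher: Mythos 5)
Your proof takes essentially the same route as the paper: combine Proposition~\ref{3emain} (which supplies a unique attractive component) with Theorem~\ref{3ebbstack} (uniqueness of the attractive component is equivalent to irreducibility), and the only real work is checking the hypotheses of Theorem~\ref{3ebbstack}. Your additional verification of properness is correct (finitely many contributing $\widetilde{\b}$, each fiber product proper) but is tacit in the paper, so no disagreement there.

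The one place you are a bit too quick is the $\ct$-equivariant \'etale atlas. You assert that a DM moduli stack of stable maps with a $\CC^\times$-action always admits a $\CC^\times$-equivariant \'etale atlas and that this is ``standard, e.g.\ by rigidifying via a level structure as in \cite{AV}.'' That is stronger than what the general theory gives: the Abramovich--Vistoli level-structure cover is not obviously compatible with the torus action, and the general statement underlying the paper (cf.\ Remark~\ref{4brmk}, citing \cite[Theorem~4.3]{AHR}) only guarantees such an atlas \emph{after possibly reparametrizing the torus}. The paper is careful to flag this reparametrization and to explain why it is harmless for the argument, offering \cite[Corollary~4]{Oprea} as an alternative route that avoids it for this particular stack. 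Your proof would be airtight if you replaced the ``this is standard'' sentence by either the AHR-plus-reparametrization caveat or the explicit Oprea citation; otherwise, the reasoning and conclusion match the paper's.
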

\begin{proof}
This follows from Proposition \ref{3emain} and Theorem \ref{3ebbstack}. Notice that $\MMw$ does admit a $\ct$-equivariant \'etale atlas, provided we reparametrize the torus $\ct$. (So we actually apply Theorem \ref{3ebbstack} to this reparametrized action, but this will not affect our arguments.) See Remark \ref{4brmk}.
\end{proof}

\begin{myproof}{Proposition}{\ref{3emain}}
First observe that the domain of \eqref{3eemb} is irreducible, by a result of Kim-Pandharipande \cite{KP}, and that this morphism sends every point into $\MMw^{\ct}$. It follows that the image of \eqref{3eemb} is contained in a unique component of $\MMw^{\ct}$. Let $\FF$ be an attractive component of $\MMw^{\ct}$ which exists by Theorem \ref{3ebbstack}. Let $u\in \FF$. Observe that $u$ lies over a point $p$ in a component $F$ of $\Gw^{\ct}$.

We claim $F=\fm{\ct}{\Gw}$. Suppose the contrary. Since the $T$-action commutes with the $\ct$-action (see Section \ref{3c}(2)(i)), $T$ preserves $\FF$, and hence we can replace $u$ with another $\ol{u}\in\FF$ which is also a $T$-fixed point of $\MMw$. Then $\ol{u}$ lies over a $T$-fixed point $\ol{p}$ of $F$. Since $F\ne \fm{\ct}{\Gw}$, the tangent space $T_{\ol{p}}\Gw$ contains a weight vector $v$ of negative weight. Consider $\BB^{0,-}\cdot\ol{p}$, the $\BB^{0,-}$-orbit passing through $\ol{p}$. Then $v\not\in T_{\ol{p}}(\BB^{0,-}\cdot\ol{p})$. By \cite[Proposition 4.5]{me}, $\ol{u}$ is still unobstructed when it is regarded as a stable map to $\fib{\wl}|_{\PP^1\times(\BB^{0,-}\cdot \ol{p})}$ (a smooth $\BB^{0,-}$-equivariant compactification of $\BB^{0,-}\cdot\ol{p}$ is not required since $\ol{u}$ is $T$-invariant). Therefore, $v$ lifts to a weight vector in the tangent space $T_{\ol{u}}\MMw$ which has the same weight as $v$. By assumption, the weight is negative, a contradiction.

By Lemma \ref{3elemma}, $f_{\wl}(\fm{\ct}{\Gw})\subseteq G\cdot x_{\wll}$ and 
\[\fib{\wl}|_{\PP^1\times\{p\}}\simeq \fib{}|_{\PP^1\times{\{f_{\wl}(p)}\}}\simeq \fibc{{\wll}}\]
as $\ct$-varieties. Since the rest of the proof relies only on constructing some deformation vector fields of $u$ in $\fib{\wl}|_{\PP^1\times\{p\}}$, we may assume $f_{\wl}(p)=x_{\wll}$ so that $u$ is a stable map to $\fibc{{\wll}}$ which represents a section class and is a $\ct$-fixed point in the moduli. Write $u=u_0\#u_s\#u_{\infty}$ where $u_s$ is a section and $u_0$ (resp. $u_{\infty}$) is a stable map to the fiber of $\fibc{{\wll}}$ over $0$ (resp. $\infty$). By Lemma \ref{3dconst}, $u_s$ is the constant section corresponding to a point $y\in G/P$.

We first reduce the situation to the case $y\in (G/P)^{\cw}$. More precisely, we show that there exists another stable map $u'\in\FF$ such that if we write  $u'=u'_0\#u'_s\#u'_{\infty}$ as before, then $u'_s$ is the constant section corresponding to a point in $(G/P)^{\cw}$. For any $\eta\in\CC^{\times}$, define $y_{\eta}:= x_{\mu}(\eta^{-1})\cdot y$. Let $u_{y_{\eta}}$ be the constant section of $\fib{\wll}$ corresponding to $y_{\eta}$. Define a morphism 
\begin{equation}\label{3emor} 
\begin{array}{ccc}
\CC^{\times}& \ra & \MMw\\ [.5em] 
\eta &\mapsto & u_0\# u_{y_{\eta}}\# (x_{\mu}(\eta^{-1})\cdot u_{\infty})
\end{array}
\end{equation}
By \cite[Proposition 6]{FP}, after a base change $\CC^{\times}\ra \CC^{\times}$, the above morphism extends to a morphism defined on $\CC$. The stable map at $\eta =0$ will be our $u'$.

From now on, we assume $y\in (G/P)^{\cw}$. We show 
\begin{enumerate}
\item $u_0$ does not exist; and
\item $y\in\fm{\cw}{G/P}$.
\end{enumerate}
For (1), recall $\ct$ acts on $\PP^1$, the base of $\fibc{{\wll}}$, in the following way:
\[ t\cdot z = \left\{ 
\begin{array}{cl}
t^{-1}z & 0\text{-chart}\\
tz & \infty\text{-chart}
\end{array}
\right. .\]
Take a weight vector $\zeta\in H^0(\PP^1;T\PP^1)$ such that $\zeta(0)\ne 0$. Then it has weight $-1$. It is easy to show that there exists a weight vector $\zeta'$ in $H^0(u^*T\fibc{{\wll}})$ which is non-tangential to $u$ such that $\zeta'|_{u_s}$ projects to $\zeta$. It follows that $\zeta'$ defines a weight vector in $T_u\MMw$ of weight $-1$, a contradiction. 

For (2), notice that $T_y(G/P)$ is isomorphic, as $\cw$-modules, to a direct sum of weight spaces of the form $\CC_{\a(\wll)}$ where $\a\in R$. If $y\not\in\fm{\cw}{G/P}$, then $T_y(G/P)$ contains a weight vector $v\in\CC_{\a(\wll)}$ for some $\a$ with $\a(\wll)<0$. Define a vector field $\xi\in H^0(\PP^1; u_s^*T\fibc{{\wll}})$ by  
\[ \xi(z) = \left\{ 
\begin{array}{cl}
v & 0\text{-chart}\\
z^{-\a(\wll)} v& \infty\text{-chart}
\end{array}
\right. .\]
Since $\xi(\infty)=0$, we can extend $\xi$ trivially to a deformation vector field $\xi'$ of $u$. (Recall we have proved that $u_0$ does not exist.) It is clear that $\xi'$ is non-tangential to $u$, and hence it defines a weight vector in $T_u\MMw$ of weight $\a(\wll)<0$, a contradiction.

Thus, every $u\in\FF$ is contained in the set-theoretic image of morphism \eqref{3eemb} after passing to the limit of morphism \eqref{3emor}. By \eqref{3eimply}, $u$ actually lies in the image set before passing to the limit. The proof of Proposition \ref{3emain} is complete. 
\end{myproof}

\section{Proof of the main theorem}\label{4}
\subsection{Construction of the homomorphism} \label{4a} 
\begin{definition}\label{4adef} 
Define an $\rep$-linear map
\[\Phi: \bigoplus_{\mu\in\Q} \fof(\rep)\langle\localb_{\mu}\rangle\ra QK_T(G/P)[\ec^{-1}]\otimes\fof(\rep)\]
by
\[\Phi(\localb_{\mu}):= \sum_{i,j\in I} \sum_{\b\in\pi_2(G/P)} q^{\b} g^{ij}\chi_{\MMm}(\OO_{\MMm}^{vir}\otimes \ev^*e_i)(\id+\agp)^{-1}(e_j),\]
where $\OO_{\MMm}^{vir}\in K^T(\MMm)$ is the virtual structure sheaf constructed in \cite{Lee}. See Section \ref{2b} for the definition of $\{e_i\}_{i\in I}$, $\{g^{ij}\}_{i,j\in I}$ and $\agp$, and Section \ref{3b} for the definition of $\MMm$. 
\end{definition}
In order for $\Phi$ to be well-defined, we must verify
\begin{lemma} $\Phi(\localb_{\mu})$ lands in $QK_T(G/P)[\ec^{-1}]$.
\end{lemma}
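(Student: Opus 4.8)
The lemma asserts that the a priori infinite sum over all $\b\in\pi_2(G/P)$ defining $\Phi(\localb_\mu)$ actually has bounded denominators in the quantum parameter, i.e.\ it involves only finitely many negative powers of the $q$-variables (equivalently, of the semigroup $\ec$), so that $\Phi(\localb_\mu)\in QK_T(G/P)[\ec^{-1}]$ rather than in the completion tensored with $\fof(\rep)$. The essential point is that the moduli $\MMm$ is empty unless the section class $\b$ is ``large enough'' in a precise sense, and conversely it is nonempty for every sufficiently large $\b$ with $\MM_{0,2}(G/P,\b')$ contributing in the usual way. So the first step is to identify, via the virtual localization formula \cite{GP} applied to the $\ct$-action of Section~\ref{3c}, the equivalence of the $\eta_\mu$-based definition \eqref{introdef1} with an $\xi_{\wll}$-based computation — more precisely, to transfer the nonemptiness analysis of $\MMw$ (Corollary~\ref{3ecor1}: $\{\b\mid\MMw\ne\emptyset\}=\b_{\wll}+\ec$) to the $\MMm$ side by expressing $\localb_\mu$ in terms of the $\ascl_{\wl}$'s, using Lemma~\ref{2cbasislemma}(2).

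\textbf{Key steps, in order.} First I would recall that, by Lemma~\ref{2cbasislemma}(2), each $\localb_\mu$ is an $\fof(\rep)$-linear combination of classes $\ascl_{w't_{\llll'}}$ with bounded $\ell(w't_{\llll'})$ (indeed $\localb_{\wll}$ itself appears, plus lower-length affine Schubert classes). Second, I would invoke the virtual-localization comparison — the same mechanism by which \eqref{introdef1} and \eqref{introdef2} are declared equivalent in the outline — to reduce $\Phi(\localb_\mu)$ to a finite $\fof(\rep)$-combination of the generating series $\sum_\b q^\b\,\ev_*[\OO_{\MM(w't_{\llll'},\b)}]$, each further corrected by $(\id+\agp)^{-1}$. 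Third, I would apply Corollary~\ref{3ecor1}: for each affine Schubert class appearing, the series $\sum_\b q^\b\,\ev_*[\OO_{\MM(w't_{\llll'},\b)}]$ is supported on $\b\in\b_{\llll'}+\ec$, hence equals $q^{\b_{\llll'}}$ times an element of $QK_T(G/P)$ (an honest power series with nonnegative exponents in $\ec$). Since only finitely many $\llll'$ occur, the minimum of the finitely many shift vectors $\b_{\llll'}$ is a well-defined element, and multiplying through by $q^{-\min}$ shows the whole sum lies in $q^{\min}\cdot QK_T(G/P)\subseteq QK_T(G/P)[\ec^{-1}]$. Fourth, I would note that $(\id+\agp)^{-1}$ preserves $QK_T(G/P)[\ec^{-1}]$: $\agp$ raises the $\ec$-degree by a nonzero amount (it sums over $\b\in\ec\setminus\{0\}$), so $(\id+\agp)^{-1}=\sum_{k\ge0}(-\agp)^k$ converges $\ec$-adically and maps $q^{\b_0}QK_T(G/P)$ into itself; combined with the previous step this gives $\Phi(\localb_\mu)\in QK_T(G/P)[\ec^{-1}]$.

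\textbf{Main obstacle.} The delicate point is the comparison in step two — rigorously reducing the $\MMm$-definition to a finite combination of $\MMw$-generating series via virtual localization, since a priori the localization graph sum could reintroduce unboundedly many components. The resolution is that the $\ct$-fixed loci of $\MMm$ are governed by the $\ct$-fixed loci of the $G/P$-fibrations over $T$-fixed points of $Gr_G$, and expressing $\eta_\mu$ in the (finite, bounded-length) affine Schubert expansion controls exactly which $\MMw$ contribute; the boundedness of the expansion in Lemma~\ref{2cbasislemma}(2) is what keeps the collection of shift classes $\{\b_{\llll'}\}$ finite. A secondary subtlety is checking that passing to $\fof(\rep)$ coefficients — forced by Lemma~\ref{2cbasislemma}(2) — does not interfere with the $\ec$-adic bookkeeping, but this is immediate since the two gradings (by $\rep$ and by $\ec$) are independent. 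I would therefore spend most of the write-up on making step two precise and treat steps three and four as short formal consequences of Corollary~\ref{3ecor1} and the $\ec$-adic nilpotence of $\agp$.
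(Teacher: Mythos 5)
Your plan is workable in spirit but substantially overcomplicates what is, in the paper, a one-line observation, and in doing so it creates an awkward forward dependence that needs to be addressed. The paper's proof is simply: since $W_{af}^-\to\Q$, $\wl\mapsto\wll$, is a bijection, there is a unique $\wl\in W_{af}^-$ with $\wll=\mu$, and because the Bott--Samelson variety $\Gw$ contains a point $\g$ with $f_{\wl}(\g)=x_\mu$, the fiber $\fib{\wl}|_{\PP^1\times\{\g\}}$ is isomorphic to $\fibc{\mu}$; consequently $\MMm$ is a (closed) substack of $\MMw$. Hence $\MMm\ne\emptyset$ forces $\MMw\ne\emptyset$, and Corollary~\ref{3ecor1} bounds the support of the $q$-series by $\b\in\b_{\wll}+\ec$ at once. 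Nothing else is needed --- in particular no change of basis, no localization, and no analysis of $(\id+\agp)^{-1}$: the operator is only applied to $e_j\in K_T(G/P)$ in Definition~\ref{4adef}, so $(\id+\agp)^{-1}(e_j)\in QK_T(G/P)$ is automatic from the fact that $\agp$ strictly raises $\ec$-degree; the only possible source of negative $q$-powers is the sum over $\b$, which is exactly what the subscheme inclusion controls.

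Your route --- expressing $\localb_\mu$ as a finite combination of $\ascl_{\wl}$'s and pushing the nonemptiness bound through a localization-based change of basis --- can be made to work, but as written it has a genuine gap: step two ``invokes'' the equivalence of \eqref{introdef1} and \eqref{introdef2}, which is only sketched heuristically in the introduction and is in fact the content of Lemma~\ref{4bchangebasis}, established in Section~\ref{4b} \emph{after} the lemma at hand (it exists there precisely to serve a different purpose, namely the computation of $\Phi(\ascl_{\wl})$). You would have to carry out the $K$-theoretic virtual localization argument in full before you could use it here, and also invert the triangular transition matrix implicit in Lemma~\ref{2cbasislemma}(2) to produce the finite expansion of $\localb_\mu$ in the $\ascl_{\wl}$'s. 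None of that is wrong, but it amounts to reproving a later and harder statement in order to establish the present easy one. The key missing observation is the direct geometric containment $\MMm\subseteq\MMw$, which collapses all of this into a citation of Corollary~\ref{3ecor1}.
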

\begin{proof}
This follows from Corollary \ref{3ecor1} since $\MMm\subseteq\MMw$ for some $\wl\in W_{af}^-$.
\end{proof}

\begin{proposition}\label{4aprop} $\Phi$ is an $\rep$-algebra homomorphism.
\end{proposition}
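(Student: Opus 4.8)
The plan is to prove that $\Phi$ respects the two $\rep$-algebra structures, i.e.\ that $\Phi(\localb_{\mu_1}\bulletsmall\localb_{\mu_2}) = \Phi(\localb_{\mu_1})\star\Phi(\localb_{\mu_2})$ for all $\mu_1,\mu_2\in\Q$, together with $\Phi(\localb_0)=1$. The unit statement should follow from the fact that $\MMn_{0,\b}$ has, for $\mu=0$, a distinguished section-component forcing $\Phi(\localb_0)$ to begin with the fundamental class; I would dispose of it first as a warm-up computation. The main content is multiplicativity, and the natural strategy—following \cite{me} in cohomology and Givental \cite{Givental}, Lee \cite{Lee} in $K$-theory—is a degeneration/gluing argument on the base $\PP^1$ of the $G/P$-fibrations.

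Concretely, first I would form the ``doubled'' fibration over $\PP^1$ associated to the concatenation of the loops $x_{\mu_1}$ and $x_{\mu_2}$: one glues $\fibc{{\mu_1}}$ and $\fibc{{\mu_2}}$ along a fiber, using that in $\GG/\GG^0$ the product $x_{\mu_1}x_{\mu_2}$ represents $x_{\mu_1+\mu_2}$, to get a family over a nodal degeneration of $\PP^1$ into two components whose generic fiber is $\fibc{{\mu_1+\mu_2}}$. Let $\MMn_{0,\mu_1,\mu_2}$ denote the moduli of sections (with one marked point at $\infty$ on one tail, and the section required to pass through the node) of this glued fibration. The heart of the argument is a comparison: the evaluation pushforward from $\MMn_{0,\mu_1,\mu_2}$ computes, on one hand, $\Phi(\localb_{\mu_1+\mu_2})$ via a deformation-invariance (virtual localization / cosection-free deformation to the special fiber) identifying the special and generic fibers of the degeneration; on the other hand, splitting sections at the node expresses the same pushforward as a convolution of $\ev_*\OO^{vir}_{\MMn_{0,\mu_1,\b_1}}$ and $\ev_*\OO^{vir}_{\MMn_{0,\mu_2,\b_2}}$ over the diagonal of $G/P$, summed over $\b_1+\b_2=\b$ and over insertions of the dual basis $\{e_i\},\{g^{ij}\}$. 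This convolution, after the standard manipulation with the Poincaré pairing matrix, is exactly the formula defining $\aaa_1\star\aaa_2$ with the $(\id+\agp)^{-1}$ correction—provided one inserts the correction factors consistently. This is where the Givental–Lee device enters: the two-pointed invariants appearing from stable maps whose domain bubbles off extra rational components into the fibers at the node must be resummed, and the algebraic identity $(\id+\agp)^{-1}\circ(\id+\agp)=\id$ is what makes the bookkeeping close. I would present this as: (a) identify the boundary-of-moduli contributions, (b) organize them into the operator $\agp$, (c) invoke the formal inverse identity.

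The key steps in order: (1) unit: compute $\Phi(\localb_0)=1$ directly using Corollary \ref{3ecor1} with $\wl=\mathrm{id}$; (2) construct the glued family over the degenerating base and its section moduli $\MMn_{0,\mu_1,\mu_2}$, verifying smoothness/expected dimension by the same argument as \cite[Lemma 3.2, Prop.\ 4.5]{me}; (3) deformation invariance of $\ev_*\OO^{vir}$ along the degeneration—this identifies the generic-fiber computation (giving $\Phi(\localb_{\mu_1+\mu_2})$) with the special-fiber one; (4) the splitting/gluing axiom for virtual structure sheaves at the node, expressing the special-fiber pushforward as a sum over $\b_1+\b_2=\b$ of convolutions, with the diagonal class expanded via $\sum g^{ij}e_i\otimes e_j$; (5) the resummation into $\star$, using the WDVV-type identity for $(\id+\agp)^{-1}$ exactly as in \cite{Givental, Lee}; (6) conclude $\Phi(\localb_{\mu_1}\bulletsmall\localb_{\mu_2})=\Phi(\localb_{\mu_1})\star\Phi(\localb_{\mu_2})$.

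I expect the main obstacle to be step (4)–(5): matching, term by term, the boundary contributions of $\MMn_{0,\mu_1,\mu_2}$ with the precise combination of $\gw^\b(-,-,e_i)$ and $(\id+\agp)^{-1}(e_j)$ that appears in the definition of $\star$. In particular one must be careful that the ``constant section at the node'' contributes the identity insertion correctly and that bubbling in the fiber direction—rather than section degeneration—produces exactly the operator $\agp$ and not something off by a sign or a missing $\b=0$ term; the fundamental-class axiom $\gw^0(\aaa,e_i)=\chi_{G/P}(\aaa\otimes e_i)$ and the string-equation-type vanishing for $K$-theoretic invariants are the tools here. A secondary technical point is justifying virtual localization and deformation invariance on Deligne–Mumford stacks that are only locally (not globally) finite-dimensional; this is handled, as in Remark \ref{2abrmk}, by passing to coarse moduli and using that all the relevant stacks in the computation are smooth of expected dimension, so $\OO^{vir}=\OO$ and the stacky subtleties evaporate.
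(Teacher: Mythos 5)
Your strategy matches the paper's: degenerate the base $\PP^1$ into a nodal curve so that the $G/P$-fibration over $\PP^1$ for the concatenated loop $x_{\mu_1+\mu_2}$ breaks into the two fibrations for $x_{\mu_1}$ and $x_{\mu_2}$, glue section moduli at the node via the diagonal class $\sum_{i,j}g^{ij}e_i\otimes e_j$, and organize the fiber-bubbling contributions into the operator $\agp$ so that $(\id+\agp)^{-1}\circ(\id+\agp)=\id$ closes the bookkeeping. The paper factors this analysis into a standalone degeneration formula (Proposition~\ref{appmain} in Appendix~\ref{app1}), proved via Costello's Artin stacks $\AMM_{0,k,A,\ac}$, modular graphs, inclusion--exclusion, and Behrend's cutting-edges compatibility of obstruction theories; that formula reads
\[ \pgw_{\mu_1+\mu_2}(\vec{\g}^{(1)},\vec{\g}^{(2)}) = \sum_{i,j\in I}g^{ij}\,\pgw_{\mu_1}\bigl(\vec{\g}^{(1)},(\id+\agp)^{-1}(e_i)\bigr)\,\pgw_{\mu_2}(\vec{\g}^{(2)},e_j),\]
and the proposition is then proved by purely algebraic manipulation with it.

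There is, however, a genuine gap in your step (5). A single nodal degeneration gives only a \emph{two}-pointed convolution: after splitting at the node you are left with expressions of the form $\pgw_{\mu_1}(e_i, -)$, which are two-pointed parametrized invariants, whereas the quantum product $\star$ is defined through the \emph{three}-pointed invariants $\gw^{\b}(\aaa_1,\aaa_2,e_i)$. There is no straightforward resummation that converts one into the other, and your plan does not say how the third marked point is supposed to arise. The paper's fix is to apply the degeneration formula a \emph{second} time, to the trivial splitting $\mu_1 = \mu_1 + 0$: this moves the remaining $\mu_1$-weight onto a fresh factor and leaves behind a three-pointed invariant over the trivial fibration $\fibc{0}\simeq\PP^1\times G/P$, which is then identified via a section/stable-map correspondence (cf.\ \cite[Lemma 3.7]{me}) with the ordinary $\sum_\b q^\b\gw^\b(\Phi(\localb_{\mu_1}),\Phi(\localb_{\mu_2}),e_i)$ occurring in the definition of $\star$. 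Without this second application your argument cannot produce the three-pointed structure, so the claimed identity $\Phi(\localb_{\mu_1}\bulletsmall\localb_{\mu_2})=\Phi(\localb_{\mu_1})\star\Phi(\localb_{\mu_2})$ would not follow. As a secondary point, the ``deformation invariance'' in your step (3) hides substantial technology: the paper replaces it with a precise commutative square of stacks and a Cartier-divisor identity (Lemma~\ref{applem}) together with virtual pullback machinery, which is what actually justifies passing from the special to the generic fiber of the degeneration.
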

\begin{proof}
The proof relies heavily on Appendix \ref{app1}. Let $\mu_1,\mu_2\in\Q$. We have 
\[\Phi(\localb_{\mu_1}\bulletsmall \localb_{\mu_2})= \Phi(\localb_{\mu_1+\mu_2}) = \sum_{i,j\in I} g^{ij}\pgw_{\mu_1+\mu_2}(e_i) (\id+\agp)^{-1}(e_j)\]
where $\pgw$ is defined in Definition \ref{appdef2}. By Proposition \ref{appmain}, the last expression is equal to 
\begin{align*}
& \sum_{i,j\in I}\sum_{i',j'\in I}g^{ij} g^{i'j'} \pgw_{\mu_1}\left(e_i,(\id+\agp)^{-1}(e_{i'})\right)\pgw_{\mu_2}(e_{j'}) (\id+\agp)^{-1}(e_j)\\
=~& \sum_{i,j\in I} g^{ij}\pgw_{\mu_1}\left(e_i,\Phi(\localb_{\mu_2})\right)  (\id+\agp)^{-1}(e_j).
\end{align*}
Applying Proposition \ref{appmain} again, to the splitting $\mu_1=\mu_1+0$, the last expression is equal to
\begin{align*}
& \sum_{i,j\in I}\sum_{i',j'\in I}g^{ij} g^{i'j'} \pgw_{\mu=0}\left(e_i,\Phi(\localb_{\mu_2}),(\id+\agp)^{-1}(e_{i'})\right)\pgw_{\mu_1}(e_{j'}) (\id+\agp)^{-1}(e_j)\\
=~& \sum_{i,j\in I}g^{ij} \pgw_{\mu=0}\left(e_i,\Phi(\localb_{\mu_2}),\Phi(\localb_{\mu_1}) \right) (\id+\agp)^{-1}(e_j).
\end{align*}
But we have (cf. \cite[Lemma 3.7]{me})
\[ \pgw_{\mu=0}\left(e_i,\Phi(\localb_{\mu_2}),\Phi(\localb_{\mu_1}) \right) = \sum_{\b\in\ec}q^{\b} \gw^{\b}(\Phi(\localb_{\mu_1}),\Phi(\localb_{\mu_2}),e_i). \]
Therefore, 
\[ \Phi(\localb_{\mu_1}\bulletsmall \localb_{\mu_2})=\Phi(\localb_{\mu_1})\star\Phi(\localb_{\mu_2})\]
as desired.
\end{proof}
\subsection{Step 1 of the computation: reducing to the initial term} \label{4b} 
The main result of this subsection is Proposition \ref{4bprop}. Let $\wl\in W_{af}^-$. Recall the map $f_{\wl}:\Gw\ra \GG/\GG^0$ defined in Section \ref{3b}. By abuse of notation, we also denote by $f_{\wl}$ the composite
\[ \Gw\xrightarrow{f_{\wl}} \GG/\GG^0 \ra L_{sm}G/L_{sm}^0G\xrightarrow{\sim} \O_{sm} K\]
where the second arrow is the obvious map and the third is the diffeomorphism in Theorem \ref{2cdiffeo}.
\begin{lemma} $f_{\wl}$ is a $\BB^{0,-}$-equivariant resolution of $\ol{\BB^{0,-}_{sm}\cdot x_{\wll}}$. In particular, the image of $f_{\wl}$ lies in $Gr_G=\O_{pol}K$.
\end{lemma}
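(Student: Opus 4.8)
Here is how I would prove the lemma. The statement has two parts: $\BB^{0,-}$-equivariance of $f_{\wl}$, and the identification of $f_{\wl}$ with a (Bott--Samelson) resolution of $\ol{\BB^{0,-}_{sm}\cdot x_{\wll}}$; the clause asserting that the image lies in $\O_{pol}K$ will drop out of the latter. The plan is to get equivariance directly from the defining formula, and then run the standard Demazure/Bott--Samelson argument, exactly as in the cohomological counterpart \cite{me}.

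For equivariance, note that $\BB^{0,-}$ acts on $\Gw$ by left translation on the first factor $\PPc_{\widetilde{\a}_{i_1}}$ (this descends to the twisted product because left multiplication commutes with the successive $\BB^{0,-}$-quotients), and it acts on $\GG/\GG^0$ by left translation, hence on $\O_{sm}K$ through the injective homomorphism $\BB^{0,-}\hookrightarrow\BB^{0,-}_{sm}$ coming from the restriction map $\GG^0\to L^0_{sm}G$ and the diffeomorphism of Theorem \ref{2cdiffeo}. Since $f_{\wl}([\vp_1:\cdots:\vp_{\ell(\wl)}])=\vp_1\cdots\vp_{\ell(\wl)}\GG^0$, one has $b\cdot f_{\wl}([\vp_1:\cdots:\vp_{\ell(\wl)}])=(b\vp_1)\vp_2\cdots\vp_{\ell(\wl)}\GG^0=f_{\wl}(b\cdot[\vp_1:\cdots:\vp_{\ell(\wl)}])$ for every $b\in\BB^{0,-}$, which is the asserted equivariance.

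For the rest I would exploit the iterated $\PP^1$-bundle structure of $\Gw$: it yields a cell decomposition $\Gw=\bigsqcup_{\sigma}C_{\sigma}$ indexed by the subwords $\sigma$ of the chosen reduced word $(i_1,\ldots,i_{\ell(\wl)})$, with a unique open dense cell $C\cong\CC^{\ell(\wl)}$ (the full word). The one substantial point --- and the only place where reducedness of the word enters --- is that $f_{\wl}$ restricts to an isomorphism $C\xrightarrow{\sim}\BB^{0,-}_{sm}\cdot x_{\wll}$. I would prove this by induction on $\ell(\wl)$, peeling off the last $\PP^1$-factor and using the Bruhat decomposition of $\O_{pol}K$ (Theorem \ref{2cBruhat}) to track how multiplication by $\PPc_{\widetilde{\a}_{i_{\ell(\wl)}}}$ enlarges the affine Schubert cell at each step. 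Granting this: $f_{\wl}$ is birational onto its image; $\Gw$ is compact and irreducible, so $f_{\wl}(\Gw)$ is a compact irreducible subvariety and $f_{\wl}(\Gw)=\ol{f_{\wl}(C)}=\ol{\BB^{0,-}_{sm}\cdot x_{\wll}}$; choosing $n$ with $\BB^{0,-}_{sm}\cdot x_{\wll}\subset\O_{pol}^{(n)}K$ and using that $\O_{pol}^{(n)}K$ is a closed (hence compact, hence projective) subvariety of the finite-dimensional Grassmannian $Gr^{(n)}(H)$ gives $\ol{\BB^{0,-}_{sm}\cdot x_{\wll}}\subset\O_{pol}^{(n)}K\subset\O_{pol}K=Gr_G$, which is the ``in particular'' clause; and finally $f_{\wl}$, being a proper birational morphism of projective varieties with smooth source, is a resolution.

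The hard part is the isomorphism $f_{\wl}|_C\colon C\xrightarrow{\sim}\BB^{0,-}_{sm}\cdot x_{\wll}$. The subtlety absent from the classical finite-dimensional Bott--Samelson picture is that for $i\geqslant 1$ one has $\PPc_{\widetilde{\a}_i}\subset\GG^0$, so an interior letter $i_j\geqslant 1$ contributes nothing to the $\GG^0$-coset on its own; the induction must therefore be organized in $W_{af}/W$, and it relies on the word being reduced for $\wl\in W_{af}^-$, which forces $\dim\Gw=\ell(\wl)=\dim\ol{\BB^{0,-}_{sm}\cdot x_{\wll}}$ so that $f_{\wl}$ does not collapse. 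This is precisely the analysis carried out in \cite{me} for the affine Schubert classes in $H^T(Gr_G)$, and it transcribes to the present setting verbatim.
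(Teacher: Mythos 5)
Your proposal follows essentially the same route as the paper: locate the open dense subset of $\Gw$ that maps bijectively onto $\BB^{0,-}_{sm}\cdot x_{\wll}$ (this is where reducedness of the word enters), then conclude by properness and irreducibility that the full image is the closure of that cell. The one thing the paper does first, which your write-up should too, is establish the containment $\PPc_{\widetilde{\a}_i}\cdot Gr^{(n)}(H)\subseteq Gr^{(n+1)}(H)$: this shows the image of $f_{\wl}$ lands in a finite-dimensional Grassmannian $Gr^{(N)}(H)$, which is what makes $f_{\wl}$ an algebraic morphism to begin with --- as written, you invoke algebraic-geometric notions (birational, projective subvariety, resolution) before the algebraicity of $f_{\wl}$ is in hand, and you derive the ``in particular'' clause by a more roundabout route through $\O_{pol}^{(n)}K$ than the paper's direct observation provides.
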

\begin{proof}  
First notice that for any $n\in\NN$ and $i=0,\ldots, r$, we have 
\[ \PPc_{\widetilde{\a}_i}\cdot Gr^{(n)}(H)\subseteq Gr^{(n+1)}(H).\]
It follows that $f_{\wl}$ lands in $Gr^{(N)}(H)$ for sufficiently large $N$. In particular, $f_{\wl}$ is algebraic. Since $\wl\in W_{af}^-$ and the word defining $\Gw$ is reduced, there exists a unique point $\g\in\Gw$ such that $f_{\wl}(\g)=x_{\wll}$. Moreover, the orbit $\BB^{0,-}\cdot\g$ is open and $f_{\wl}|_{\BB^{0,-}\cdot\g}$ is bijective onto $\BB^{0,-}_{sm}\cdot x_{\wll}$. The rest of the proof is clear. 
\end{proof}

\begin{lemma} \label{4bratsing} $(f_{\wl})_*[\OO_{\Gw}] = [\OO_{\ol{\BB^{0,-}_{sm}\cdot x_{\wll}}}]=\ascl_{\wl}\in K^T(Gr_G)$.
\end{lemma}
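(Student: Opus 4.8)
The plan is to deduce this from the previous lemma, which says that $f_{\wl}:\Gw\to\ol{\BB^{0,-}_{sm}\cdot x_{\wll}}$ is a $\BB^{0,-}$-equivariant resolution, together with the fact that the affine Schubert varieties $\ol{\BB^{0,-}_{sm}\cdot x_{\wll}}$ have rational singularities (this is the reason for the name of the lemma). Indeed, once one knows that $f_{\wl}$ is a resolution of a variety with rational singularities, the standard characterization of rational singularities gives $(f_{\wl})_*\OO_{\Gw}=\OO_{\ol{\BB^{0,-}_{sm}\cdot x_{\wll}}}$ and $R^if_{\wl*}\OO_{\Gw}=0$ for $i>0$, whence $(f_{\wl})_*[\OO_{\Gw}]=[\OO_{\ol{\BB^{0,-}_{sm}\cdot x_{\wll}}}]$ in $K^T$ by the definition of the $K$-theoretic pushforward as an alternating sum. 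The second equality $[\OO_{\ol{\BB^{0,-}_{sm}\cdot x_{\wll}}}]=\ascl_{\wl}$ is then just the definition of $\ascl_{\wl}$, recalled in Section~\ref{2c} (with the Zariski closure taken inside some $\O^{(n)}_{pol}K$, which is legitimate since $f_{\wl}$ lands in $Gr^{(N)}(H)$ for large $N$ by the previous lemma).

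Concretely, I would proceed as follows. First, recall that $\Gw$ is a smooth projective variety built as an iterated $\PP^1$-bundle, hence in particular has rational singularities trivially, and $f_{\wl}$ is projective and birational onto its image $\ol{\BB^{0,-}_{sm}\cdot x_{\wll}}$ by the previous lemma. Second, invoke the known fact that affine Schubert varieties have rational singularities — this goes back to the Frobenius-splitting / Kumar–Mathieu theory for Kac–Moody flag varieties (one can cite Kumar's book, or Mathieu, or Littelmann, or in the Pressley–Segal setting the relevant references used elsewhere in the paper); the Bott–Samelson variety $\Gw$ is precisely a ``Bott–Samelson resolution'' of the affine Schubert variety, and these are the standard resolutions witnessing rational singularities. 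Third, apply the cohomological criterion: for a projective birational morphism $g:\widetilde X\to X$ from a smooth $\widetilde X$ to a variety $X$ with rational singularities, $g_*\OO_{\widetilde X}=\OO_X$ and $R^ig_*\OO_{\widetilde X}=0$ for all $i>0$. Finally, translate this into the $K$-theory statement: $(f_{\wl})_*[\OO_{\Gw}]=\sum_i(-1)^i[R^i(f_{\wl})_*\OO_{\Gw}]=[\OO_{\ol{\BB^{0,-}_{sm}\cdot x_{\wll}}}]$, all equivariantly for the residual $\BB^{0,-}$- and $T$-actions since $f_{\wl}$ is equivariant. The identification with $\ascl_{\wl}$ is by definition.

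There is essentially one point requiring a little care rather than a genuine obstacle: making sure the rational-singularities input is correctly sourced in the infinite-dimensional Pressley–Segal model. The affine Grassmannian here is $\O_{pol}K=\bigcup_n\O^{(n)}_{pol}K$, so the statement is really about each finite-dimensional projective variety $\O^{(n)}_{pol}K$ (or the closure of the relevant Schubert cell inside it). One should note that $\O^{(n)}_{pol}K$ is isomorphic to a union of Schubert varieties in the thick/thin affine Grassmannian for $G$, and that the Bott–Samelson variety $\Gw$ maps onto the closed Schubert variety indexed by $\wll$; then the classical rational-singularities result for (affine) Schubert varieties applies. Alternatively — and this is worth a remark — one can bypass rational singularities entirely by a direct fiberwise computation: since $f_{\wl}$ is an iterated tower of $\PP^1$-bundle maps possibly followed by blow-downs, one can compute $Rf_{\wl*}\OO_{\Gw}$ step by step using that a $\PP^1$-bundle has $\OO$ with no higher direct images, exactly as in the cohomology-of-structure-sheaf computation for ordinary Bott–Samelson varieties; but citing rational singularities of affine Schubert varieties is cleaner. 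I would therefore present the short argument via the rational-singularities criterion and include a one-line citation to the standard reference for rational singularities of affine Schubert varieties.

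\begin{proof}
By the previous lemma, $f_{\wl}$ is a projective birational morphism from the smooth projective variety $\Gw$ onto the affine Schubert variety $\ol{\BB^{0,-}_{sm}\cdot x_{\wll}}$, which — viewed inside $\O^{(N)}_{pol}K$ for $N$ large — has rational singularities by the standard Frobenius-splitting arguments for affine Schubert varieties (see, e.g., \cite{KumarJEMS} and the references therein). Hence $(f_{\wl})_*\OO_{\Gw}=\OO_{\ol{\BB^{0,-}_{sm}\cdot x_{\wll}}}$ and $R^i(f_{\wl})_*\OO_{\Gw}=0$ for $i>0$, and these isomorphisms are $T$-equivariant since $f_{\wl}$ is $T$-equivariant. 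By the definition of the $K$-theoretic pushforward,
\[ (f_{\wl})_*[\OO_{\Gw}] = \sum_{i\geqslant 0}(-1)^i[R^i(f_{\wl})_*\OO_{\Gw}] = [\OO_{\ol{\BB^{0,-}_{sm}\cdot x_{\wll}}}] \in K^T(Gr_G). \]
The last term equals $\ascl_{\wl}$ by definition.
\end{proof}
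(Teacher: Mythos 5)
Your proposal is correct and follows essentially the same route as the paper: invoke the fact that affine Schubert varieties have rational singularities, observe that $f_{\wl}$ is a Bott--Samelson resolution, and conclude via the standard criterion that $Rf_{\wl*}\OO_{\Gw}=\OO$. The paper cites \cite[Theorem 8.2.2]{Kumarbook} for the rational-singularities input and adds the caveat that its definition of the affine Schubert variety (via the Pressley--Segal model) is a priori different from Kumar's but that the arguments transfer, a point you also flag.
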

\begin{proof}
This follows from the fact that affine Schubert varieties have rational singularities. A proof can be found in \cite[Theorem 8.2.2]{Kumarbook}. Although our definition of these varieties is a priori different from the one in \textit{loc. cit.}, the arguments there apply well to our case.
\end{proof}

Recall $\ascl_{\wl}$ is regarded as an element of the domain of $\Phi$ via \eqref{2cmono}.
\begin{lemma} \label{4bchangebasis} We have 
\[ \Phi(\ascl_{\wl})=\sum_{i,j\in I} \sum_{\b\in\b_{\wll}+\ec} q^{\b} g^{ij}\chi_{\MMw}(\ev^*e_i)(\id+\agp)^{-1}(e_j)\]
where $\b_{\wll}$ is defined in Definition \ref{3dbwl}.
\end{lemma}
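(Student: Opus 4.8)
The plan is to pass to the localization basis $\{\localb_{\mu}\}$, reduce the lemma to a single identity of Euler characteristics (one for each $\b$ and each $i$), and prove that identity by exhibiting $\MMm$ and $\MMw$ as a fibre and a pullback of one and the same universal moduli of sections. First, since $\Phi$ is $\fof(\rep)$-linear and determined by Definition \ref{4adef} on the $\fof(\rep)$-basis $\{\localb_{\mu}\}$, and since $\ascl_{\wl}$ expands under \eqref{2cmono} as a \emph{finite} sum $\sum_{\mu}c_{\mu}\localb_{\mu}$ (finite because $\ascl_{\wl}$ is supported on the projective variety $\ol{\BB^{0,-}_{sm}\cdot x_{\wll}}$, which has finitely many $T$-fixed points), one gets
\begin{equation*}
\Phi(\ascl_{\wl})=\sum_{i,j\in I}\sum_{\b\in\pi_2(G/P)}q^{\b}g^{ij}\Big(\sum_{\mu}c_{\mu}\,\chi_{\MMm}(\OO^{vir}_{\MMm}\otimes\ev^{*}e_i)\Big)(\id+\agp)^{-1}(e_j).
\end{equation*}
Hence it suffices to show, for every $\b$ and every $i$,
\begin{equation*}
\sum_{\mu}c_{\mu}\,\chi_{\MMm}(\OO^{vir}_{\MMm}\otimes\ev^{*}e_i)=\chi_{\MMw}(\ev^{*}e_i),\tag{$\ast$}
\end{equation*}
after which the sum over $\b$ can be restricted to $\b_{\wll}+\ec$ because $\chi_{\MMw}(\ev^{*}e_i)=0$ unless $\MMw\ne\emptyset$, by Corollary \ref{3ecor1}.

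To prove $(\ast)$ I would exploit the functoriality in the base of the $G/P$-fibration $\fib{}$ and of the attached moduli of sections (Sections \ref{3a}, \ref{3b}): there is a universal moduli $\rho\colon\mathfrak{M}_{\b}\ra Gr_G$ (constructed over a finite-dimensional projective piece $\O_{pol}^{(N)}K$ containing all the relevant $x_{\mu}$ and the image of $f_{\wl}$; one works there throughout and passes to coarse moduli as in Remark \ref{2abrmk}), carrying a universal evaluation map $\widehat{\ev}\colon\mathfrak{M}_{\b}\ra G/P$ and a relative virtual structure sheaf, such that $\MMm$ is the fibre of $\rho$ over $x_{\mu}$ with $\ev=\widehat{\ev}|_{\MMm}$, while $\MMw=\Gw\times_{f_{\wl},\rho}\mathfrak{M}_{\b}$ with $\ev$ the composite of the projection to $\mathfrak{M}_{\b}$ and $\widehat{\ev}$. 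Setting $\Xi_{\b}:=\rho_{*}(\OO^{vir}_{\mathfrak{M}_{\b}/Gr_G}\otimes\widehat{\ev}^{*}e_i)\in K^{T}(Gr_G)$, $K$-theoretic base change along the regular embedding $x_{\mu}\hookrightarrow Gr_G$ (together with compatibility of the virtual structure sheaves under this restriction) gives $\chi_{\MMm}(\OO^{vir}_{\MMm}\otimes\ev^{*}e_i)=\Xi_{\b}|_{x_{\mu}}$, and $K$-theoretic base change along $f_{\wl}$ (together with the fact that $\MMw$ is smooth of expected dimension, \cite[Proposition 4.5]{me}, so that its virtual structure sheaf is just $[\OO_{\MMw}]$) gives $\chi_{\MMw}(\ev^{*}e_i)=\chi_{\Gw}(f_{\wl}^{*}\Xi_{\b})$. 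Combining the projection formula with Lemma \ref{4bratsing} then yields
\begin{equation*}
\chi_{\MMw}(\ev^{*}e_i)=\chi_{\Gw}\big([\OO_{\Gw}]\otimes f_{\wl}^{*}\Xi_{\b}\big)=\chi_{Gr_G}\big(\ascl_{\wl}\otimes\Xi_{\b}\big),
\end{equation*}
which makes sense because $\ascl_{\wl}$ has projective support.

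The last step is to evaluate $\chi_{Gr_G}(\ascl_{\wl}\otimes\Xi_{\b})$ by $K$-theoretic equivariant localization. Applied to the properly supported class $\ascl_{\wl}\otimes\Xi_{\b}$ this gives $\chi_{Gr_G}(\ascl_{\wl}\otimes\Xi_{\b})=\sum_{\mu}(\ascl_{\wl}\otimes\Xi_{\b})|_{x_{\mu}}\big/\big([\OO_{x_{\mu}}]|_{x_{\mu}}\big)=\sum_{\mu}c_{\mu}\,\Xi_{\b}|_{x_{\mu}}$, where the last equality is precisely the assertion that the monomorphism \eqref{2cmono}, as constructed in the proof of Lemma \ref{2cbasislemma}, is the normalized restriction-to-fixed-points map, i.e.\ $c_{\mu}=\ascl_{\wl}|_{x_{\mu}}\big/\big([\OO_{x_{\mu}}]|_{x_{\mu}}\big)$. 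Combining this with the preceding display proves $(\ast)$, and hence the lemma.

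\textbf{Main obstacle.} The change of basis, the projection formula and the localization theorem are routine; the substantive input is the $K$-theoretic base change / functoriality for the virtual structure sheaves of moduli of sections along the \emph{non-flat} maps $x_{\mu}\hookrightarrow Gr_G$ and $f_{\wl}$, which is the $K$-theoretic counterpart of the virtual localization used in \cite{me} to identify the two definitions \eqref{introdef1}, \eqref{introdef2} of $\Phi_{QH}$; one must also handle the bookkeeping of replacing $Gr_G$ and $\GG/\GG^0$ by a common finite-dimensional piece and replacing the Deligne--Mumford moduli by their coarse spaces (Remark \ref{2abrmk}). An alternative to the last two paragraphs, avoiding the universal moduli, is to apply $K$-theoretic virtual Atiyah--Bott localization to the $T$-action on $\MMw$ directly: its fixed locus is $\bigsqcup_{p\in\Gw^{T}}\MM(\mu_p,\b)^{T}$ with $f_{\wl}(p)=x_{\mu_p}$, the virtual normal bundle at a fixed section splits $T$-equivariantly as the normal bundle of $\Gw^{T}$ in $\Gw$ at $p$ plus the virtual normal bundle of $\MM(\mu_p,\b)^{T}$ in $\MM(\mu_p,\b)$, and grouping the first factors by the value of $\mu_p$ (using $K$-theoretic localization on $\Gw$ and Lemma \ref{4bratsing}) reproduces the coefficients $c_{\mu}$, while the second factors reassemble, by virtual localization on $\MM(\mu,\b)$, into $\chi_{\MMm}(\OO^{vir}_{\MMm}\otimes\ev^{*}e_i)$.
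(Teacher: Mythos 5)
Your ``alternative'' at the end of the Main obstacle paragraph is precisely the paper's proof: the paper first writes $[\OO_{\Gw}]=\sum_{\g\in\Gw^T}\Lambda_{-1}((T_\g\Gw)^\vee)^{-1}[\OO_\g]$ by classical localization on the smooth projective $\Gw$, applies $(f_{\wl})_*$ together with Lemma~\ref{4bratsing} to obtain $\ascl_{\wl}=\sum_\mu c_\mu\localb_\mu$, and then invokes the $K$-theoretic virtual localization formula \cite{Kvirtual} on the $T$-action on $\MMw$, whose fixed locus decomposes over $\Gw^T$ exactly as you describe, with virtual normal bundle splitting as $T_p\Gw\oplus N^{vir}$. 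That route is correct and is the intended one (cf.\ the cited Lemma~3.9 of \cite{me}).

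Your primary route --- constructing a universal moduli $\mathfrak{M}_\b\ra Gr_G$ and deducing $(\ast)$ by base change along $x_\mu\hookrightarrow Gr_G$ and along $f_{\wl}$, followed by a localization formula on $Gr_G$ --- is genuinely different and contains a real gap. The finite-dimensional pieces $\O^{(N)}_{pol}K$ are in general singular at the torus fixed points $x_\mu$, so $\{x_\mu\}\hookrightarrow\O^{(N)}_{pol}K$ is not a regular embedding and there is no normal bundle making sense of the normalization $[\OO_{x_\mu}]|_{x_\mu}$ or of the restriction $\Xi_\b|_{x_\mu}$ in the naive way your localization sum requires; one would have to localize in a smooth ambient Grassmannian $Gr^{(N)}(H)$ and control the resulting larger normal bundle, or pass to a derived framework, neither of which you supply. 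More fundamentally, what you flag as the ``substantive input'' --- base-change compatibility of the relative virtual structure sheaf of $\mathfrak{M}_\b$ along the non-flat maps $\{x_\mu\}\hookrightarrow Gr_G$ and $f_{\wl}$ --- is not an available black box; establishing it would require essentially the same virtual localization analysis that your alternative applies directly to $\MMw$. The paper avoids all of this by never posing a moduli problem over the singular $Gr_G$: $\MMw$ is defined over the smooth Bott--Samelson $\Gw$ from the outset, so the only inputs needed are Lemma~\ref{4bratsing} and the standard virtual localization formula for $\MMw$.
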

\begin{proof}
By the classical localization formula,
\[ [\OO_{\Gw}] = \sum_{\g\in \Gw^T}\frac{1}{\Lambda_{-1}((T_{\g}\Gw)^{\vee})} [\OO_{\g}]\in K^T(\Gw)\]
where $\Lambda_{-1}(V):=\sum_{i\geqslant 0} (-1)^i[\Lambda^i V]\in\rep$ for any $T$-module $V$. Applying $(f_{\wl})_*$ to both sides of the last equation and using Lemma \ref{4bratsing}, we get
\[  \ascl_{\wl} = \sum_{\mu\in\Q} \left(\sum_{\g\in\Gw^T\cap f_{\wl}^{-1}(\mu)} \frac{1}{\Lambda_{-1}((T_{\g}\Gw)^{\vee})}\right)\localb_{\mu} .\] 
The rest follows from a parallel argument used in the proof of \cite[Lemma 3.9]{me} which deals with the case of quantum cohomology. In our case, we need the $K$-theoretic version of virtual localization formula in \cite{GP}. See \cite{Kvirtual} for the explicit formula and its proof. 
\end{proof}

\begin{remark}\label{4brmk}
Some care needs to be taken when we apply the virtual localization formula: The proof of this formula given in \cite{Kvirtual} assumes an extra condition which, by \cite[Proposition 5.13]{KS}, is satisfied if our stack admits a $T$-equivariant \'etale atlas of finite type. According to the remark following that proposition, which cites \cite[Theorem 4.3]{AHR}, every separated Deligne-Mumford stack of finite type with a $T$-action admits such an atlas, after possibly reparametrizing $T$. Notice that such reparametrization will not affect the argument in the proof of Lemma \ref{4bchangebasis}. Alternatively, the existence of the required atlas for our particular stack follows from \cite[Corollary 4]{Oprea}.
\end{remark}

Define 
\[\vp:= \sum_{i,j\in I}\sum_{\b\in\b_{\wll}+\ec} q^{\b} g^{ij} \chi_{\MMw}(\ev^*e_i)e_j.\]
Notice the absence of $(\id+\agp)^{-1}$. Write $\vp=\vp_0+\vp_+$ where $\vp_0$ (resp. $\vp_+$) is the expression contributed by $\b=\b_{\wll}$ (resp. $\b\ne\b_{\wll}$). 
 
\begin{proposition}\label{4bprop} $\Phi(\ascl_{\wl})=\vp_0$.
\end{proposition}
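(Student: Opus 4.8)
The statement to prove is $\Phi(\ascl_{\wl})=\vp_0$, where by Lemma \ref{4bchangebasis} we have
\[ \Phi(\ascl_{\wl})=\sum_{i,j\in I}\sum_{\b\in\b_{\wll}+\ec} q^{\b} g^{ij}\chi_{\MMw}(\ev^*e_i)(\id+\agp)^{-1}(e_j), \]
and $\vp=\vp_0+\vp_+$ is the same sum without the operator $(\id+\agp)^{-1}$, split according to $\b=\b_{\wll}$ versus $\b\ne\b_{\wll}$. So the task is to show that applying $(\id+\agp)^{-1}$ to $\vp$ exactly cancels the higher-degree tail $\vp_+$, leaving $\vp_0$; equivalently, $(\id+\agp)(\vp_0)=\vp$, i.e. $\agp(\vp_0)=\vp_+$. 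The plan is to prove this last identity by a geometric comparison of the moduli stacks $\MMw$ for the various $\b$ with the two-pointed Gromov-Witten moduli $\MM_{0,2}(G/P,\b-\b_{\wll})$ that enter the definition of $\agp$.

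\textbf{Key steps.} First, I would reinterpret $\vp$ as $\sum_{\b\in\b_{\wll}+\ec} q^{\b}\,\ev_*[\OO_{\MMw}]\in K_T(G/P)\otimes\ZZ[\ec^{\pm}]$, using $\sum_{i,j}g^{ij}\chi_{G/P}(\alpha\otimes e_i)e_j=\alpha$ for any $\alpha$ (the matrix $\{g^{ij}\}$ is the inverse Poincaré pairing), together with the projection formula and Remark \ref{2abrmk} which lets us replace the virtual structure sheaf on $\MMw$ by the honest $[\OO_{\MMw}]$ (since $\MMw$ is smooth of expected dimension by Proposition of Section \ref{3e}) and pass to the coarse moduli. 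Thus $\vp_0=q^{\b_{\wll}}\ev_*[\OO_{\MM(\wl,\b_{\wll})}]$ and $\vp_+=\sum_{\b\in\b_{\wll}+(\ec\setminus\{0\})}q^{\b}\ev_*[\OO_{\MMw}]$. The main geometric input is then the identity
\[ \ev_*[\OO_{\MMw}] = (\ev^{\b-\b_{\wll}}_2)_*(\ev^{\b-\b_{\wll}}_1)^*\ev_*[\OO_{\MM(\wl,\b_{\wll})}],\qquad \b\in\b_{\wll}+(\ec\setminus\{0\}), \]
which is precisely equation \eqref{introeq} of the outline. To establish it I would use Proposition \ref{3emain} and Corollary \ref{3ecor2}: $\MMw$ is irreducible with $\MMw^{\ct}$ having a unique attractive component equal (set-theoretically) to the image of the closed immersion \eqref{3eemb} out of $F_{\wl}\times_{(h_{\wl},\ev_1)}\MM_{0,2}(G/P,\b-\b_{\wll})$. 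Applying Oprea's stacky Bia\l ynicki-Birula theorem \cite{Oprea} to the $\ct$-action, the attractive cell is an affine fibration over this fixed component, so pushforward of $[\OO_{\MMw}]$ along any $\ct$-equivariant proper map agrees with the pushforward of $[\OO]$ of the attractive fixed component — in particular $\ev_*[\OO_{\MMw}]$ is computed on the image of \eqref{3eemb}. A diagram chase identifying $\ev$ restricted to that image with the composite $F_{\wl}\times\MM_{0,2}(G/P,\b-\b_{\wll})\to\MM_{0,2}(G/P,\b-\b_{\wll})\xrightarrow{\ev_2} G/P$, combined with the analogous computation at $\b=\b_{\wll}$ (where the fixed component is $F_{\wl}$ itself and $\ev_*[\OO_{\MM(\wl,\b_{\wll})}]=(h_{\wl})_*[\OO_{F_{\wl}}]$ up to the evaluation identification), and the base-change/projection formula over $h_{\wl}$ versus $\ev_1^{\b-\b_{\wll}}$, then yields exactly the displayed formula. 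Finally, summing over $\b$:
\[ \agp(\vp_0)=\sum_{\b\in\ec\setminus\{0\}}q^{\b}(\ev_2^{\b})_*(\ev_1^{\b})^*\Big(q^{\b_{\wll}}\ev_*[\OO_{\MM(\wl,\b_{\wll})}]\Big)=\sum_{\b\in\b_{\wll}+(\ec\setminus\{0\})}q^{\b}\ev_*[\OO_{\MMw}]=\vp_+, \]
where the middle equality is the displayed identity with a shift of index, and the first equality unwinds the definition of $\agp$ using again $\sum_{i,j}g^{ij}\gw^{\b}(\alpha,e_i)e_j=(\ev_2^{\b})_*(\ev_1^{\b})^*\alpha$. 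Hence $(\id+\agp)(\vp_0)=\vp$ and $\Phi(\ascl_{\wl})=(\id+\agp)^{-1}(\vp)=\vp_0$.

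\textbf{Main obstacle.} The delicate point is the passage from "$\MMw^{\ct}$ has a unique attractive component, set-theoretically the image of \eqref{3eemb}" to the $K$-theoretic pushforward identity for $\ev_*[\OO_{\MMw}]$. One must know that the attractive Bia\l ynicki-Birula stratum is an honest affine fibration over the (reduced) attractive fixed component and that $\ev$ is constant along the affine fibers, so that pushforward of the structure sheaf is unaffected by retracting onto the fixed locus; this uses Oprea's theorem \cite{Oprea} together with $\ct$-equivariance of $\ev$, and requires the $\ct$-equivariant étale atlas of finite type discussed in Remark \ref{4brmk} and Corollary \ref{3ecor2}, possibly after reparametrizing $\ct$. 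A secondary subtlety is matching scheme structures and curve-class bookkeeping in the fiber product defining the image of \eqref{3eemb} — in particular checking that $\b_{\wll}$ as in Definition \ref{3dbwl} is the class carried by the constant section so that the index shift $\b\mapsto\b-\b_{\wll}$ lines up with the effective classes appearing in $\agp$ — but these are the kind of routine verifications that \cite{me} has already carried out in the cohomological setting and that carry over essentially verbatim.
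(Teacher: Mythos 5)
Your high-level plan matches the paper's: reduce $\Phi(\ascl_{\wl})=\vp_0$ to $\vp_+=\agp(\vp_0)$ (Lemma \ref{4blem1} in the paper), reduce that in turn to the pushforward identity
\[
\ev_*[\OO_{\MMw}]=(\ev^{\b-\b_{\wll}}_2)_*(\ev^{\b-\b_{\wll}}_1)^*\ev_*[\OO_{\MM(\wl,\b_{\wll})}],\qquad \b\in\b_{\wll}+(\ec\setminus\{0\}),
\]
(Lemma \ref{4blem2}), and then sum. The organization is correct and the appeals to Proposition \ref{3emain}, Corollary \ref{3ecor2}, Remark \ref{2abrmk}, the $\ct$-equivariance of $\ev$, and the base-change formula are all in the right places.

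The gap is in your justification of the pushforward identity itself. You argue that because the attractive cell is an affine fibration over the attractive fixed component and $\ev$ is constant along the affine fibers, ``pushforward of $[\OO_{\MMw}]$ \ldots agrees with the pushforward of $[\OO]$ of the attractive fixed component.'' This inference is not valid as stated: the attractive cell is only an \emph{open} substack, while $\ev_*[\OO_{\MMw}]$ pushes forward the structure sheaf of the entire stack, including the complement of the attractive cell. Theorem \ref{3ebb} gives an affine fibration $U\to F'$ only on that open locus, and $\ev=\ev_F\circ\phi$ is a priori only a factorization through a \emph{rational} map $\phi:M\dashrightarrow F$. To pass from this to an identity in $K$-theory one has to resolve the indeterminacy: in the paper this is done by constructing an equivariant resolution $\nu_M:Z\to M$, $\nu_F:Z\to F$, and then invoking two nontrivial results --- Viehweg's theorem that finite-quotient singularities are rational, giving $(\nu_M)_*[\OO_Z]=[\OO_M]$, and Buch--Mihalcea's theorem \cite[Theorem 3.1]{BM}, giving $(\nu_F)_*[\OO_Z]=[\OO_F]$ from surjectivity plus rational general fibers. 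Those are the steps that actually prove Lemma \ref{4blem2}, and they do not follow from the affine-fibration structure of the open cell alone. Your ``Main obstacle'' paragraph correctly senses that this is the delicate step, but the mechanism you name (Oprea's theorem plus equivariance of $\ev$ plus an \'etale atlas) does not close it; one still needs the resolution and the rational-fibers/rational-singularities arguments to turn the rational retraction into a statement about structure sheaves.
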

\begin{proof}
By Lemma \ref{4blem1} below, we have $\vp_+=\agp(\vp_0)$, and hence
\[\Phi(\ascl_{\wl}) = (\id +\agp)^{-1}(\vp_0+\vp_+)= (\id +\agp)^{-1}\circ(\id +\agp)(\vp_0) = \vp_0.\]
\end{proof}

Before proving Lemma \ref{4blem1} which is used in the proof of Proposition \ref{4bprop}, we first prove another lemma. Recall morphism \eqref{3eemb}. Its domain is $F_{\wl}\times_{(h_{\wl},\ev_1)}\MM_{0,2}(G/P,\b-\b_{\wll})$ if $\b\ne\b_{\wll}$ and $F_{\wl}$ if $\b=\b_{\wll}$.
\begin{lemma} \label{4blem2}
We have
\[\ev_*[\OO_{\MMw}]=\left\{
\begin{array}{cc}
(\ev'_2)_* [\OO_{F_{\wl}\times_{(h_{\wl},\ev_1)}\MM_{0,2}(G/P,\b-\b_{\wll})}] & \b\ne\b_{\wll}\\ [1em]
(h_{\wl})_* [\OO_{F_{\wl}}] & \b=\b_{\wll}
\end{array}\right.\]
where $\ev'_2$ is induced by the evaluation map on $\MM_{0,2}(G/P,\b-\b_{\wll})$ at the second marked point.
\end{lemma}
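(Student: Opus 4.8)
The plan is to identify $\ev_*[\OO_{\MMw}]$ with the pushforward coming from morphism \eqref{3eemb} by exploiting irreducibility and a comparison of the two structure sheaves. Fix $\b\in\b_{\wll}+\ec$ and write $Z$ for the domain of \eqref{3eemb}, which is $F_{\wl}\times_{(h_{\wl},\ev_1)}\MM_{0,2}(G/P,\b-\b_{\wll})$ when $\b\ne\b_{\wll}$ and $F_{\wl}$ when $\b=\b_{\wll}$, and let $\kappa:Z\to\MMw$ denote \eqref{3eemb}. By Proposition \ref{3emain}, $Z$ is (set-theoretically) the unique attractive component of $\MMw^{\ct}$. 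By Corollary \ref{3ecor2}, $\MMw$ is irreducible; by \cite[Proposition 4.5]{me} it is smooth and of expected dimension. The key point is to run the Bia\l{ynicki}-Birula decomposition of Theorem \ref{3ebb}/\ref{3ebbstack} for the $\ct$-action on $\MMw$: the attractive component $\FF$ (the image of $\kappa$) has an open BB-cell $\UUU\subseteq\MMw$ which is a $\ct$-equivariant affine fibration over $\FF$. The composite $\ev\circ\kappa:Z\to G/P$ is, by the very definition of $\ev$ on $\MMw$ (Definition \ref{3bdefw}) and of $h_{\wl}$ (Definition \ref{3edef}), equal to the map appearing on the right-hand side: on the image it reads off the value at $\infty$ of the glued section $u_1\#u_2$, which is $u(z_2)$, i.e. the evaluation at the second marked point $\ev_2$ precomposed with $\kappa$ (for $\b=\b_{\wll}$ it is $h_{\wl}$ itself). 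So it remains to compare $\ev_*[\OO_{\MMw}]$ with $(\ev\circ\kappa)_*[\OO_Z]$.

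First I would reduce the computation of $\ev_*[\OO_{\MMw}]$ to the open BB-cell $\UUU$. Since $\MMw$ is irreducible and $\UUU$ is dense and open, and since the ambient stack is smooth, the complement $\MMw\setminus\UUU$ has strictly smaller dimension, hence $[\OO_{\MMw}]$ and $[\OO_{\ol\UUU}]=i_*[\OO_\UUU]$ (where $i:\UUU\hookrightarrow\MMw$ is the open immersion) differ by classes supported on the lower-dimensional complement; applying $\ev_*$ and using that $\ev$ restricted to this complement factors through a lower-dimensional target-cycle argument — more cleanly, using the excision/localization sequence in $K$-theory as in \eqref{2cseq} together with $p_*[\OO_\YYY]=[\OO_Y]$ type reductions of Remark \ref{2abrmk} — one sees $\ev_*[\OO_{\MMw}]=\ev_*[\OO_{\ol\UUU}]$. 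Actually the cleanest route, which is surely what the paper intends, is: the affine fibration $q:\UUU\to\FF$ satisfies $q_*[\OO_\UUU]=[\OO_\FF]$ (structure sheaf of an affine-space bundle pushes to the structure sheaf of the base, by the projection formula and $R q_*\OO=\OO_\FF$); and $\ev|_\UUU=(\ev\circ\kappa)\circ q$ after identifying $\FF$ with the reduced image of $\kappa$, so $\ev_*[\OO_\UUU]=(\ev\circ\kappa)_*[\OO_\FF]$.

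Next I would handle the discrepancy between $[\OO_\FF]$ on the reduced fixed-point component and $\kappa_*[\OO_Z]$. Here one needs that $\kappa:Z\to\FF$ is an isomorphism onto its image with reduced image, or at least that $\kappa_*[\OO_Z]=[\OO_{\FF_{\mathrm{red}}}]$ in $K$-theory; injectivity of $\kappa$ was noted right after Definition \ref{3eembdef}, and $Z$ is irreducible by Kim-Pandharipande \cite{KP}. Combined with smoothness of $\FF$ (as a component of the fixed locus of a $\ct$-action on a smooth DM stack, $\FF$ is smooth) and the fact that $Z$ is smooth of the expected dimension matching $\dim\FF$, the map $\kappa$ is an isomorphism onto $\FF$, so $\kappa_*[\OO_Z]=[\OO_\FF]$. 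Chaining everything: $\ev_*[\OO_{\MMw}]=\ev_*[\OO_\UUU]=(\ev\circ\kappa)_*\kappa_*[\OO_Z]$, wait — more precisely $\ev|_\UUU=(\ev\circ\kappa)\circ q$ gives $\ev_*[\OO_{\MMw}]=(\ev\circ\kappa)_*q_*[\OO_\UUU]=(\ev\circ\kappa)_*[\OO_\FF]=(\ev\circ\kappa)_*[\OO_Z]$, and $\ev\circ\kappa=\ev'_2$ (resp. $h_{\wl}$) by the above identification, which is exactly the claimed formula.

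The main obstacle I anticipate is the first reduction step — passing rigorously from $[\OO_{\MMw}]$ to the pushforward along the open BB-cell and controlling the contribution of the non-open BB-cells. One must be careful that $\MMw$ is a Deligne-Mumford stack, not a scheme, so one should invoke Oprea's stacky BB-decomposition \cite{Oprea} (Theorem \ref{3ebbstack} and its ambient statement) to get a stratification of $\MMw$ by $\ct$-equivariant affine fibrations, and then argue that only the open stratum $\UUU$ contributes to $\ev_*[\OO_{\MMw}]$: the closed strata have strictly smaller dimension, so by the excision sequence \eqref{2cseq} (applied inductively to peel off strata) plus the vanishing of the relevant pushforwards, or more simply by observing that $[\OO_{\MMw}]-[\OO_{\ol\UUU}]$ is a $\ZZ$-combination of structure sheaves of substacks of smaller dimension whose images under $\ev$ are therefore lower-dimensional — combined with the fact that everything in sight is proper and the coarse-space reduction of Remark \ref{2abrmk} lets us work with schemes — one concludes $\ev_*[\OO_{\MMw}]=\ev_*[\OO_{\ol\UUU}]$. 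A subtlety worth flagging: one should confirm the $\ct$-action (after the reparametrization mentioned in Remark \ref{4brmk}/Corollary \ref{3ecor2}) indeed has $\FF$ as the \emph{attractive} component so that its cell is \emph{open} and dense, which is precisely the content of Proposition \ref{3emain}. Everything else (the $q_*\OO_\UUU=\OO_\FF$ computation, the identification $\kappa_*\OO_Z=\OO_\FF$, and $\ev\circ\kappa=\ev'_2$ or $h_{\wl}$) is routine and follows directly from the definitions in Section \ref{3e}.
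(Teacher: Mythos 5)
Your proposal shares the paper's starting intuition---exploit the BB decomposition for the $\ct$-action on $\MMw$ and the fact that the attractive component is, set-theoretically, the image of \eqref{3eemb}, and observe that $\ev\circ\kappa$ equals $\ev_2'$ (resp. $h_{\wl}$)---but the central step is broken.

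The claim that the affine fibration $q:\UUU\to\FF$ satisfies $q_*[\OO_\UUU]=[\OO_\FF]$ is false, and in fact the pushforward is not even defined: $q$ is an affine-space fibration, hence not a proper morphism, and $K$-theory pushforward requires properness; moreover $R^0q_*\OO_\UUU$ is not coherent (for $\AA^1\to\mathrm{pt}$ it is the full polynomial ring). You seem to be borrowing the identity $Rq_*\OO=\OO$ from projective-space bundles, but it fails for affine bundles. Similarly, the proposed reduction $\ev_*[\OO_{\MMw}]=\ev_*[\OO_{\ol\UUU}]$ is vacuous (since $\MMw$ is irreducible and $\UUU$ dense, $\ol\UUU=\MMw$), and the remaining attempt to ``restrict to $\UUU$'' and conclude via excision never produces a well-defined class whose $\ev_*$ you can compare, because there is no canonical extension of $[\OO_\UUU]$ to a class on $\MMw$ that could be fed into $\ev_*$. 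You also assert that $\kappa$ is an isomorphism onto $\FF$; the paper is careful not to claim this (it remarks that \eqref{3eemb} is injective and could be shown to be a closed immersion, but does not use it), and after passing to coarse moduli it only establishes that $\iota$ restricted to a nonempty open $V$ is an isomorphism onto its image.

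The paper's actual route around the non-properness is essential and is missing from your proposal: pass to the coarse moduli $M$ and $F$ (via Remark \ref{2abrmk}); use the BB affine fibration $U\to F'\subset M^{sm}$ only to define a $T$-equivariant rational map $\phi:M\dashrightarrow F$; resolve the indeterminacy of $\phi$ by a $T$-equivariant resolution $Z$ with proper morphisms $\nu_M:Z\to M$ (birational) and $\nu_F:Z\to F$ satisfying $\ev\circ\nu_M=\ev_F\circ\nu_F$; then use that finite quotient singularities are rational to get $(\nu_M)_*[\OO_Z]=[\OO_M]$ and invoke \cite[Theorem 3.1]{BM} (surjectivity, rational singularities of $Z$ and $F$, rational general fiber of $\nu_F$) to get $(\nu_F)_*[\OO_Z]=[\OO_F]$. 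These two proper-pushforward identities replace the non-existent affine-fibration pushforward and are what actually closes the argument.
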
 
\begin{proof}
By Remark \ref{2abrmk}, it suffices to verify the corresponding equality for their coarse moduli. Let $M$ and $F$ be the coarse moduli of $\MMw$ and the domain of \eqref{3eemb}. They are projective varieties with only finite quotient singularities. Since $\MMw$ is irreducible by Corollary \ref{3ecor2} and the canonical morphism $\MMw\ra M$ is surjective, $M$ is also irreducible. Similarly, $F$ is irreducible. By the definition of coarse moduli, morphism \eqref{3eemb} induces a unique morphism $\iota:F\ra M$ which is equal to \eqref{3eemb} set-theoretically. The uniqueness implies that $\iota$ is $T$-equivariant.

Denote by $M^{sm}$ the smooth locus of $M$. It is easy to see that $\iota^{-1}(M^{sm})\ne\emptyset$ (look at chains of embedded spheres) and $\iota$ maps $\iota^{-1}(M^{sm})$ bijectively onto an attractive component $F'$ of $(M^{sm})^{\ct}$. Since $\iota^{-1}(M^{sm})$ is reduced, $\iota|_{\iota^{-1}(M^{sm})}$ factors through the inclusion $F'\hookrightarrow M^{sm}$. There exists a non-empty open subscheme $V\subseteq \iota^{-1}(M^{sm})$ such that $\iota|_V$ is smooth over $F'$. Since $\iota$ is injective, $\iota|_V$ is \'etale over $F'$ and hence an isomorphism onto its image. 

By Theorem \ref{3ebb}, there exists a $\ct$-invariant open subscheme $U\subseteq M^{sm}$ containing $F'$ and an affine fibration $U\ra F'$. The latter morphism induces, via the morphism $\iota|_V:V\xrightarrow{\sim}\iota(V)\subseteq F'$, a rational map $\phi:M\dashrightarrow F$. Since $\iota$ is $T$-equivariant, $U$ is unique and the $T$-action commutes with the $\ct$-action (see Section \ref{3c}(2)(i)), it follows that everything is $T$-equivariant. 

By resolving the indeterminacy locus of $\phi$, we obtain a smooth irreducible projective variety $Z$ and morphisms $\nu_M:Z\ra M$ and $\nu_F:Z\ra F$ such that $\nu_M$ is birational and $\phi\circ \nu_M=\nu_F$. Since equivariant resolutions of singularities exist (see e.g. \cite{equivresolution}), we may assume $Z$ has a $T$-action and $\nu_M$, $\nu_F$ are $T$-equivariant. Define $\ev_F:=\ev_2'$ if $\b\ne\b_{\wll}$ and $\ev_F:=h_{\wl}$ otherwise. By the fact that $\ev$ is $\ct$-invariant, we have $\ev=\ev_F\circ\phi$, and hence $\ev\circ\nu_M=\ev_F\circ \nu_F$, giving
\[ \ev_*(\nu_M)_*[\OO_Z] = (\ev_F)_*(\nu_F)_*[\OO_Z]\in K_T(G/P).\]

Since $Z$ has only finite quotient singularities and singularities of this kind are rational, by \cite{fqimpliesr}, we have $(\nu_M)_*[\OO_Z]=[\OO_M]$. To conclude the proof, it suffices to show $(\nu_F)_*[\OO_Z]=[\OO_F]$. This follows from \cite[Theorem 3.1]{BM}, given the following conditions:
\begin{enumerate}
\item $\nu_F$ is surjective and $T$-equivariant;
\item $Z$ and $F$ are projective with rational singularities; and
\item the general fiber of $\nu_F$ is rational. 
\end{enumerate}
Condition (1) is obvious. To verify (2), we use the above cited result \cite{fqimpliesr}. For (3), take a non-empty open subscheme $W\subseteq V$ such that $\nu_F|_{\nu_F^{-1}(W)}$ is smooth. Since $Z$ contains an open dense subscheme $U'$ such that $\nu_F|_{U'}$ is an affine fibration over $W$, it follows that every geometric fiber of $\nu_F|_{\nu_F^{-1}(W)}$ is connected and contains the affine space as an open subscheme, i.e. it is rational.
\end{proof}

\begin{lemma} \label{4blem1}
$\vp_+=\agp(\vp_0)$.
\end{lemma}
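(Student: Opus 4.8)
The plan is to unwind the definitions of $\vp_0$, $\vp_+$ and $\agp$ and show that the two expressions literally agree term by term. Recall that
\[
\vp_0 = \sum_{i,j\in I} q^{\b_{\wll}} g^{ij}\,\chi_{\MMw[\b_{\wll}]}(\ev^*e_i)\,e_j,
\qquad
\vp_+ = \sum_{i,j\in I}\sum_{\b\in\b_{\wll}+(\ec\setminus\{0\})} q^{\b} g^{ij}\,\chi_{\MMw}(\ev^*e_i)\,e_j,
\]
while by the definition of $\agp$ (Section \ref{2b}), writing $\b'=\b-\b_{\wll}$,
\[
\agp(\vp_0) = \sum_{k,l\in I}\sum_{\b'\in\ec\setminus\{0\}} q^{\b'} g^{kl}\,\gw^{\b'}(\vp_0, e_k)\, e_l.
\]
So the first step is purely formal: expand $\gw^{\b'}(\vp_0,e_k) = \sum_{i,j}q^{\b_{\wll}}g^{ij}\chi_{\MMw[\b_{\wll}]}(\ev^*e_i)\,\gw^{\b'}(e_j,e_k)$, substitute, and re-index so that the target of the comparison is a statement about the coefficient of $q^{\b}e_l$ in each side for fixed $\b=\b_{\wll}+\b'$ with $\b'\neq 0$.

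The geometric input is Lemma \ref{4blem2}, which I would use in its second-marked-point pushforward form: for $\b\ne\b_{\wll}$,
\[
\ev_*[\OO_{\MMw}] = (\ev_2')_*[\OO_{F_{\wl}\times_{(h_{\wl},\ev_1)}\MM_{0,2}(G/P,\b')}].
\]
Applying $\chi_{G/P}(-\otimes e_i)$ to this and using the projection formula together with the standard factorization of a fiber product pushforward, $\chi_{\MMw}(\ev^*e_i)$ becomes a sum over the Schubert basis: insert a complete set $\sum_{m,n}g^{mn}e_m\otimes e_n$ at the node identifying $h_{\wl}$ with $\ev_1$, so that
\[
\chi_{\MMw}(\ev^*e_i) \;=\; \sum_{m,n\in I} g^{mn}\,\chi_{F_{\wl}}(h_{\wl}^*e_m)\;\gw^{\b'}(e_n,e_i).
\]
But by the $\b=\b_{\wll}$ case of Lemma \ref{4blem2}, $\ev_*[\OO_{\MMw[\b_{\wll}]}] = (h_{\wl})_*[\OO_{F_{\wl}}]$, so $\chi_{F_{\wl}}(h_{\wl}^*e_m) = \chi_{\MMw[\b_{\wll}]}(\ev^*e_m)$. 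Substituting this back and comparing with the expansion of $\agp(\vp_0)$ above — matching the dummy indices $(m,n)\leftrightarrow(i,j)$ of $\vp_0$ and $(i)\leftrightarrow(k)$, $(l)$ unchanged — gives exactly the desired identity once one notes that the metric sums collapse: $\sum_{i}g^{ij}(\text{something})_i$ against $\sum$ over $e_i$ reproduces the $g^{kl}$-contraction in $\agp$.

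The main obstacle is making the fiber-product decomposition rigorous at the level of $K$-theory classes rather than just on a formal level: one must justify that $\chi$ of a structure sheaf over the fiber product $F_{\wl}\times_{(h_{\wl},\ev_1)}\MM_{0,2}(G/P,\b')$ decomposes as the inserted-diagonal sum displayed above. This is the $K$-theoretic analogue of the splitting axiom: it follows from base change and the fact that $[\OO_{\Delta_{G/P}}] = \sum_{m,n}g^{mn}\,e_m\boxtimes e_n$ in $K_T(G/P\times G/P)$ (the definition of $\{g^{mn}\}$ as the inverse Gram matrix of the pairing $\chi_{G/P}(-\otimes-)$), applied along the cartesian square defining the fiber product. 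One has to be a little careful because the domain of morphism \eqref{3eemb} need not be of expected dimension, so $[\OO]$ is not obviously $[\OO^{vir}]$ there; but we are only pushing forward the honest structure sheaf (that is what appears in the definition of $\vp$, the $(\id+\agp)^{-1}$ having been stripped off), and Lemma \ref{4blem2} was proved precisely at the level of honest structure sheaves and coarse moduli, so the base-change argument goes through on the coarse spaces as in Remark \ref{2abrmk}. I would then close by remarking that the re-indexed sums coincide, so $\vp_+=\agp(\vp_0)$.
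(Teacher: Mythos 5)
Your proof is correct and is essentially the paper's own argument: both hinge on the two cases of Lemma~\ref{4blem2} together with the flatness of $\ev_1$ on $\MM_{0,2}(G/P,\b-\b_{\wll})$; the paper packages the resulting identity as flat base change applied to $\ev_*[\OO_{\MMw}] = (\ev_2)_*(\ev_1)^*(h_{\wl})_*[\OO_{F_{\wl}}]$ and then recognizes the $q^\b$-weighted sum as $\agp$ acting on $\vp_0$, whereas you unwind everything in the Schubert basis and invoke the diagonal decomposition $[\OO_{\Delta_{G/P}}] = \sum g^{mn}e_m\boxtimes e_n$ to split the fiber-product pushforward. Your caveat about the virtual versus honest structure sheaf is well taken and correctly resolved: the definition of $\vp$ involves $\OO_{\MMw}^{vir}=[\OO_{\MMw}]$ by smoothness of expected dimension, and Lemma~\ref{4blem2} is stated and proved precisely for the honest structure sheaves at the level of coarse moduli.
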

\begin{proof} By the projection formula,
\[ \vp_+= \sum_{\b\in\b_{\wll}+(\ec\setminus\{0\})}q^{\b}\ev_*[\OO_{\MMw}]\]
which is equal to $\sum_{\b\in \b_{\wll}+(\ec\setminus\{0\})} q^{\b} (\ev'_2)_*[\OO_{F_{\wl}\times_{(h_{\wl},\ev_1)}\MM_{0,2}(G/P,\b-\b_{\wll})}]$ by Lemma \ref{4blem2}. 

Denote by $\ev_1$ and $\ev_2$ the evaluation maps on $\MM_{0,2}(G/P,\b-\b_{\wll})$. Since $\ev_1$ is flat, we have
\begin{equation}\label{4beq} 
(\ev'_2)_*[\OO_{F_{\wl}\times_{(h_{\wl},\ev_1)}\MM_{0,2}(G/P,\b-\b_{\wll})}] = (\ev_2)_*(\ev_1)^*(h_{\wl})_*[\OO_{F_{\wl}}],
\end{equation}
by the base change formula. Summing up \eqref{4beq} over all $\b\in\b_{\wll}+(\ec\setminus\{0\})$, weighted by $q^{\b}$, we get $\vp_+=\agp(q^{\b_{\wll}}(h_{\wl})_*[\OO_{F_{\wl}}])$. By Lemma \ref{4blem2} applied to $\b=\b_{\wll}$, we get $\vp_0=q^{\b_{\wll}}(h_{\wl})_*[\OO_{F_{\wl}}]$. The result follows. 
\end{proof}
\subsection{Step 2 of the computation: determining the initial term} \label{4c} 
By Proposition \ref{4bprop}, it suffices to determine $\vp_0$. By Lemma \ref{4blem2} applied to $\b=\b_{\wll}$, we have 
\begin{equation}\label{4ceq1} 
\vp_0 =q^{\b_{\wll}}(h_{\wl})_*[\OO_{F_{\wl}}] 
\end{equation}
where $F_{\wl}$ and $h_{\wl}$ are defined in Definition \ref{3edef}. Since $\fm{\ct}{\Gw}$ is $B^-$-invariant and $f=f_{\wl}|_{\fm{\ct}{\Gw}}$ is $B^-$-equivariant, $f$ is birational onto the Schubert variety $\ol{B^-\cdot x_{\wll}}\subseteq G\cdot x_{\wll}\simeq G/P_{\wll}$. Since Schubert varieties have rational singularities (see Lemma \ref{4bratsing}), we have, by the base change formula, 
\begin{equation}\label{4ceq2}
(h_{\wl})_*[\OO_{F_{\wl}}]=j_*[\OO_{\pi^{-1}(\ol{B^-\cdot x_{\wll}})}].
\end{equation}

\noindent See \eqref{3edia} for the definition of $j$ and $\pi$.

Let us deal with the case $P=B^+$ first. Define $y_w':=wB^+\in G/B^+$ and $B_w:= wBw^{-1}$. 
\begin{lemma}\label{4clem2} $~$
\begin{enumerate}
\item We have $y_w'\in\fm{\cw}{G/B^+}$ and $B_w\subseteq P_{\wll}$.

\item There exists a $G$-equivariant isomorphism 
\[ \Gt{P_{\wll}}\fm{\cw}{G/B^+}\simeq G/B_w\]
under which $\pi$ and $j$ (see diagram \eqref{3edia}) are identified with the projection $G/B_w\ra G/P_{\wll}$ and the isomorphism $G/B_w\xrightarrow{\sim} G/B^+: gB_w\mapsto gwB^+$ respectively. 

\item $j$ maps $\pi^{-1}(\ol{B^-\cdot x_{\wll}})$ isomorphically onto $\ol{B^-\cdot y_w'}$. 
\end{enumerate}
\end{lemma}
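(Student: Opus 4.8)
\textbf{Proof proposal for Lemma \ref{4clem2}.}

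The plan is to reduce everything to concrete statements about $\cw$-weights on the tangent space of $G/B^+$ at $T$-fixed points, combined with standard facts about Schubert varieties. First I would establish (1). For the claim $y_w'\in\fm{\cw}{G/B^+}$, note that the tangent space $T_{y_w'}(G/B^+)$ decomposes, as a $\cw$-module, into weight spaces $\CC_{\a(\wll)}$ where $\a$ ranges over the roots in $-w(R^+)$ (the roots $\a$ with $\gg_\a\subseteq wB^-w^{-1}$ modulo the Cartan). Since $\wl\in W_{af}^-$, one has $\a(\wll)>0$ for every such $\a$: this is exactly the statement that $wB^+w^{-1}$ is the parabolic whose Lie algebra consists of the non-negative $\cw$-weight spaces, i.e. $B_w = wB^+w^{-1}\subseteq P_{\wll}$, which simultaneously gives the second half of (1). (The containment $B_w\subseteq P_{\wll}$ should be checked directly against the description \eqref{3deq} of $\lie(P_\mu)$; the key point is that $\wl$ being a minimal-length representative in $W_{af}/W$ forces the appropriate positivity.) Having all weights positive at $y_w'$ means $y_w'$ lies in an attractive component of $(G/B^+)^{\cw}$; since $\fm{\cw}{G/B^+}$ is \emph{the} attractive component and $y_w'$ is a $\cw$-fixed point with strictly positive normal weights, $y_w'\in\fm{\cw}{G/B^+}$ (indeed it should be a single point, or at least $\fm{\cw}{G/B^+}$ contains it).

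For (2), once $B_w\subseteq P_{\wll}$ is known, $\fm{\cw}{G/B^+}$ is a single $P_{\wll}$-orbit by Lemma \ref{3dtrans}, and its stabilizer in $P_{\wll}$ at $y_w'$ is $P_{\wll}\cap B_w = B_w$ (using $B_w\subseteq P_{\wll}$), so $\fm{\cw}{G/B^+}\simeq P_{\wll}/B_w$ as $P_{\wll}$-varieties. Inducing up, $\Gt{P_{\wll}}\fm{\cw}{G/B^+} = G\times_{P_{\wll}}(P_{\wll}/B_w)\simeq G/B_w$. Under this identification the projection $\pi$ to $G/P_{\wll}\simeq G\cdot x_{\wll}$ is visibly the natural quotient $G/B_w\to G/P_{\wll}$, and the map $j$, being the $G$-equivariant extension of the inclusion $\fm{\cw}{G/B^+}=P_{\wll}/B_w\hookrightarrow G/B^+$, $pB_w\mapsto pwB^+$, extends to $gB_w\mapsto gwB^+$, which is an isomorphism $G/B_w\xrightarrow{\sim}G/B^+$ since $B_w=wB^+w^{-1}$.

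For (3), $\pi^{-1}(\ol{B^-\cdot x_{\wll}})$ is, under the isomorphism of (2), the preimage in $G/B_w$ of the Schubert variety $\ol{B^-\cdot x_{\wll}}\subseteq G/P_{\wll}$; but $\pi$ maps $\fm{\ct}{\Gw}$ birationally onto $\ol{B^-\cdot x_{\wll}}$ via a $B^-$-equivariant map, so $\pi^{-1}(\ol{B^-\cdot x_{\wll}})$ is the closure of a single $B^-$-orbit, namely $\ol{B^-\cdot(y_w' \text{ lifted})}$. Applying $j$, which by (2) is the isomorphism $gB_w\mapsto gwB^+$, the point $eB_w$ goes to $wB^+=y_w'$ and $B^-$-equivariance is preserved, so $j$ carries this closure isomorphically onto $\ol{B^-\cdot y_w'}$. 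I expect the main obstacle to be the careful bookkeeping in (1)---pinning down precisely which roots occur as $\cw$-weights at $y_w'$ and verifying their positivity from $\wl\in W_{af}^-$, i.e. the interface between the affine combinatorics of $W_{af}^-$ and the description \eqref{3deq} of $P_{\wll}$; once that positivity is in hand, (2) and (3) are formal consequences of the orbit description and rational-singularity facts already recalled in the excerpt.
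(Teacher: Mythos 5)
Your strategy for parts (1) and (2) is essentially the paper's, but a few signs are off and, more seriously, part (3) has a genuine gap.

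\textbf{On (1).} You assert that $\a(\wll)>0$ for every $\a\in -wR^+$; in fact one only gets $\a(\wll)\geqslant 0$, since $\llll$ can lie on walls (indeed the entire point of (3) is to handle precisely the roots with $\a(\wll)=0$). Non-negativity is what makes $y_w'$ lie in the attractive component; the parenthetical suggestion that $\fm{\cw}{G/B^+}$ ``should be a single point'' is wrong in general --- it is the $P_{\wll}$-orbit $P_{\wll}/B_w$, which is positive-dimensional as soon as $\llll$ is on a wall. You also say $B_w$ is ``the parabolic whose Lie algebra consists of the non-negative $\cw$-weight spaces''; by \eqref{3deq}, $\lie(P_{\wll})$ is the sum of the \emph{non-positive} weight spaces, and what you need for $B_w\subseteq P_{\wll}$ is that $\lie(B_w)=\hh\oplus\bigoplus_{\a\in wR^+}\gg_\a$ has non-positive weights, i.e. $\a(\wll)\leqslant 0$ for $\a\in wR^+$. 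With these sign corrections the argument matches the paper's.

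\textbf{On (2).} Same as the paper; fine.

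\textbf{On (3).} Here the proof does not go through. You write that ``$\pi$ maps $\fm{\ct}{\Gw}$ birationally onto $\ol{B^-\cdot x_{\wll}}$'': that is a statement about the map $f=f_{\wl}|_{\fm{\ct}{\Gw}}$, not about $\pi$ in diagram \eqref{3edia} (which, after the identification in (2), is the projection $G/B_w\to G/P_{\wll}$), and it has no bearing on the shape of the fiberwise preimage. More importantly, the inference ``so $\pi^{-1}(\ol{B^-\cdot x_{\wll}})$ is the closure of a single $B^-$-orbit'' is vacuous: \emph{every} closed irreducible $B^-$-invariant subvariety of $G/B_w$ is the closure of a single $B^-$-orbit. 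The content of (3) is identifying \emph{which} orbit, and you simply assert it is the one through $eB_w$ with no justification. In general $\pi^{-1}(\ol{B^-\cdot x_{\wll}})=\bigcup_{u\in W_{P_{\wll}}}\ol{B^-\,uB_w}$, and this equals $\ol{B^-\,eB_w}$ (equivalently, after applying $j$, equals $\ol{B^-\cdot y_w'}$) precisely when $w$ is the minimal-length representative of the coset $W_{P_{\wll}}w$, i.e.\ when $\a\in R^+$ and $\a(\wll)=0$ imply $w^{-1}\a\in R^+$. This is exactly condition \eqref{4bsmallest}, which is nontrivial and is where the hypothesis $\wl\in W_{af}^-$ actually gets used: the paper proves it via an alcove-geometry argument (the minimal alcove $\wl(\Delta)=\wll+w(\Delta)$ containing $\wll$ must lie on the same side of any wall through $\wll$ as the dominant alcove $\Delta$). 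Your proposal never formulates or establishes this combinatorial fact, so the identification of the image of $j$ is unproved.
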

\begin{proof}
(1) is proved by looking at the weight spaces:
\[ T_{y_w'}(G/B^+)\simeq \bigoplus_{\a\in -wR^+}\gg_{\a}\quad\text{ and }\quad \lie(B_w)=\hh\oplus\bigoplus_{\a\in wR^+}\gg_{\a}.\]
Since $\wl\in W_{af}^-$, $\llll$ is anti-dominant, and hence $\pm\a(\wll)\geqslant 0$ for any $\a\in \mp wR^+$. 

To prove (2), recall (Lemma \ref{3dtrans}) $P_{\wll}$ acts transitively on $\fm{\cw}{G/B^+}$. Hence, by (1), we have $\fm{\cw}{G/B^+}\simeq P_{\wll}/B_w$ so that 
\[ \Gt{P_{\wll}}\fm{\cw}{G/B^+}\simeq \Gt{P_{\wll}}(P_{\wll}/B_w)\simeq G/B_w.\]
The rest of the proof is clear.

For (3), we use the identifications in (2). Denote by $C$ the dominant Weyl chamber. It suffices to show that $wC$ has the smallest length (with respect to $C$) among other chambers which contain $-\wll$. It amounts to showing 
\begin{equation}\label{4bsmallest}
\a(\wll)=0\text{ and }\a\in R^+\Longrightarrow w^{-1}\a\in R^+.
\end{equation}
This requires the assumption $\wl\in W_{af}^-$. Denote by $\Delta$ the dominant alcove. By definition, the alcove $\wl(\Delta)=\wll+w(\Delta)$ has the smallest length (with respect to $\Delta$) among other alcoves which contain $\wll$. If $\a(\wll)=0$, then $\wll+w(\Delta)$ and $\Delta$ lie in the same side with respect to the wall $\{\a=0\}$. This proves \eqref{4bsmallest}.
\end{proof}

For general $P$, we have the commutative diagram 
\begin{center}

\vspace{.2cm}
\begin{tikzpicture}
\tikzmath{\x1 = 3; \x2 = 1.5; \x3=3.2;}
\node (A) at (0,0) {$G\cdot x_{\wll}$} ;
\node (B) at (-\x1,-\x2) {$\Gt{P_{\wll}}\fm{\cw}{G/B^+}$} ;
\node (C) at (\x1,-\x2) {$\Gt{P_{\wll}}\fm{\cw}{G/P}$} ;
\node (D) at (-\x1,-\x3) {$G/B^+$};
\node (E) at (\x1,-\x3) {$G/P$};
\node (F) at (-0.88*\x1,-0.5*\x3-0.55*\x2) {$\simeq$};

\node at (-0.6*\x1, -0.3*\x2) {{\scriptsize $\pi_{G/B^+}$}};
\node at (0.58*\x1, -0.3*\x2) {{\scriptsize $\pi_{G/P}$}};
\path[->, font=\scriptsize]
(B) edge node[anchor = south]{} (C);
\path[->, font=\scriptsize]
(D) edge node[anchor = south]{$p$} (E);
\path[->, font=\scriptsize]
(B) edge node[anchor = south east]{} (A);
\path[->, font=\scriptsize]
(C) edge node[anchor = south west]{} (A);
\path[->, font=\scriptsize]
(B) edge node[left]{$j_{G/B^+}$} (D);
\path[->, font=\scriptsize]
(C) edge node[right]{$j_{G/P}$} (E);
\end{tikzpicture}
\end{center}
where the horizontal arrows are some canonical projections. By Lemma \ref{4clem2} and the fact that the upper horizontal arrow has rational fibers, we have
\begin{equation}\label{4ceq3}
(j_{G/P})_*[\OO_{(\pi_{G/P})^{-1}(\ol{B^-\cdot x_{\wll}})}] = p_*[\OO_{\ol{B^-\cdot y_w'}}].
\end{equation}

Denote by $\widetilde{w}\in W/W_P$ the minimal length coset representative of $wW_P$. Notice $p(\ol{B^-\cdot y_w'})=\ol{B^-\cdot y_{\widetilde{w}}}$ but the dimension of some fibers of $p|_{\ol{B^-\cdot y_w'}}$ may be positive. Choose Bott-Samelson resolutions $\G'\ra \ol{B^-\cdot y_w'}$ and $\G\ra \ol{B^-\cdot y_{\widetilde{w}}}$ such that there is a map $p':\G'\ra \G$ defined by forgetting last few factors of $\G'$ which fits into the commutative diagram 
\begin{equation}\nonumber
\begin{tikzcd} 
 \G'  \arrow{d}[left]{p'} \arrow{r} &[1.0em] \ol{B^-\cdot y_w'} \arrow{d}{p}   \\
 \G \arrow{r}  &[1.0em]  \ol{B^-\cdot y_{\widetilde{w}}}
\end{tikzcd}.
\end{equation}

\noindent By the facts that $p'$ has rational fibers and Schubert varieties have rational singularities, we have 
\begin{equation}\label{4ceq4} 
p_*[\OO_{\ol{B^-\cdot y_w'}}]=[\OO_{\ol{B^-\cdot y_{\widetilde{w}}}}].
\end{equation}

\noindent Alternatively, \eqref{4ceq4} follows from \cite[Theorem 3.1]{BM}.
 
\subsection{Proof of Theorem \ref{main}} \label{4d} 
We start with summarizing what we have done in the previous subsections. We defined an $\rep$-linear map $\Phi$ in Definition \ref{4adef} and proved in Proposition \ref{4aprop} that it is an $\rep$-algebra homomorphism. A priori, $\Phi$ was defined in terms of the localization basis $\{\localb_{\mu}\}_{\mu\in\Q}$. But by Lemma \ref{4bchangebasis}, it can also be defined in the same style in terms of the affine Schubert basis $\{\ascl_{\wl}\}_{\wl\in W_{af}^-}$. By Proposition \ref{4bprop}, \eqref{4ceq1}, \eqref{4ceq2}, \eqref{4ceq3} and \eqref{4ceq4}, we have
\[ \Phi(\ascl_{\wl})= q^{\b_{\wll}} [\OO_{\ol{B^-\cdot y_{\widetilde{w}}}}].\]

It remains to show that $\b_{\wll}$ corresponds to $\llll+\Q_P$ via the dual of isomorphism \eqref{2bisom}. By making use of the canonical projection $G/B^+\ra G/P$, we may assume $P=B^+$. Let $\rho\in (\Q)^*$. We have to show $\deg(u^*\LL_{\rho})=\rho(\llll)$ for the constant section $u$ of $\fibb{\wll}$ corresponding to a point of $\fm{\cw}{G/B^+}$. By Lemma \ref{4clem2}, we can take that point to be $y'_w:=wB^+$. Recall $L_{\rho}=\Gt{B^+}\CC_{-\rho}$ so that $(L_{\rho})_{y'_w}\simeq \CC_{-w\rho}$ as $T$-modules, and hence $(L_{\rho})_{y'_w}\simeq \CC_{-\rho(\llll)}$ as $\cw$-modules. Therefore, $u^*\LL_{\rho}\simeq\OO_{\PP^1}(\rho(\llll))$ as desired. The proof of Theorem \ref{main} is complete. 


\appendix  
\section{A K-theoretic degeneration formula} \label{app1}
Let $\mu\in\Q$. Recall the $G/P$-fibration $\fib{\mu}$ over $\PP^1$ and the projection $\pi_{\mu}:\fib{\mu}\ra \PP^1$ defined in Section \ref{3b}. Let $k\in\NN$. Fix some points $z_1,\ldots,z_k\in\PP^1$. For each $i=1,\ldots,k$, define $D_{\mu,i}:=\pi_{\mu}^{-1}(z_i)$ and $\iota_{\mu,i}:D_{\mu,i}\hookrightarrow \fib{\mu}$ to be the inclusion.
\begin{definition}\label{appdef1} Let $\b\in\pi_2(G/P)$. 
\begin{enumerate}
\item Define 
\[ \MM_k(\mu,\b):= \bigcup_{\widetilde{\b}}\MM_{0,k}(\fibc{{\mu}},\widetilde{\b})\times_{(\vec{\ev},\iota_{\mu,1}\times\cdots\times\iota_{\mu,k})}\left(D_{\mu,1}\times\cdots\times D_{\mu,k}\right)\]
where $\widetilde{\b}$ runs over all classes in $\pi_2(\fibc{{\mu}})$ satisfying
\begin{enumerate}[(i)]
\item $(\pi_{\mu})_*\widetilde{\b}=[\PP^1]\in\pi_2(\PP^1)$; and 
\item $\langle\widetilde{\b},c_1(\LL_{\rho})\rangle = \langle\b,c_1(L_{\rho})\rangle$ for any $\rho\in(\Q/\Q_P)^*$.
\end{enumerate}
(The line bundles $\LL_{\rho}$ and $L_{\rho}$ are defined in Section \ref{3a} and Section \ref{2b} respectively.)

\vspace{.2cm}
\item By abuse of notation, define 
\[\ev_i:\MM_k(\mu,\b)\ra D_{\mu,i}\simeq G/P\]
to be the morphism induced by the evaluation map $\ev_i$ on $\MM_{0,k}(\fibc{{\mu}},\widetilde{\b})$.
\end{enumerate}
\end{definition}

\begin{definition}\label{appdef2}$~$
\begin{enumerate}
\item Let $\g_1,\ldots,\g_k\in K_T(G/P)$ and $\b\in\pi_2(G/P)$. Define
\[\pgw^{\b}_{\mu}(\g_1,\ldots,\g_k) := \chi_{\MM_k(\mu,\b)}\left(\OO^{vir}_{\MM_k(\mu,\b)}\otimes \ev_1^*\g_1\otimes\cdots \otimes \ev_k^*\g_k \right)\in R(T)\]
where $\OO^{vir}_{\MM_k(\mu,\b)}$ is the virtual structure sheaf constructed in \cite{Lee}.

\item Let $\g_1,\ldots,\g_k\in K_T(G/P)$. Define
\[ \pgw_{\mu}(\g_1,\ldots,\g_k) := \sum_{\b\in\pi_2(G/P)} q^{\b}\pgw^{\b}_{\mu}(\g_1,\ldots,\g_k) \in R(T)\otimes\ZZ[[\ec,\ec^{-1}].  \]
(By Corollary \ref{3ecor1}, the ring $\ZZ[[\ec,\ec^{-1}]$ is large enough in order for $\pgw_{\mu}$ to be well-defined.)
\end{enumerate}
\end{definition}

Recall the notations $\{e_i\}_{i\in I}$, $\{g^{ij}\}_{i,j\in I}$ and $\agp$ defined in Section \ref{2c}. The following proposition is a $K$-theoretic degeneration formula which we need in order to prove Proposition \ref{4aprop}.
\begin{proposition}\label{appmain}
For any $\mu_1,\mu_2\in\Q$ and tuples $\vec{\g}^{(1)}=(\g^{(1)}_1,\ldots, \g^{(1)}_{k_1})$, $\vec{\g}^{(2)}=(\g^{(2)}_1,\ldots, \g^{(2)}_{k_2})$ of elements of $K_T(G/P)$,
\[ \pgw_{\mu_1+\mu_2}(\vec{\g}^{(1)},\vec{\g}^{(2)}) = \sum_{i,j\in I}g^{ij}\pgw_{\mu_1}\left(\vec{\g}^{(1)},(\id+\agp)^{-1}(e_i)\right) \pgw_{\mu_2}(\vec{\g}^{(2)},e_j).\]
\end{proposition}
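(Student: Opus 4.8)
The plan is to prove Proposition \ref{appmain} by a degeneration of the base $\PP^1$ of the fibration $\fibc{\mu_1+\mu_2}$, in the spirit of the arguments of Givental \cite{Givental} and Lee \cite{Lee} for the $K$-theoretic WDVV equation. First I would construct a flat family $\mathcal{C}\to\AA$ of at-worst-nodal curves whose generic fiber is $\PP^1$ and whose central fiber is $C_0=\PP^1\cup_{\mathrm{pt}}\PP^1$, together with a family of $G/P$-fibrations $\mathcal{X}\to\mathcal{C}$ such that $\mathcal{X}_t\simeq\fibc{\mu_1+\mu_2}$ for $t\ne 0$ and $\mathcal{X}_0$ restricts to $\fibc{\mu_1}$ on one component of $C_0$ and to $\fibc{\mu_2}$ on the other. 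The construction rests on the identity $x_{\mu_1+\mu_2}(z)=x_{\mu_1}(z)\,x_{\mu_2}(z)$ in the loop group: presenting $\PP^1$ with a three-chart atlas and placing the clutching maps $x_{\mu_1}(z)$ and $x_{\mu_2}(z)$ on the two overlaps reproduces $\fibc{\mu_1+\mu_2}$ (compare the pushout description of $\fibc{\mu}$ in Section \ref{3d}), and pinching the middle chart to a node yields $C_0$ with the two advertised fibrations glued along a copy of $G/P$ over the node. I would also let the marked fibers $D_{\mu,i}$ move in this family so that the first $k_1$ of them specialize to the first component and the remaining $k_2$ to the second, all staying away from the node.

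Next I would invoke deformation invariance. The moduli of stable maps into the fibers of $\mathcal{X}\to\mathcal{C}$ representing a section class relative to $\mathcal{C}$, with the $k_1+k_2$ marked points constrained to the moving fibers, forms a proper Deligne--Mumford stack over $\AA$ carrying Lee's virtual structure sheaf \cite{Lee} together with the evaluation maps over the whole family; since $\chi$ is deformation invariant in proper flat families, $\pgw_{\mu_1+\mu_2}(\vec\g^{(1)},\vec\g^{(2)})$ is computed from the restriction to $t=0$. (For the pieces over $t\ne 0$ the same argument as in \cite[Proposition 4.5]{me} shows they are smooth of expected dimension, so there the virtual sheaf is the ordinary structure sheaf; this is a convenience, not essential.)

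Over the central fiber, a section-class stable map to $\mathcal{X}_0\to C_0$ splits as a section-class stable map to $\fibc{\mu_1}$ over the first component, a section-class stable map to $\fibc{\mu_2}$ over the second, and a (possibly empty) tree of components lying entirely in the fiber $G/P$ over the node. Correspondingly, the $t=0$ moduli is, compatibly with its virtual structure sheaf, a union over $\b_1+\b_0+\b_2=\b$ of fiber products over the node-fiber $G/P$ of $\MM_{k_1+1}(\mu_1,\b_1)$, a connecting piece of class $\b_0$, and $\MM_{k_2+1}(\mu_2,\b_2)$; evaluating the fiber products via $[\OO_{\Delta_{G/P}}]=\sum_{i,j\in I}g^{ij}\,e_i\boxtimes e_j$ produces the prefactor $\sum_{i,j}g^{ij}$. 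The delicate point is the bookkeeping of the node-fiber components, because $\MM_{k_1+1}(\mu_1,-)$ and $\MM_{k_2+1}(\mu_2,-)$ each already allow a vertical bubble at the node marked point; attributing all of that to the $\mu_1$-factor (so that the connecting piece contributes nontrivially only through $\MM_{0,2}(G/P,\b_0)$ with $\b_0\ne 0$, the case $\b_0=0$ meaning the sections meet directly) overcounts unless one passes to the sub-moduli $\MM^{\circ}_{k_2+1}(\mu_2,-)$ with no node bubble. Stratifying by the class of that bubble gives $\pgw_{\mu_2}(\vec\g^{(2)},\alpha)=\pgw^{\circ}_{\mu_2}(\vec\g^{(2)},(\id+\agp)(\alpha))$, i.e.\ $\pgw^{\circ}_{\mu_2}(\vec\g^{(2)},\alpha)=\pgw_{\mu_2}(\vec\g^{(2)},(\id+\agp)^{-1}(\alpha))$, so the splitting formula reads
\[ \pgw_{\mu_1+\mu_2}(\vec\g^{(1)},\vec\g^{(2)})=\sum_{i,j\in I}g^{ij}\,\pgw_{\mu_1}(\vec\g^{(1)},e_i)\,\pgw_{\mu_2}\!\big(\vec\g^{(2)},(\id+\agp)^{-1}(e_j)\big). \]
Finally, using that $\agp$ is self-adjoint for the pairing $\chi_{G/P}(-\otimes-)$ (hence so is $(\id+\agp)^{-1}$), a short linear-algebra manipulation transports $(\id+\agp)^{-1}$ from the $\mu_2$-slot to the $\mu_1$-slot, giving exactly the statement. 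I expect the genuinely hard part to be twofold: making the degeneration family and its relative moduli precise enough to apply deformation invariance, and justifying that Lee's virtual structure sheaf is compatible with the splitting over $C_0$ (the $K$-theoretic degeneration formula itself), together with the node-bubble accounting that produces the $(\id+\agp)^{\pm 1}$ correction — everything else being the standard $K$-theoretic gluing computation via $[\OO_{\Delta_{G/P}}]$.
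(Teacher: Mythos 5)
Your high-level route is the same as the paper's: degenerate $\PP^1$ to a nodal curve $C_1\cup C_2$ so that $\fibc{\mu_1+\mu_2}$ degenerates to $\fibc{\mu_1}$ glued to $\fibc{\mu_2}$ along a copy of $G/P$, run the moduli over the whole family $\AA$, and then split the central-fiber contribution via $[\OO_{\Delta_{G/P}}]=\sum_{i,j}g^{ij}e_i\boxtimes e_j$. The self-adjointness argument you use at the end to move $(\id+\agp)^{-1}$ between slots is fine (and is a redundancy: the paper's accounting places it directly in the $\mu_1$-slot by symmetry of the argument).

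The gap is in the step you flag as delicate: the ``node-bubble accounting.'' You assert ``stratifying by the class of that bubble gives $\pgw_{\mu_2}(\vec\g^{(2)},\alpha)=\pgw^{\circ}_{\mu_2}(\vec\g^{(2)},(\id+\agp)(\alpha))$,'' but this relation is neither derived nor well-posed. First, $\pgw^{\circ}$ is defined by integrating over an open stratum of a moduli stack, which is not proper, so $\chi$ of it is not defined; the paper avoids ever introducing such an object. Second, and more fundamentally, a naive stratification cannot produce the correct signs. The intermediate components over the node form a chain of length $r$, and in cohomology a chain-of-bubbles sum gives $\sum_r\agp^r=(\id-\agp)^{-1}$ with no signs; the appearance of $(\id+\agp)^{-1}=\sum_r(-1)^r\agp^r$ is a purely $K$-theoretic phenomenon. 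The paper supplies the missing ingredient explicitly: the union $\bigcup_{\s\in\O^0}\AMM_A(\s)$ of boundary divisors in the Artin stack of prestable-curves-with-degrees is a normal crossing divisor, whose structure sheaf is computed by inclusion--exclusion (Lemma \ref{applem} together with equation \eqref{appeq3}), $[\OO_{\bigcup_{\s}\AMM_A(\s)}]=\sum_{r\geqslant 0}(-1)^r\sum_{\s\in\O^r}[\OO_{\AMM_A(\s)}]$. This is what produces the $(-1)^r$ weight on the $r$-chain stratum; combined with Behrend-style ``cutting edges'' compatibility of obstruction theories (equation \eqref{appeq6}) it yields exactly $\sum_r(-1)^r\agp^r$. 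Without this inclusion--exclusion step your outline would at best deliver the wrong operator. A secondary point: you also need the analogue of Lemma \ref{applem} itself, i.e.\ a comparison of the line bundle of the boundary divisor on $\AMM_A$ with the pullback of $\OO_{\AMMo}(\DDD)$, and the vanishing of the contributions from modular graphs whose tails are attached to the wrong end vertex (the step from \eqref{appeq4} to \eqref{appeq5}); these are the precise mechanisms replacing the heuristic ``let the marked fibers specialize to the right component.''
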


The rest of the appendix is devoted to the proof of Proposition \ref{appmain}. While it should not be difficult to prove the result using the machineries developed by Li \cite{Li1,Li2}, we take a more direct approach, namely we work with the moduli of stable maps to the degeneration family instead of working with the stack of expanded degenerations and its associated moduli of relative stable maps.

Define 
\[C:=\{ (t,[x:y:z])\in\AA\times\PP^2|~xy=tz^2\}.\]
Then the projection onto $\AA$ defines a flat family $C\ra\AA$. Denote by $C_t$ the fiber over $t\in\AA$. If $t\ne 0 $, $C_t$ is isomorphic to $\PP^1$, and if $t=0$, it is isomorphic to the nodal rational curve $C_1\cup C_2$ where $C_1, C_2\simeq \PP^1$. In \cite{me}, we constructed a locally trivial $G/P$-fibration 
\[p:\dfib\ra C\]
such that 
\[\dfib|_{C_{t\ne 0}}\simeq \fib{\mu_1+\mu_2}~\text{ and }~\dfib|_{C_i}\simeq \fib{\mu_i}.\]
By the construction, $\dfib$ admits a $T$-action such that $p$ is $T$-equivariant where $T$ acts $C$ trivially. Moreover, the above isomorphisms are $T$-equivariant. Let $\rho\in (\Q/\Q_P)^*$. Like $\fib{\mu}$, there is a line bundle on $\dfib$ which restricts to $L_{\rho} =G\times_P \CC_{-\rho} $ on each fiber of $p$. By abuse of notation, we denote this line bundle by $\LL_{\rho}$.

Fix some sections $s_1^{(1)},\ldots,s_{k_1}^{(1)}, s_1^{(2)},\ldots,s_{k_2}^{(2)}$ of the family $C\ra \AA$ such that $s_1^{(i)}(0),\ldots,s_{k_i}^{(i)}(0)$ lie in $C_i\subset C_{t=0}$ away from the intersection $C_1\cap C_2$. For each $s=1,\ldots, k_1$, the divisor $D_{s}^{(1)}:=\dfib\times_{(p,s_s^{(1)})}\AA$ of $\dfib$ is canonically isomorphic to $\AA\times G/P$. Denote by $\iota_s^{(1)}:D_s^{(1)}\hookrightarrow \dfib$ the inclusion and $j_s^{(1)}:D_s^{(1)}\ra G/P$ the projection. For each $t=1,\ldots, k_2$, we define $D_t^{(2)}$, $\iota_t^{(2)}$ and $j_t^{(2)}$ in a similar way. 

From now on, fix $\b\in\pi_2(G/P)$. We have to show 
\begin{equation}\label{appeq1}
\pgw^{\b}_{\mu_1+\mu_2}(\vec{\g}^{(1)},\vec{\g}^{(2)})=\sum_{r=0}^{\infty}\sum_{i,j\in I}\sum_{\vec{\abc}}(-1)^r g^{ij}\pgw^{\abc_1}_{\mu_1}(\vec{\g}^{(1)}, I^{\abc_{11},\ldots, \abc_{1r}}(e_i)) \pgw^{\abc_2}_{\mu_2}(\vec{\g}^{(2)},e_j).
\end{equation}
Here,
\begin{itemize}
\item the third sum runs over all tuples $\vec{\abc}=(\abc_1;\abc_{11},\ldots,\abc_{1r};\abc_2)$ of elements of $\pi_2(G/P)$ such that $\abc_{1i}\ne 0$ for all $i=1,\ldots, r$ and $\abc_1+\sum_{i=1}^r\abc_{1i}+\abc_2=\b$; and

\item $I^{\abc_{11},\ldots, \abc_{1r}}$ is a linear operator on $K_T(G/P)$ defined by
\[ I^{\abc_{11},\ldots, \abc_{1r}} := (\ev_2^{\abc_{11}})_*\circ (\ev_1^{\abc_{11}})^* \circ \cdots\circ  (\ev_2^{\abc_{1r}})_*\circ (\ev_1^{\abc_{1r}})^* \] 
where $\ev_1^{\b'}, \ev_2^{\b'}: \MM_{0,2}(G/P,\b')\ra G/P$ are the evaluation maps. 
\end{itemize}

Denote by $A$ the semigroup of effective curve classes in $\dfib$. Let $\ac\in A$ and $k\in\NN$. Consider the Artin stack $\AMM_{0,k,A,\ac}$ defined in \cite{Costello}. Roughly speaking, it parametrizes genus zero $k$-pointed prestable curves each of whose irreducible components is assigned an element of $A$ such that the sum of these elements is equal to $\ac$ and this assignment satisfies a stability condition. There is a natural morphism $\MM_{0,k}(\dfib,\ac)\ra \AMM_{0,k,A,\ac}$ defined by forgetting the target but remembering the degrees of the restrictions of any stable maps to the irreducible components of the domain curves. It is shown in \textit{loc. cit.} that $\AMM_{0,k,A,\ac}$ is \'etale over the usual Artin stack $\AMM_{0,k}$ of prestable curves. It follows that $\AMM_{0,k,A,\ac}$ is smooth and the standard virtual structure sheaf $\OO^{vir}_{\MM_{0,k}(\dfib,\ac)}$ is equal to the one constructed using the relative perfect obstruction theory associated to the above morphism.

\begin{definition}\label{appdef3}$~$
\begin{enumerate}
\item Define a Deligne-Mumford stack
\[\MM:=\bigcup_{\widetilde{\b}}\MM_{0,k_1+k_2}(\dfib,\widetilde{\b}) \]
where $\widetilde{\b}$ runs over all classes in $\pi_2(\dfib)$ satisfying
\begin{enumerate}[(i)]
\item $p_*\widetilde{\b}=[C_1]+[C_2]$, the fiber class of the flat family $C\ra \AA$; and

\item $\langle\widetilde{\b},c_1(\LL_{\rho})\rangle = \langle\b,c_1(L_{\rho})\rangle$ for any $\rho\in(\Q/\Q_P)^*$. 
\end{enumerate}

\item Define an Artin stack
\[\AMM_A:=\bigcup_{\widetilde{\b}}\AMM_{0,k_1+k_2,A,\widetilde{\b}}\]
where $\widetilde{\b}$ runs over the same set in (1) above.
\end{enumerate}
\end{definition}

There is a proper morphism $\pi:\MM\ra\AA$ sending each stable map $u$ to the unique $t\in\AA$ such that $p\circ u$ lands in $C_t$. There is also a natural morphism $\nu:\MM\ra\AMM_A$ which is the union of the morphisms $\MM_{0,k_1+k_2}(\dfib,\widetilde{\b})\ra\AMM_{0,k_1+k_2,A,\widetilde{\b}}$ mentioned above. 

\begin{definition}\label{appdef4} Let $r$ be a non-negative integer.
\begin{enumerate}
\item Define $\O^r$ to be the set of modular graphs with $A$-structure \cite{BManin} satisfying
\begin{enumerate}[(a)]
\item the number of vertices is $r+2$;

\item the genus associated to every vertex is zero;

\item the underlying graph without tails is a linear graph;

\item the degrees associated to the two end vertices of the graph in (c) are classes which are projected via $p$ to $[C_1]$ and $[C_2]$ respectively;

\item the sum of degrees satisfies conditions (i) and (ii) in the definition of $\MM$ above; and

\item the number of tails is $k_1+k_2$. 
\end{enumerate}

\item Define $\widetilde{\O}^r$ to be the set of modular graphs with $A$-structure satisfying (a) to (e) above and that the number of tails is zero.
\end{enumerate}
\end{definition}

It follows from (d) and (e) that for any $\s\in \O^r$ or $\widetilde{\O}^r$, the degrees associated to the intermediate vertices are fiber classes with respect to the fibration $p:\dfib\ra C$. 

Define an injective map $\widetilde{\O}^r\hookrightarrow \O^r: \s\mapsto \ol{\s}$ as follows. For each $\s\in\widetilde{\O}^r$, define $\ol{\s}$ to be the modular graph obtained from $\s$ by attaching the first $k_1$ tails to the end vertex corresponding to $C_1$ and the rest to the other end vertex. See (d) in the definition of $\O^r$. We will identify $\widetilde{\O}^r$ with the image of this injective map. 

For any $\s\in\O^r$, we have a similarly-defined Deligne-Mumford stack $\MM(\s)$ (resp. Artin stack $\AMM_A(\s)$) parametrizing stable maps (resp. prestable curves) which are at least singular as described by the graph $\s$. If $\s\in \O^0$, then $\AMM_A(\s)$ is a Weil divisor of $\AMM_A$. Since $\AMM_A$ is smooth, we have the associated Cartier divisor $\OO_{\AMM_A}(\AMM_A(\s))$ (for simplicity, the section is hidden from the notation). For each $\ell\in\NN$, define $\AMM^{\leqslant\ell}_{0,0}$ to be the moduli stack of genus zero prestable curves with at most $\ell$ nodes. We have a smooth divisor $\DDD$ of $\AMMo$ which is the complement of the open substack $\AMM^{\leqslant 0}_{0,0}$. 

\begin{lemma} \label{applem} 
There exists a commutative diagram of stacks
\begin{equation}\label{appdiag1}
\begin{tikzcd} 
\MM  \arrow{d}[left]{\pi} \arrow{r}{\nu} &[1.0em] \AMM_A \arrow{d}[right]{g}   \\
\AA \arrow{r}{f}  &[1.0em]  \AMMo
\end{tikzcd}
\end{equation}
such that $f^*\OO_{\AMMo}(\DDD)=\OO_{\AA}(\mathbf{0})$ and $g^*\OO_{\AMMo}(\DDD)=\bigotimes_{\s\in\O^0}\OO_{\AMM_A}(\AMM_A(\s))$ as Cartier divisors.
\end{lemma}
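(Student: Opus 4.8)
The plan is to realise $f$ and $g$ as classifying morphisms of explicit flat families of genus-zero prestable curves with at most one node, and then to read off the two Cartier-divisor identities from the local model of a smoothing node. The projection $C\to\AA$ is flat and proper, with fibre $\cong\PP^1$ over $t\neq 0$ and fibre the one-nodal curve $C_1\cup C_2$ over $0$, so every fibre is a connected genus-zero prestable curve with at most one node and the family is classified by a morphism $f:\AA\to\AMMo$; moreover $f^{-1}(\DDD)$ is set-theoretically $\mathbf 0$. Since $C=\{xy=tz^2\}$ is a smooth surface and, near the node of $C_0$, the family $C\to\AA$ is the local model $xy=t$, i.e.\ the restriction along $f$ of the versal one-parameter smoothing of a node whose reduced discriminant cuts out $\DDD$, the smoothing parameters match and $f^{*}\OO_{\AMMo}(\DDD)=\OO_{\AA}(\mathbf 0)$.

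For $g$, I would first record a rigidity of the $A$-structures appearing in $\AMM_A$: the $p_*$-images of the component classes are effective classes of $C$ summing to the fibre class $[C_1]+[C_2]$, which is nef with self-intersection zero, so each is orthogonal to it and therefore a non-negative combination of classes of curves contained in fibres of $C\to\AA$; since the only irreducible such curves are $C_1$, $C_2$ and the generic fibre, either a single component has $p_*$-class $[C_1]+[C_2]$ and all others $0$, or exactly one component has $p_*$-class $[C_1]$, exactly one has $p_*$-class $[C_2]$, and all others $0$. Then $g$ is defined by forgetting the $k_1+k_2$ markings and contracting all components of $p_*$-class $0$ --- performed in families over $\AMM_A$ by the standard (Behrend--Manin / Knudsen) contraction along a distinguished set of components of the universal $A$-decorated curve, which is flat and stable under base change --- and by the dichotomy the result is a genus-zero prestable curve with $0$ or $1$ node, giving $g:\AMM_A\to\AMMo$. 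In the second case the two distinguished components are joined in the (tree) dual graph by a unique path with interior components of $p_*$-class $0$; contracting them joins the two at a node, and cutting any edge of that path exhibits the curve in some $\AMM_A(\s)$ with $\s\in\O^0$, whereas in the first case the curve contracts to a smooth $\PP^1$. Hence $g^{-1}(\DDD)=\bigcup_{\s\in\O^0}\AMM_A(\s)$ set-theoretically.

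For commutativity: given a point of $\MM$, i.e.\ a stable map $u:\Sigma_u\to\dfib$, the composite $p\circ u:\Sigma_u\to C$ lands in the single fibre $C_{\pi(u)}$ (a proper curve maps to a point of the affine $\AA$) and coincides with the contraction $\Sigma_u\to\bar\Sigma_u$ defining $g$ followed by an isomorphism $\bar\Sigma_u\xrightarrow{\sim}C_{\pi(u)}$; carried out over $\MM$, this identifies the families classified by $g\circ\nu$ and by $f\circ\pi$, both equal to $\pi^{*}C\to\MM$, yielding the $2$-isomorphism making the square commute. For the remaining identity $g^{*}\OO_{\AMMo}(\DDD)=\bigotimes_{\s\in\O^0}\OO_{\AMM_A}(\AMM_A(\s))$, both sides are effective Cartier divisors ($\AMM_A$ and $\AMMo$ being smooth) with the same support, so one compares multiplicities at the generic point of each $\AMM_A(\s)$: such a point is a two-component curve $\Sigma_1\cup_n\Sigma_2$ with $n$ the separating node, $\AMM_A$ is locally smooth with $\AMM_A(\s)=\{s=0\}$ for the smoothing parameter $s$ of $n$, and the local model of $p\circ u$ near $n$ is $(uv=s)\to(xy=t)$ with $x=u,\ y=v$, hence $t=s$; so $g^{*}\DDD$ has multiplicity one along $\AMM_A(\s)$, and the claimed equality follows.

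The step I expect to be the main obstacle is making the contraction defining $g$ rigorous as a morphism of stacks --- producing the flat, base-change-compatible family of at-most-one-nodal genus-zero curves over $\AMM_A$ from the purely combinatorial $p_*$-degree data --- together with the local analysis of the smoothing parameters: $t=s$ at a generic point of $\AMM_A(\s)$ as above, and the multiplicative relation $t=s_1\cdots s_\ell$ at the deeper strata where the path between the two distinguished components has length $\ell>1$, which is what forces every multiplicity in the divisor identity to equal one. The other ingredients --- well-definedness of $f$, the $2$-isomorphism, and flatness and base change for the contraction --- are formal or standard properties of prestable curves.
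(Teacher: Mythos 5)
Your proposal follows the paper's overall architecture closely: $f$ is the classifying morphism of $C\to\AA$; $g$ is built by contracting components of zero $p_*$-degree and forgetting decorations; commutativity is checked by observing $p\circ u$ factors through the universal contraction; and the Cartier-divisor identities are reduced to set-theoretic equality plus a multiplicity-one computation. Two places where you add to or diverge from the paper's argument, and one place where your write-up has a genuine slippage, are worth noting.

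You fill in a detail the paper leaves implicit. The paper simply asserts ``it is easy to see that $g$ does land in $\AMMo$''; your nef-class argument on the surface $C$ (every effective summand of the fibre class $[C_1]+[C_2]$ is orthogonal to it, hence fibral, hence one of $0,[C_1],[C_2],[C_1]+[C_2]$) makes the required dichotomy explicit and is correct. Similarly, for $f^*\OO_{\AMMo}(\DDD)=\OO_{\AA}(\mathbf 0)$ you invoke the versal local model $xy=t$ of a smoothing node, whereas the paper uses the quotient-stack presentation $\AMMo\simeq[V/GL_3]$ from \cite{Artinonenode}; both are valid and your route is arguably more elementary.

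The one place that needs repair is the multiplicity computation for $g^*\OO_{\AMMo}(\DDD)$. You write ``the local model of $p\circ u$ near $n$ is $(uv=s)\to(xy=t)$ with $x=u,\ y=v$, hence $t=s$''. But $g$ is a morphism out of $\AMM_A$, a moduli of degree-decorated prestable curves; there is no stable map $u$ attached to a point of $\AMM_A$, so ``$p\circ u$'' lives on $\MM$ rather than on the stack where the multiplicity $m_\s$ is being measured. Pushing the local analysis on $\MM$ down through $\nu$ would require additional control of $\nu$ along $\AMM_A(\s)$. The clean way to close the gap is either (i) to note that at the generic point $\eta_\s$ no contraction occurs and the degree-forgetting map $\AMM_{0,0,A,\ac}\to\AMM_{0,0}$ is \'etale (this is exactly Costello's result already quoted in the text), so $g$ identifies the smoothing parameters locally near $\eta_\s$; or (ii) as the paper does, to test against the one-parameter family $h_\s:\AA\to\AMM_A$ classifying $C\to\AA$ with $C_1,C_2$ labelled by the degrees of $\s$: commutativity gives $h_\s^*g^*\OO_{\AMMo}(\DDD)=f^*\OO_{\AMMo}(\DDD)=\OO_{\AA}(\mathbf 0)$, while $h_\s^*\OO_{\AMM_A}(\AMM_A(\s'))$ is trivial for $\s'\ne\s$, forcing $m_\s=1$. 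Also, your closing worry about multiplicities $t=s_1\cdots s_\ell$ on deeper strata is misplaced: on a smooth stack, equality of Cartier divisors is detected at the codimension-one generic points, so only the $\AMM_A(\s)$ with $\s\in\O^0$ need be examined.
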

\begin{proof}
We postpone the proof until the end.
\end{proof}

Observe that $\bigcup_{\s\in\O^0}\AMM_A(\s)\subset\AMM_A$ is a normal crossing divisor and the intersection of these divisors $\AMM_A(\s)$ over any finite subset of $\O^0$ is of the form $\AMM_A(\s')$ for some modular graph $\s'$ with $A$-structure. It is easy to see that in our case these modular graphs are precisely the elements of $\O^r$, $r\in\NN$. Therefore, by the inclusion-exclusion principle \cite{Lee}, 
\begin{equation}\label{appeq3}
\left[\OO_{\bigcup_{\s\in\O^0}\AMM_A(\s)}\right] =\sum_{r=0}^{\infty}(-1)^r\sum_{\s\in\O^r}\left[\OO_{\AMM_A(\s)} \right].
\end{equation}
By Lemma \ref{applem}, \eqref{appeq3} and some standard arguments in virtual pullbacks \cite{Kvirtual}, we have
\begin{equation}\label{appeq4}
\text{LHS of }\eqref{appeq1} = \sum_{r=0}^{\infty}(-1)^r\sum_{\s\in\O^r}\chi_{\MM(\s)}\left(\OO^{vir}_{\MM(\s)}\otimes e_1\otimes e_2 \right)
\end{equation}
where $e_1:= \bigotimes_{s=1}^{k_1}\ev_s^*(\iota_s^{(1)})_*(j_s^{(1)})^*\g_s^{(1)}  $ and $e_2:= \bigotimes_{t=1}^{k_2}\ev_{k_1+t}^*(\iota_t^{(2)})_*(j_t^{(2)})^*\g_t^{(2)} $.

Let $\s\in\O^r$. Recall the subset $\widetilde{\O}^r\subseteq\O^r$ defined above. Suppose $\s\not\in\widetilde{\O}^r$. It is easy to see that one of the evaluation maps on $\MM(\s)$ factors through the open immersion $\dfib\setminus D_s^{(1)}\hookrightarrow \dfib$ or $\dfib\setminus D_t^{(2)}\hookrightarrow \dfib$ for some $s=1,\ldots,k_1$ or $t=1,\ldots,k_2$. It follows that the summand in \eqref{appeq4} is zero for $\s$. Therefore, we have
\begin{equation}\label{appeq5}
\text{RHS of }\eqref{appeq4} = \sum_{r=0}^{\infty}(-1)^r\sum_{\s\in\widetilde{\O}^r}\chi_{\MM(\ol{\s})}\left(\OO^{vir}_{\MM(\ol{\s})}\otimes e_1\otimes e_2 \right).
\end{equation}

Now let $\s\in\widetilde{\O}^r$. Denote the degrees of $\s$ by $\ac_1^{\s}, \ac_{11}^{\s},\ldots, \ac_{1r}^{\s}, \ac_2^{\s}$, in the order compatible with the linear graph, starting with the end vertex corresponding to $C_1$. Define $\ifib:=\dfib|_{C_i}$, $F:=\dfib|_{C_1\cap C_2}$ and $\iota_F^{(i)}:F\hookrightarrow \ifib$ to be the inclusion. For each $i=1,2$, define 
\[ \MM_i:= \MM_{0,k_i+1}(\ifib,\ac_i^{\s}) \times_{(\ev_{k_i+1},\iota_F^{(i)})} F\]
and $\ev_{(i)}:\MM_i\ra F$ to be the morphism induced by $\ev_{k_i+1}$. Then Proposition \ref{appmain} follows if we can show
\begin{align}
& \chi_{\MM(\ol{\s})}\left( \OO^{vir}_{\MM(\ol{\s})}\otimes e_1\otimes e_2\right) \nonumber \\
=~& \sum_{i,j\in I}g^{ij}\chi_{\MM_1}\left( \OO^{vir}_{\MM_1}\otimes e_1\otimes \ev_{(1)}^*\left(I^{\ac_{11}^{\s},\ldots,\ac_{1r}^{\s}}(e_i)\right)\right)\chi_{\MM_2}\left(\OO^{vir}_{\MM_2}\otimes e_2\otimes \ev_{(2)}^*e_j\right). \label{appeq6}
\end{align}

\noindent The last equation follows from a similar argument which is used to prove the ``cutting edges'' axiom in \cite{B}. More precisely, we prove
\begin{lemma}
There exists a Cartesian diagram
\begin{equation}\label{appdiag2}
\begin{tikzcd} 
\MM(\ol{\s}) \arrow{d} \arrow{r} &[1.0em] \MM_1\times\displaystyle\left(\prod_{i=1}^r\MM_{0,2}(F,\ac^{\s}_{1i})\right)\times\MM_2 \arrow{d}  \\
 F^{r+1} \arrow{r}{\Delta_F^{r+1}}  &[1.0em]  F^{2r+2}
\end{tikzcd}
\end{equation}
such that the obstruction theories on the two stacks in the upper row are compatible over $\Delta_F^{r+1}$. 
\end{lemma}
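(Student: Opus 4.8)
The statement is a \emph{gluing}/\emph{splitting} result for the degeneration family $\dfib\to C$, and I would prove it in two stages: first exhibit the Cartesian square by writing down mutually inverse \emph{cutting} and \emph{gluing} morphisms, and then deduce the compatibility of the relative obstruction theories from the normalization exact sequence of the universal curve, exactly as in the proof of the cutting-edges axiom in \cite{B}, but with attention paid to the fact that here the target \emph{varies} ($\dfib|_{C_1}$, $F$, $\dfib|_{C_2}$) along the chain $\ol{\s}$.

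\textbf{The Cartesian square.} Given a family $(u\colon\Sigma\to\dfib)$ parametrized by $\MM(\ol{\s})$, its dual graph dominates $\ol{\s}$, so the universal domain has $r+1$ distinguished node-sections $n_1,\dots,n_{r+1}$ cutting it into families $\Sigma=\Sigma^{(1)}\cup\Sigma_1\cup\dots\cup\Sigma_r\cup\Sigma^{(2)}$, where $\Sigma^{(1)}$ (resp.\ $\Sigma^{(2)}$) carries the first $k_1$ (resp.\ last $k_2$) markings and each piece acquires one or two new markings at the $n_j$. Since the degree of $u$ on $\Sigma^{(1)}$ is $\ac_1^{\s}$ with $p_*\ac_1^{\s}=[C_1]$, the restriction $u|_{\Sigma^{(1)}}$ lands in $p^{-1}(C_1)=\dfib|_{C_1}$ and is a stable map of class $\ac_1^{\s}$ there with the new marking mapping into $F=\dfib|_{C_1\cap C_2}$; likewise for $\Sigma^{(2)}$; and each intermediate $u|_{\Sigma_i}$ has fiber-class degree $\ac_{1i}^{\s}$ and, being squeezed between a component over $C_1$ and one over $C_2$, maps into the single fiber $F$. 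Restricting $u$ and recording the new markings thus defines the cutting morphism $\MM(\ol{\s})\to\MM_1\times\bigl(\prod_i\MM_{0,2}(F,\ac_{1i}^{\s})\bigr)\times\MM_2$, and evaluating at $n_1,\dots,n_{r+1}$ lands in $F^{r+1}\subset F^{2r+2}$ on the image of $\Delta_F^{r+1}$. Conversely, a tuple whose node-evaluations agree under $\Delta_F^{r+1}$ carries gluing data: gluing the domains at the matched markings and the maps (which agree there, all values lying in $F\subset\dfib$) produces a stable map to $\dfib$ whose dual graph dominates $\ol{\s}$. The routine checks — that restricted maps are stable of the asserted classes, that gluing of stable pieces is stable, and that cutting and gluing are inverse in families — identify $\MM(\ol{\s})$ with the asserted fiber product; the rewriting of the iterated fiber product over $F$ as the single fiber product over the chain-diagonal $\Delta_F^{r+1}\colon F^{r+1}\hookrightarrow F^{2r+2}$ is purely formal.

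\textbf{Compatibility of obstruction theories.} All moduli in sight carry relative perfect obstruction theories over the relevant smooth Artin stacks $\AMM_{0,\bullet,A,\bullet}$, represented by $R\pi_*$ of the pullback of the tangent complex of the target. Writing $\mathbf f_v\colon\Sigma_v\to\dfib$ for the pieces of the universal map, I would tensor the normalization sequence
\[0\to\OO_{\Sigma}\to\bigoplus_v\OO_{\Sigma_v}\to\bigoplus_{j=1}^{r+1}\OO_{n_j}\to0\]
with $\mathbf f^*T_{\dfib}$ and push forward, obtaining a distinguished triangle
\[R\pi_*\,\mathbf f^*T_{\dfib}\;\longrightarrow\;\bigoplus_v R\pi_{v*}\,\mathbf f_v^*T_{\dfib}\;\longrightarrow\;\bigoplus_{j=1}^{r+1}\ev_{n_j}^*\bigl(T_{\dfib}|_F\bigr)\;\xrightarrow{\,+1\,}.\]
It then remains to identify each $R\pi_{v*}\mathbf f_v^*T_{\dfib}$ with the virtual tangent complex of the corresponding factor and to match the last term with the conormal of $\Delta_F^{r+1}$, i.e.\ with $\bigoplus_j\ev_{n_j}^*T_F$. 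For the end pieces one uses $N_{\dfib|_{C_i}/\dfib}=p^*N_{C_i/C}$ together with $N_{C_i/C}\cong\OO_{\PP^1}(-1)$ (as $C_i\cdot C_i=-1$ in the smooth surface $C$), so that $\mathbf f_{\Sigma^{(i)}}^*N_{\dfib|_{C_i}/\dfib}$ has vanishing Euler characteristic along the degree-one-over-$C_i$ curve $\Sigma^{(i)}$ and the passage $T_{\dfib}\rightsquigarrow T_{\dfib|_{C_i}}$ costs nothing, after which the twist of $\MM_i$ by the regular embedding $F\hookrightarrow\dfib|_{C_i}$ contributes precisely $-\ev_{n}^*N_{F/\dfib|_{C_i}}$. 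For the intermediate pieces one uses $0\to T_F\to T_{\dfib}|_F\to N_{F/\dfib}\to0$, the term $R\pi_{v*}\mathbf f_v^*N_{F/\dfib}$ recording the deformation of the base point of the fiber, which is exactly what gets frozen by forcing the intermediate components onto $C_1\cap C_2$. Collecting these, the discrepancy between $R\pi_*\mathbf f^*T_{\dfib}$ and the box sum of the pieces' virtual tangent complexes is exactly $\bigoplus_j\ev_{n_j}^*T_F$; dually, this is the conormal of $\Delta_F^{r+1}$, which is the meaning of ``compatible over $\Delta_F^{r+1}$'' (a morphism of distinguished triangles relating the two obstruction theories and the conormal of the regular embedding $\Delta_F^{r+1}$), precisely the input needed for the virtual-pullback argument that yields \eqref{appeq6}.

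\textbf{Main obstacle.} The curve combinatorics of the first stage is routine; the technical heart is the obstruction-theoretic bookkeeping of the second stage. Because the central fiber $C_0=C_1\cup C_2$ is nodal, the end components of a map in $\MM(\ol{\s})$ land in the \emph{divisors} $\dfib|_{C_i}\subset\dfib$ and the intermediate ones in the \emph{codimension-two} locus $F$, so one must verify that the contributions of $N_{\dfib|_{C_i}/\dfib}$ (which vanish after pushforward, by negativity along $C_i$) and of $N_{F/\dfib}$ (which are cancelled by the frozen-base-point constraint) combine to leave \emph{only} the conormal of $\Delta_F^{r+1}$. Carrying out this cancellation correctly — rather than any conceptual difficulty — is where the care is needed.
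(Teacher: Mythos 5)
Your approach matches the paper's in spirit: the Cartesian square is obtained by cutting at the distinguished nodes and observing that the end pieces land in $\afib$, $\bfib$ and the intermediate pieces in $F$ (the paper also invokes the theorem of cohomology and base change to upgrade ``lands in scheme-theoretically over each $\CC$-point'' to a statement over the whole moduli); and the obstruction-theory compatibility follows from a normalization-type sequence on the universal curve. The difference is in how the cancellation of the normal-bundle contributions is organized, and here the paper's device is cleaner and worth noting. Instead of tensoring the normalization sequence with $u^*T_{\dfib}$ and then trying to pare down each summand $R\pi_{v*}u_v^*T_{\dfib}$ to the corresponding virtual tangent complex while matching against the node skyscrapers, the paper introduces a single coherent sheaf $\mathcal{E}$ on the whole universal curve $\CCC$: the subsheaf of $u^*T_{\dfib}$ consisting of sections tangent to $\ifib$ along $\widetilde{\CCC}_i$, where $\widetilde{\CCC}_i$ is $\CCC$ with the opposite end component $\CCC_{3-i}$ removed. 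On the intermediate components the two tangency conditions intersect to $T_F$, so $\mathcal{E}$ already knows about the codimension-two constraint there. The quotient $u^*T_{\dfib}/\mathcal{E}$ is $\bigoplus_i(\eta_i)_*(p\circ u\circ\eta_i)^*\OO_{C_i}(-1)$, a line bundle of total degree $-1$ on each tree of rational curves $\widetilde{\CCC}_i$ (degree $-1$ on the end component, $0$ on the intermediate ones), so its derived pushforward vanishes outright. This one vanishing statement absorbs in a single stroke both the end-piece normal bundle vanishing you observe and the intermediate $N_{F/\dfib}$ contributions you need to cancel; there is then a second short exact sequence relating $\mathcal{E}$ to $\eta_*(\FF_1\oplus\bigoplus_i u_{1i}^*T_F\oplus\FF_2)$ with skyscraper cokernel $\bigoplus_j(z_j)_*(u\circ z_j)^*T_F$, which feeds directly into Behrend's argument.

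The place where your write-up genuinely leaves something open is precisely the step you flag as requiring care. Your degree count only kills $R\pi_*$ of $u^*N_{\ifib/\dfib}$ \emph{on the end piece} $\Sigma^{(i)}$; each intermediate component then still carries a nontrivial $R\pi_{v*}u_{1i}^*N_{F/\dfib}$ (concentrated in degree $0$, rank $2$), and the skyscraper term at each node contributes an $\ev_{n_j}^*N_{F/\dfib}$. You assert that these ``collect'' to leave only $\bigoplus_j\ev_{n_j}^*T_F$, but this is not a rank count (note there are $r+1$ nodes but only $r$ intermediate components; the missing $\CC^2$ comes from the two end-piece twists $\ev_{x_i}^*N_{F/\ifib}$), and exhibiting the required morphism of distinguished triangles from this data is a nontrivial diagram chase. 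The heuristic ``frozen base point'' is the right intuition but is not a proof. The paper's $\mathcal{E}$ is exactly the device that makes this assembly automatic, because the degree-$(-1)$ vanishing is computed on $\widetilde{\CCC}_i$, which already includes the intermediate components.
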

\begin{proof}
Denote by $\CCC_1, \CCC_{11},\ldots,\CCC_{1r},\CCC_2$ the universal curves of $\MM(\ol{\s})$ corresponding to different vertices of $\ol{\s}$, and by $u_1,u_{11},\ldots, u_{1r},u_2$ the universal stable maps. Observe that over any $\CC$-point of $\MM(\ol{\s})$, $u_1$, $u_{1i}$ and $u_2$ factor through scheme-theoretically the inclusions of $\afib$, $F$ and $\bfib$ into $\dfib$ respectively. Using the fact that $C_1$ and $C_2$ are $(-1)$-curves in $C$ and the theorem of cohomology and base change, these stable maps (over $\MM(\ol{\s})$) in fact land in these subschemes. This defines the horizontal arrow in \eqref{appdiag2}. The vertical arrows are defined to be the evaluation maps at those marked points which do not correspond to any of the insertions $\vec{\g}^{(1)}$ and $\vec{\g}^{(2)}$. It is straightforward to verify that \eqref{appdiag2} is Cartesian.

It remains to verify the compatibility of obstruction theories. Let $\CCC$ denote the universal curve of $\MM(\ol{\s})$ obtained by gluing $\CCC_1,\CCC_{11},\ldots,\CCC_{1r},\CCC_2$ according to the combinatorics of $\ol{\s}$. Let $u:\CCC\ra \dfib$ be the universal stable map, $\eta:\CCC_1\amalg\cdots\amalg\CCC_2\ra \CCC$ the gluing map, and $z_0,\ldots,z_r:\MM(\ol{\s})\ra \CCC$ the sections ``lying between'' $\CCC_1$ and $\CCC_{11}$, $\CCC_{11}$ and $\CCC_{12}$, etc. For each $i=1,2$, we remove the component $\CCC_{3-i}$ from $\CCC$, that is, we glue all $\CCC_1,\CCC_{11},\ldots,\CCC_{1r},\CCC_2$ except $\CCC_{3-i}$. Denote by $\widetilde{\CCC}_i$ the resulting curve and by $\eta_i:\widetilde{\CCC}_i\hookrightarrow\CCC$ the inclusion. 

Define $\FF_i$ to be the kernel of the natural epimorphism 
\[ u_i^*T_{\ifib}\ra (x_i)_*(u_i\circ x_i)^*N_{F/P_{\mu_i}}\ra 0\]
where $x_1:\MM(\ol{\s})\ra \CCC_1$ (resp. $x_2:\MM(\ol{\s})\ra \CCC_2$) is the section which is used to glue $\CCC_1$ and $\CCC_{11}$ (resp. $\CCC_{1r}$ and $\CCC_2$). (Recall $u_i$ lands in $\ifib$.) Notice that $\FF_i$ is the sheaf giving rise to the deformation and obstruction spaces for $\MM_i$ relative to $\AMM_A(\ol{\s})$. It is straightforward to verify that there exists a coherent sheaf $\mathcal{E}$ on $\CCC$ which fits into the following two short exact sequences simultaneously:
\begin{equation}\label{appseq1}
0\ra \mathcal{E}\ra \eta_*\left(\FF_1\oplus \bigoplus_{i=1}^r u_{1i}^*T_F \oplus\FF_2\right) \ra \bigoplus_{j=0}^r(z_j)_*(u\circ z_j)^*T_F\ra 0 
\end{equation}
\begin{equation}\label{appseq2}
0\ra \mathcal{E}\ra u^*T_{\dfib} \ra \bigoplus_{i=1}^2 (\eta_i)_*(u\circ \eta_i)^* N_{\ifib/\dfib }\ra 0.
\end{equation}

Since $(u\circ \eta_i)^* N_{\ifib/\dfib }\simeq (p\circ u\circ\eta_i)^*\OO_{C_i}(-1)$, the derived pushforward functor sends the cokernel in \eqref{appseq2} to the zero object in the derived category $D(\MM(\ol{\s}))$ and so defines an isomorphism between the objects corresponding to the other two coherent sheaves in the same short exact sequence. Using this isomorphism and applying the argument in \cite{B} to \eqref{appseq1}, we obtain a homomorphism of distinguished triangles
\begin{equation}\nonumber
\begin{tikzcd} 
  h^*E_{\MM_{right}}\arrow{d} \arrow{r} &[.5em] E_{\MM_{left}} \arrow{d} \arrow{r} &[.5em] v^*L_{\Delta^{r+1}_F}\arrow{d} \arrow{r} &[.5em]  h^*E_{\MM_{right}}[1] \arrow{d} \\
 h^*L_{\MM_{right}/\AMM_A(\ol{\s})}\arrow{r} &[.5em] L_{\MM_{left}/\AMM_A(\ol{\s})} \arrow{r} &[.5em] L_{\MM_{left}/\MM_{right}}\arrow{r} &[.5em]  h^*L_{\MM_{right}/\AMM_A(\ol{\s})}[1] 
\end{tikzcd}
\end{equation}
where $\MM_{left}$ (resp. $\MM_{right}$) is the stack at the top left (resp. right) corner in diagram \eqref{appdiag2}, and $h:\MM_{left}\ra\MM_{right}$ and $v:\MM_{left}\ra F^{r+1}$ are the morphisms in the same diagram. This is the compatibility we need to prove.
\end{proof}

Equation \eqref{appeq6} now follows from the functoriality of virtual structure sheaves \cite{Lee}. The proof is Proposition \ref{appmain} is complete.

\vspace{.2cm}
\begin{myproof}{Lemma}{\ref{applem}} By making use of the ``forgetting tails'' morphisms for $\MM$ and $\AMM_A$, we may assume $k_1=k_2=0$. Notice that the latter morphism exists by \cite[Proposition 2.1.1]{Costello}. Define $f$ to be the morphism representing the flat family $C\ra \AA$ defined at the beginning.

We define $g$ as follows. Denote by $B$ the semigroup of effective curve classes in $C$. The projection $p:\dfib\ra C$ induces a homomorphism $\phi:A\ra B$ of semigroups. There is a morphism $\AMM_A\ra \AMM_{0,0,B,[C_1]+[C_2]}$ sending each curve $S$ over $\spec\CC$ to the curve obtained by replacing the degree $a\in A$ assigned to each irreducible component with $\phi(a)\in B$ and then stabilizing the resulting curve in the sense of \cite{Costello}, i.e. contracting those irreducible components labelled by $0\in B$ and having less than three special points. Although the construction of this morphism is standard, we provide the details in the next paragraph.

Let $T$ be a scheme and $S\ra T$ a family of prestable curves representing a $T$-point of $\AMM_{0,0,A,a}$. By definition, we have a constructible function $\deg:S\ra A$ which records the degree assigned to each irreducible component of the fiber over any $\CC$-point of $T$. We have to stabilize the family $S\ra T$ with respect to the degree function $\phi\circ\deg :S\ra B$. As in the usual case, this type of stabilization is universal so that we can construct it \'etale locally. \'Etale locally, we can make $S$ stable in the usual sense by inserting sufficiently many sections. Then we remove these sections one by one. Each time we remove a section, we stabilize the family by contracting those components labelled by $0\in B$ and having two special points. Such a stabilization does exist, by \cite[Proposition 2.1.1]{Costello} which says that $\CCC_{0,N,B,b}\simeq \AMM_{0,N+1,B,b}$ where $\CCC_{0,N,B,b}$ is the universal curve of $\AMM_{0,N,B,b}$. The family obtained after removing all these sections will be the desired stabilization. 

Our $g$ is then defined to be the composite
\[ \AMM_A \ra \AMM_{0,0,B,[C_1]+[C_2]}\ra \AMMo\]
where the first morphism is the above morphism and the second is defined by forgetting all the degrees without stabilizing. It is easy to see that $g$ does land in $\AMMo$.

\vspace{.2cm}
It remains to verify the stated properties. 

\vspace{.2cm}
\noindent \textit{The commutativity of diagram \eqref{appdiag1}.}

Let $T$ be a scheme and $u:S\ra \dfib$ a stable map over $T$ representing a $T$-point of $\MM$. By \cite[Lemma 2.2]{BManin}, $u$ lies over a unique morphism $v:T\ra \AA$. By definition, $f\circ\pi$ sends this $T$-point to the $T$-point of $\AMMo$ represented by the flat family $C\times_{\AA} T$. Consider the morphism $w:S\ra C\times_{\AA} T$ induced by $p\circ u$. Observe that, over every $\CC$-point of $T$, $w$ contracts precisely those irreducible components which are mapped into the fibers of $p$. It follows that $C\times_{\AA} T$ is the stabilization of $p\circ u$ which is equal to the $T$-point with respect to the composite $g\circ\nu$. 

\vspace{.4cm}
\noindent \textit{The equality $f^*\OO_{\AMMo}(\DDD)=\OO_{\AA}(\mathbf{0})$.}

This is straightforward. For example, one can make use of the quotient stack presentation of $\AMMo$ given in \cite{Artinonenode}. More precisely, $\AMMo$ is isomorphic to $[V/GL_3]$ where $V\subset \sym^2\CC^3$ is the space of symmetric 2-tensors on $\CC^3$ with rank at least two, and the divisor $\DDD$ corresponds to the space of those tensors whose rank is precisely two.

\vspace{.4cm}
\noindent \textit{The equality $g^*\OO_{\AMMo}(\DDD)=\bigotimes_{\s\in\O^0}\OO_{\AMM_A}(\AMM_A(\s))$.}

Observe that, in a smooth atlas, the equality $g^{-1}(\DDD)=\bigcup_{\s\in\O^0}\AMM_A(\s)$ holds set-theoretically. It follows that $g^*\OO_{\AMMo}(\DDD)=\bigotimes_{\s\in\O^0}\OO_{\AMM_A}(\AMM_A(\s))^{\otimes m_{\s}}$ as Cartier divisors for some $m_{\s}\in\ZZ_{\geqslant 1}$. To determine $m_{\s}$, consider the morphism $h_{\s}:\AA\ra \AMM_A$ which represents the flat family $C\ra\AA$ where $C_1$ and $C_2$ are labelled by the degrees given by $\s$. By the previous paragraph, we have $h_{\s}^*g^*\OO_{\AMMo}(\DDD)=\OO_{\AA}(\mathbf{0})$. On the other hand, it is easy to see that for any $\s'\in\O^0$ not equal to $\s$, $h_{\s}^*\OO_{\AMM_A}(\AMM_A(\s'))$ is the trivial Cartier divisor. This gives $m_{\s}=1$. \end{myproof}

\end{document}


